\renewcommand\part{%
	\if@noskipsec \leavevmode \fi
	\par
	\addvspace{4ex}%
	\@afterindentfalse
	\secdef\@part\@spart}
\def\@part[#1]#2{%
	\ifnum \c@secnumdepth >\m@ne
	\refstepcounter{part}%
	\addcontentsline{toc}{part}{\thepart\hspace{1em}#1}%
	\else
	\addcontentsline{toc}{part}{#1}%
	\fi
	{\parindent \z@ \raggedright
		\interlinepenalty \@M
		\normalfont
		\ifnum \c@secnumdepth >\m@ne
		\Large\bfseries \partname\nobreakspace\thepart
		\par\nobreak
		\fi
		\huge \bfseries #2%
		%%%\markboth{}{}\par}% removing redefinition of headings
		\par}%
	\nobreak
	\vskip 3ex
	\@afterheading}
\def\@spart#1{%
	{\parindent \z@ \raggedright
		\interlinepenalty \@M
		\normalfont
		\huge \bfseries #1\par}%
	\nobreak
	\vskip 3ex
	\@afterheading}
\date{\today}
\theoremstyle{definition} \newtheorem{definition}{Definition}[section]
\theoremstyle{definition} \newtheorem{remark}[definition]{Remark}
\theoremstyle{plain} \newtheorem{lemma}[definition]{Lemma}
\theoremstyle{plain} \newtheorem{proposition}[definition]{Proposition}
\theoremstyle{plain} \newtheorem{theorem}[definition]{Theorem}
\theoremstyle{plain} \newtheorem{corollary}[definition]{Corollary}
\theoremstyle{definition} \newtheorem{example}[definition]{Example}
\theoremstyle{plain} 
\theoremstyle{definition} \newtheorem{assumption}[definition]{Assumption}
\DeclareMathOperator{\dive}{div}
\DeclareMathOperator{\supp}{supp}
\DeclareMathOperator{\clos}{clos}
\DeclareMathOperator{\Lip}{Lip}
\DeclareMathOperator{\Dom}{Dom}
\DeclareMathOperator{\Graph}{Graph}
\DeclareMathOperator{\diam}{diam}
\newcommand{\R}{\mathbb{R}}
\newcommand{\Q}{\mathbb{Q}}
\newcommand{\N}{\mathbb{N}}
\newcommand{\Z}{\mathbb{Z}}
\newcommand{\C}{\mathbb{C}}
\newcommand{\loc}{\text{\rm loc}}
\newcommand{\Id}{\mathrm{id}}
\newcommand{\ind}{1\!\!\mathrm{I}}
\newcommand{\adm}{\text{\textsf{Adm}}}
\newglossaryentry{adm}{name={\ensuremath{\mathrm{Adm(\mu)}}},description={$\mu$-admissible transference plans $\pi \in \mathcal P(\Gamma)$}}
\newglossaryentry{An}{name={\ensuremath{A(n)}},description={set of curves made of up to pieces of $n$-different curves}}
\newglossaryentry{Api}{name={\ensuremath{A_\pi}},description={set of curves where the test plan $\pi$ is concentrated}}
\newglossaryentry{barApi}{name={\ensuremath{\bar A_\pi}},description={set of curves where the test plan $\pi$ is concentrated generating the minimal equivalence relation}}
\newglossaryentry{Ball}{name={\ensuremath{B^X_r(x)}},description={ball of radius $r$ centered in $x \in X$}}
\newglossaryentry{BallSet}{name={\ensuremath{B^X_r(A)}},description={$r$-neighborhood of the set $A \subset X$}}
\newglossaryentry{Borelsig}{name={\ensuremath{\mathcal B(X)}},description={Borel $\sigma$-algebra of the topological space $X$}}
\newglossaryentry{calApi}{name={\ensuremath{\mathcal A(\pi)}},description={increasing sequences of compact sets so that $\pi$ is concentrated on theri union}}
\newglossaryentry{calA}{name={\ensuremath{\mathcal A}},description={graph of the function $\pi \mapsto \mathcal A(\pi)$}}
\newglossaryentry{calCn}{name={\ensuremath{\mathcal C(n)}},description={subset of $A(n)$ made of connected curves}}
\newglossaryentry{cardA}{name={\ensuremath{\sharp A}},description={cardinality of the set $A$}}
\newglossaryentry{Cconst}{name={\ensuremath{\mathrm{const.}}},description={generic constant}}
\newglossaryentry{Ccompr}{name={\ensuremath{C}},description={compressibility constant of a test plan $\pi$}}
\newglossaryentry{ddistance}{name={\ensuremath{d}},description={distance on the metric space $X$}}
\newglossaryentry{df}{name={\ensuremath{Df}},description={cotangent vector field corresponding to the function $f \in S^1$}}
\newglossaryentry{uppegr}{name={\ensuremath{|Df|}},description={upper gradient of a function $f$}}
\newglossaryentry{Hausd}{name={\ensuremath{d_H(K_1,K_2)}},description={Hausdorff distance on $\mathcal K(X)$}}
\newglossaryentry{Equiv}{name={\ensuremath{E}},description={equivalence relation}}
\newglossaryentry{equivcla}{name={\ensuremath{Ex}},description={equivalence class containing $x$}}
\newglossaryentry{EApi}{name={\ensuremath{E^{A_\pi}}},description={equivalence relation generated by the curves in $A_\pi$}}
\newglossaryentry{barE}{name={\ensuremath{\bar E}},description={equivalence relation generated by countably many saturated sets}}
\newglossaryentry{Epi}{name={\ensuremath{E^\pi}},description={minimal equivalence relation generated by $\pi$}}
\newglossaryentry{evaluation}{name={\ensuremath{e(t)}},description={evaluation map $\Gamma \ni \gamma \mapsto \gamma(t) \in X$}}
\newglossaryentry{quotinetmap}{name={\ensuremath{\mathtt e}},description={quotient map for the equivalence relation $E$}}
\newglossaryentry{genfunc}{name={\ensuremath{f}},description={generic function on $X$}}
\newglossaryentry{gamma}{name={\ensuremath{\gamma}},description={Lipschitz curves on $X$}}
\newglossaryentry{dotgamma}{name={\ensuremath{\dot{|\gamma|}}},description={metric derivative of $\gamma$}}
\newglossaryentry{Domf}{name={\ensuremath{\Dom(f)}},description={domain of the function $f$}}
\newglossaryentry{equivE}{name={\ensuremath{E}},description={equivalence relation}}
\newglossaryentry{Fsigma}{name={\ensuremath{F_\sigma}},description={countable union of closed sets}}
\newglossaryentry{Gdelta}{name={\ensuremath{G_\delta}},description={countable intersection of open sets}}
\newglossaryentry{Gamma}{name={\ensuremath{\Gamma}},description={space of Lipschitz curves on $X$}}
\newglossaryentry{Gammac}{name={\ensuremath{\Gamma_c}},description={space of closed Lipschitz curves on $X$}}
\newglossaryentry{GammaA}{name={\ensuremath{\Gamma_A}},description={set of all subcurves of a given family of curves}}
\newglossaryentry{GammaApiS}{name={\ensuremath{\Gamma_{\bar A_{\bar \pi},\bar S}}},description={subcurves of $\bar A_{\bar \pi}$ passing trhough $\bar S$}}
\newglossaryentry{geoxy}{name={\ensuremath{\protect\overrightarrow{[x,y]}}},description={geodesic connecting $x$ to $y$}}
\newglossaryentry{Graphf}{name={\ensuremath{\Graph(f)}},description={Graph of a function $f$}}
\newglossaryentry{Hasdmeas}{name={\ensuremath{\mathcal H^d}},description={$d$-dimensional Hausdorff measure}}
\newglossaryentry{InteCirc}{name={\ensuremath{\mathcal I_c}},description={set of closed path with integer circuitation}}
\newglossaryentry{K13}{name={\ensuremath{K_{1/3}}},description={$1/3$ Cantor set in $[0,1]$}}
\newglossaryentry{K(X)}{name={\ensuremath{\mathcal K(X)}},description={compact subsets of $X$, with the Haudorff topology}}
\newglossaryentry{Lp}{name={\ensuremath{L^p(\mu)}},description={standard integral space of functions}}
\newglossaryentry{Lengthgam}{name={\ensuremath{L(\gamma)}},description={length of the Lipschitz curve $\gamma$}}
\newglossaryentry{LipX}{name={\ensuremath{\mathrm{Lip}([0,1];X)}},description={Lipschitz functions from $[0,1]$ to $X$}}
\newglossaryentry{Lambda}{name={\ensuremath{\Lambda_\gamma(t,v)}},description={evaluation of the cotangent vector field $v$ on the curve $\gamma$}}
\newglossaryentry{L1T*X}{name={\ensuremath{L^1(T^*X)}},description={cotangent bundle of $L^1$ covectors}}
\newglossaryentry{Lebesmeasu}{name={\ensuremath{\mathcal L^d}},description={Lebesgue measure in $\R^d$}}
\newglossaryentry{m}{name={\ensuremath{m}},description={probability on $X$ used for defining $W^{1,2}$}}
\newglossaryentry{calM}{name={\ensuremath{\mathcal M}},description={(complete) $\sigma$-algebra, often identificated with the corresponding measure algebra}}
\newglossaryentry{calME}{name={\ensuremath{\mathcal M^{E}}},description={$\sigma$-algebra made of $E$-saturated sets, of the corresponding measure algebra}}
\newglossaryentry{calMpi}{name={\ensuremath{\mathcal M^\pi}},description={maximal measure algebra generated by $\pi$}}
\newglossaryentry{mumeasure}{name={\ensuremath{\mu}},description={an element of $\mathcal P(X)$}}
\newglossaryentry{muy}{name={\ensuremath{\mu_y}},description={conditional probabilities of the disintegration of $\mu$}}
\newglossaryentry{Natural}{name={\ensuremath{\N}},description={natural numbers}}
\newglossaryentry{omega}{name={\ensuremath{\omega}},description={$L^\infty(m)$-function defined on $\bar S$ with values in $\mathbb S^1$}}
\newglossaryentry{pi}{name={\ensuremath{\pi}},description={test plan}}
\newglossaryentry{barpi}{name={\ensuremath{\bar \pi}},description={test plan generating a maximal equivalence relation}}
\newglossaryentry{hatpi}{name={\ensuremath{\hat \pi}},description={test plan on constant curves with $e(t)_\sharp \hat \pi = 1$}}
\newglossaryentry{probX}{name={\ensuremath{\mathcal P(X)}},description={probability measure over $X$}}
\newglossaryentry{projX}{name={\ensuremath{\mathtt p_X}},description={projection on the space $X$}}
\newglossaryentry{proji}{name={\ensuremath{\mathtt p_i}},description={projection on the $i$-th component of the space $\prod_i X_i$}}
\newglossaryentry{pushf}{name={\ensuremath{f_\sharp \mu}},description={push forward of $\mu$ by $f$}}
\newglossaryentry{Rangef}{name={\ensuremath{f(X)}},description={range of the function $f$}}
\newglossaryentry{Rational}{name={\ensuremath{\Q}},description={rational numbers}}
\newglossaryentry{Real}{name={\ensuremath{\R}},description={real numbers}}
\newglossaryentry{Revmap}{name={\ensuremath{R(\gamma)(t) = \gamma(1-t)}},description={time reversal map}}
\newglossaryentry{Resst}{name={\ensuremath{R_{s,t}(\gamma)}},description={restriction of the curve $\gamma$ to the interval $[s,t]$}}
\newglossaryentry{analytic}{name={\ensuremath{\Sigma^1_1}},description={analytic sets, i.e. projection of closed sets}}
\newglossaryentry{sigmanal}{name={\ensuremath{\sigma(\Sigma^1_1)}},description={$\sigma$-algebra generated by analytic sets}}
\newglossaryentry{SectionE}{name={\ensuremath{S}},description={section of the equivalence relation $E$}}
\newglossaryentry{S1}{name={\ensuremath{\mathbb{S}^1}},description={unitary circle in $\R^2$}}
\newglossaryentry{S1presob}{name={\ensuremath{S^1(X)}},description={space of functions with upper gradient $|Df|$ in $L^1(X)$}}
\newglossaryentry{s(pi)}{name={\ensuremath{s(\pi) = \{K_n\}_n}},description={selection of compact sets in $\mathcal A$}}
\newglossaryentry{barS}{name={\ensuremath{\bar S}},description={Borel section of an equivalence relation}}
\newglossaryentry{Tmapst}{name={\ensuremath{T_{s,t}(\gamma)}},description={closed curve oscillating between $\gamma(s)$ and $\gamma(t)$}}
\newglossaryentry{LinfTX}{name={\ensuremath{L^\infty(TX)}},description={tangent bundle of $L^\infty$ vector fields}}
\newglossaryentry{vcvector}{name={\ensuremath{v}},description={element of the cotangent bundle $L^1(T^*X)$}}
\newglossaryentry{vitali}{name={\ensuremath{v:[0,1] \to [0,1]}},description={Vitali function or Devil staircase}}
\newglossaryentry{w}{name={\ensuremath{w}},description={solution of the equation $\dot w(\gamma) = i w(\gamma) \Lambda(\gamma)$}}
\newglossaryentry{W12}{name={\ensuremath{W^{1,2}(X)}},description={space of functions in $L^2(X)$ with upper gradient in $L^2(X)$}}
\newglossaryentry{Xmetric}{name={\ensuremath{X}},description={separable metric space}}
\newglossaryentry{xmetric}{name={\ensuremath{x}},description={generic point in the metrix space $X$}}
\newglossaryentry{Integer}{name={\ensuremath{\Z}},description={integer numbers}}
\newglossaryentry{varpi}{name={\ensuremath{\varpi}},description={image measure of the disintegration of $\mu$}}
\numberwithin{equation}{section} %numera le formule in accordo con le sezioni
\theoremstyle{plain} \newtheorem*{theorem*}{Theorem}
\theoremstyle{plain} 
\theoremstyle{plain} \newtheorem*{mthm*}{Main Theorem}
\theoremstyle{plain} \newtheorem*{conjecture*}{Conjecture}
\theoremstyle{plain} 
\theoremstyle{plain} \newtheorem*{problem*}{Problem}
\title{Exact integrability conditions for cotangent vector fields}
\subjclass[2020]{35Q40}
\author{Stefano Bianchini}
\address{S. Bianchini: S.I.S.S.A., via Bonomea 265, 34136 Trieste, Italy}
\email{bianchin@sissa.it}
\begin{document}

%\maketitle

\begin{abstract}
In Quantum Hydro-Dynamics the following problem is relevant: let $(\sqrt{\rho},\Lambda) \in H^1(\R^d) \times L^2(\R^d,\R^d)$ be a finite energy hydrodynamics state, i.e. $\Lambda = 0$ when $\rho = 0$ and
\begin{equation*}
E = \int_{\R^d} \frac{1}{2} \big| \nabla \sqrt{\rho} \big|^2 + \frac{1}{2} \Lambda^2 \mathcal L^d < \infty.
\end{equation*}
The question is under which conditions there exists a wave function $\psi \in H^1(\R^d,\C)$ such that
\begin{equation*}
\sqrt{\rho} = |\psi|, \quad J = \sqrt{\rho} \Lambda = \Im \big( \bar \psi \nabla \psi).
\end{equation*}
The second equation gives for $\psi = \sqrt{\rho} w$ smooth, $|w| = 1$, that $i \Lambda = \sqrt{\rho} \bar w \nabla w$.

Interpreting $\rho \mathcal L^d$ as a measure in the metric space $\R^d$, this question can be stated in generality as follows: given metric measure space $(X,d,\mu)$ and a cotangent vector field $v \in L^2(T^* X)$, is there a function $w \in H^1(\mu,\mathbb S^1)$ such that
\begin{equation*}
dw = i w v.
\end{equation*} %dw = i w v$?

We show that under some assumptions on the metric measure space $(X,d,\mu)$ (conditions which are verified on Riemann manifolds with the measure $\mu = \rho \mathrm{Vol}$ or more generally on non-branching $MCP(K,N)$), we show that the necessary and sufficient conditions for the existence of $w$ is that (in the case of differentiable manifold)
\begin{equation*}
\int v(\gamma(t)) \cdot \dot \gamma (t) dt \in 2\pi \Z
\end{equation*}
for $\pi$-a.e. $\gamma$, where $\pi$ is a test plan supported on closed curves. This condition generalizes the conditions that the vorticity is quantized. We also give a representation of every possible solution.

In particular, we deduce that the wave function $\psi = \sqrt{\rho} w$ is in $W^{1,2}(X)$ whenever $\sqrt{\rho} \in W^{1,2}(X)$.
\end{abstract}

\keywords{Wave functions, exact forms, quantum hydro-dynamics}

\thanks{I wish to thank Paolo Antonelli and Piero Marcati for introducing me to the beautiful subject of QHD}

\maketitle

%\begin{center}
%	\emph{Dedicated to Alberto Bressan on the occasion of his 60th birthday}
%\end{center}

\begin{center}
Preprint SISSA 18/2021/MATE
\end{center}

\tableofcontents

\section{Introduction}
\label{S:intro_wavef}

This paper is devoted to the following question arising in Quantum Hydro-Dynamics (QHD): assume that $(\sqrt{\rho},\Lambda) \in H^1(\R^d) \times L^2(\R^d,\R^d)$ are such that $\Lambda = 0$ when $\rho = 0$. % and
%\begin{equation*}
%E = \int_{\R^d} \frac{1}{2} \big| \nabla \sqrt{\rho} \big|^2 + \frac{1}{2} \Lambda^2 \mathcal L^d < \infty.
%\end{equation*}
The question is under which conditions there exists a wave function $\psi \in H^1(\R^d,\C)$ such that
\begin{equation}
\label{Equa:rho_Lambda_psi}
\sqrt{\rho} = |\psi|, \quad J = \sqrt{\rho} \Lambda = \Im \big( \bar \psi \nabla \psi).
\end{equation}
The second equation gives for $\psi = \sqrt{\rho} w$ smooth, $|w| = 1$, that $i \Lambda = \sqrt{\rho} \bar w \nabla w$. The interest for the above question arises because $\rho, J$ are obtained as solutions to the QHD system
\begin{equation}
\label{Equa:QHD}
\begin{cases}
\partial_t \rho + \dive J = 0, \\
{\displaystyle \partial_t J + \dive \big( \Lambda \otimes \Lambda + p(\rho) \Id \big) = \dive \bigg( \frac{\nabla^2 \rho}{4} - \nabla \sqrt{\rho} \otimes \nabla \sqrt{\rho} \bigg),}
\end{cases}
\end{equation}
which is the system of PDE satisfied by the hydrodynamics quantities in \eqref{Equa:rho_Lambda_psi} if $\psi \in C(\R,H^1(\R^d,\C))$ solves the Schr\"odinger equation
\begin{equation}
\label{Equa:Schrodinger}
i \partial_t \psi =  - \frac{1}{2} \Delta \psi + f'(|\psi|^2) \psi,
\end{equation}
where
\begin{equation*}
f(\rho) = \rho \int_{\rho*}^\rho \frac{p(s) - p(\rho*)}{s^2} ds, \quad \rho^* \text{ reference density}.
\end{equation*}
Apart from the 1d case, where a weak solution to \eqref{Equa:QHD} can be constructed and then the wave function $\psi$ is recovered (see \cite{AntMarZhe:1dQHD}), in the general case one first obtains the solution to \eqref{Equa:Schrodinger} and then shows that the associated hydrodynamics quantities \eqref{Equa:rho_Lambda_psi} satisfy \eqref{Equa:QHD}; for an overview of the mathematical theory for QHD we refer to \cite{AntMar:noteQHD}.

An implication that $(\rho,J)$ are derived from a wave function $\psi$ is that
\begin{equation}
\label{Equa:curl_1}
\mathrm{curl}\, J = 2 \nabla \sqrt{\rho} \wedge \Lambda, 
\end{equation}
which mimics the fact that $\mathrm{curl}\, \nabla \theta = 0$. Being this condition a local condition, it is not sufficient for the existence of a wave function $\psi$, as one can see with the elementary example in $\R^2$
\begin{equation}
\label{Equa:counter_J_curl}
\rho(x) \in C^1_c(\R^2), \rho(0) = 0, \qquad J(x) = \rho(x) \frac{\alpha x^\perp}{|x|^2}, \ \alpha \notin \Z.
\end{equation}
Indeed \eqref{Equa:curl_1} is verified and being $v = \alpha x^\perp/|x|^2$, $\mathrm{curl}\,v = 2 \pi \alpha \delta_0$, for $\alpha \notin \N$ it is impossible to solve
\begin{equation*}
%\label{Equa:Diff_w_first}
D w = i w v,
\end{equation*}
where $Dw$ is the differential of the function $w$.
%where we denote with $Dw$ the differential of a function $w$.

In \cite{AntMarZhe:intriQHD} the authors present some results which extend the 1d case recontruction of the wave function $\psi$ to the case when $x \in \R^d$ and $\rho,J$ are radially symmetric, and more interestingly when $x \in \R^2$, the mass $\rho$ is continuous and the vacuum set $\{\rho = 0\}$ is given by isolated points $\{x_1,x_2,\dots\}$, and the velocity field $v = J/\rho$ is a distribution such that
\begin{equation}
\label{Equa:integer_cirl}
\mathrm{curl} \, v = 2\pi \sum_j k_j \delta_{x_j}, \qquad k_j \in \Z.
\end{equation}
In this case the local function $w(x)$ given by $D w = iv w$ (which is obtained by the above curl-free condition) can be extended to a global one because of the \emph{integer circuitation condition} \eqref{Equa:integer_cirl}: indeed, considering a single singular point $x_1 = 0$
\begin{equation*}
\mathrm{curl}\, v = 2\pi k \delta_0,
\end{equation*}
one deduce that for all test function $\phi$
\begin{equation*}
- \int \nabla^\perp \phi \cdot v \mathcal L^2 = 2\pi k \phi(0).
\end{equation*}
In particular (by considering radial function $\psi$, for example) one deduce that for $\mathcal L^1$-a.e. $r$ it holds
\begin{equation}
\label{Equa:weak_curl}
\int_0^1 v(\gamma_r(t)) \cdot \dot \gamma_r(t) dt = 2 \pi k, \quad \gamma_r(t) = r (\cos(2\pi t),\sin(2\pi t)).
\end{equation} 
Hence, the local functions $w$ can be extended globally, since after a rotation around $x_1$ the value 
\begin{equation}
\label{Equa:inter_simple}
w(\gamma_r(1)) = w(\gamma_r(0)) e^{i \int_0^1  v(\gamma_r(t)) \cdot \dot \gamma_r(t) dt} = w(\gamma_r(0))
\end{equation}
is the same, and indeed the counterexample \eqref{Equa:counter_J_curl} does not satisfy exactly \eqref{Equa:integer_cirl} about the point $x=0$.

With more generality, one can see that the condition \eqref{Equa:integer_cirl} is equivalent to the following. Let $\Gamma(\R^2)$ be the metric space of Lipschitz curves $\gamma : [0,1] \mapsto \R^2$ with the uniform topology $C_0$, and define a \emph{test plan $\pi$} as a probability measure such that its evaluation $e(t)_\sharp \pi$ at time $t$ satisfies
\begin{equation*}
\int \phi(x) (e(t)_\sharp \pi)(dx) = \int \phi(\gamma(t)) \pi(d\gamma) \leq C \int \phi(x) dx,
\end{equation*}
for all $\phi \geq 0$ continuous function. The formula \eqref{Equa:inter_simple} is then rewritten as follows: for $\pi$-a.e. $\gamma$ such that $\gamma(0) = \gamma(1)$ (i.e. it is a closed path) it holds
\begin{equation}
\label{Equa:circute}
\int_0^1 v(\gamma(t)) \cdot \dot \gamma(t) dt = 2\pi \sum_j k_j \mathrm{Wi}(\gamma,x_j) \in 2\pi\Z,
\end{equation}
where $\mathrm{Wi}(\gamma,x)$ is the \emph{winding number} of $\gamma$ around $z \notin \gamma([0,1])$. Being the set $\{\rho=0\}$ locally finite (or more generally of $\mathcal H^1$-measure $0$), one can prove that $\pi$-a.e. $\gamma$ is not passing through $\{\rho = 0\}$, so that for $\pi$-a.e. $\gamma$ the winding number $\mathrm{Wi}(\gamma,x_i)$ is defined for all $x_i$. The above formula is actually equivalent to the weak formulation \eqref{Equa:weak_curl} because for sufficiently regular functions the boundary of almost all level sets is the union of closed curves.

Formula \eqref{Equa:circute} is meaningful in any dimension, and actually even in metric measure spaces, and gives rise to the following questions.

\begin{problem*}
\label{Prob:circ_1}
Let $(X,d,\mu)$ be a metric measure space (a Polish space with $\mu \in \mathcal P(X)$ Borel probability measure) and let $v \in L^2(T^* X)$ be a cotangent vector field such that, if $\pi$ is a test plan supported on the set of closed curves, then
\begin{equation}
\label{Equa:integer_circ_32}
\int_0^1 v(\gamma(t)) \cdot \dot \gamma(t) dt \in 2\pi\Z \qquad \text{for $\pi$-a.e. $\gamma$.}
\end{equation}
\begin{enumerate}
\item Is there a function $w \in L^\infty(X,\mathbb S^1)$ such that its differential satisfies
\begin{equation}
\label{Equa:Diffe:relw}
Dw = i w v \, ?
\end{equation}
\item Assuming moreover that $m$ is the ambient measure in $(X,d)$ and
\begin{equation}
\label{Equa:meaure_mu}
\mu = \frac{|f|^2 m}{\|f\|_2^2} \quad \text{with} \ f \in W^{1,2}(X,\R),
\end{equation}
does the function $f w$ belong to $W^{1,2}(X,\C)$?
\end{enumerate}
\end{problem*}

Above and in the following we will use the identification $\{z \in \C,|z|=1\} = \mathbb S^1$, this should not generate confusion. In the case $X = \R^d$ (or a differentiable manifold) the objects considered in the previous statement are standard, while in Section \ref{S:setting} we recall their definitions for generic metric measure spaces following \cite{giglipasqualetto:NSG}.

The assumption that $v \in L^2(T^*X)$ is the analog of the requirement $\Lambda = \sqrt{\rho} v \in L^2(\R^d)$ in the Euclidean setting. We will call the condition \eqref{Equa:integer_circ_32} the \emph{integer circuitation condition}. It is obvious that the opposite is trivial, i.e. $v= -i Dw/w$ has integer circuitation.

It turns out that in general metric spaces the answer is no, simple because the space may have too few curves and the measure $\mu$ may be too singular (Example \ref{Ex:notgoodsol}). But in more regular spaces the answer is positive:

\begin{theorem}
\label{Theo:gen_case}
Assume that $(X,d,m)$ has the measure contraction property $MCP(K,N)$ (and if branching the ambient measure $m$ is the Hausdorff measure $\mathcal H^N$). Then, for every $f \in W^{1,2}(X,\R)$ and cotangent vector field $v \in L^2(T^*X)$ w.r.t. the measure $\mu = |f|^2 m/\|f\|_2^2$, there exists $w \in L^\infty(X,\mathbb S^1)$ such that 
\begin{equation}
\label{Equa:cicuit_dP}
Dw = iw v \quad \text{and} \quad fw \in W^{1,2}(X,\C).
\end{equation}
Moreover, there exists a partition of $\{|f| > 0\}$ into at most countably many sets $\Omega_i$, $m(\Omega_i) > 0$,
\begin{equation*}
\{|f| > 0\} = \bigcup_{i \in \N} \Omega_i,
\end{equation*}
such that for every function $w'$ satisfying $Dw = i w v$ it holds
\begin{equation*}
\frac{w'(x)}{w(x)} = c_i \in \mathbb S^1 \quad \text{constant for $m$-a.e. $x \in \Omega_i$}. 
\end{equation*}
\end{theorem}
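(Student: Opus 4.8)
The plan is to build $w$ by integrating the equation $Dw = iwv$ along curves and to use the integer circuitation condition \eqref{Equa:integer_circ_32} to guarantee that the integration is path-independent. Heuristically, along a rectifiable curve $\gamma$ the equation reads $\frac{d}{dt} w(\gamma(t)) = i\, w(\gamma(t))\, v(\gamma(t))\cdot\dot\gamma(t)$, whose solution is the phase transport
\begin{equation*}
w(\gamma(t)) = w(\gamma(0)) \exp\!\Big( i \int_0^t v(\gamma(s))\cdot\dot\gamma(s)\, ds \Big).
\end{equation*}
Thus, once the phase of $w$ is fixed at one point of each ``connected component'', the value at every other point is determined by transport, and the only obstruction to consistency is that two curves joining the same pair of points could produce different phases; but their concatenation is a closed loop, so by \eqref{Equa:integer_circ_32} the phase discrepancy lies in $2\pi\Z$ and the transported value is unchanged. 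The role of the hypothesis $MCP(K,N)$ is precisely to guarantee that the space carries enough curves for this transport to reach $m$-a.e.\ point; this is exactly what fails in Example~\ref{Ex:notgoodsol}.

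To make this precise I would first produce a \emph{maximal} test plan $\bar\pi$ (with respect to $\mu = |f|^2 m/\|f\|_2^2$), built from the geodesic flow of the $MCP(K,N)$ space, whose curves generate an equivalence relation $E$ on $\{|f|>0\}$ that is as coarse as any test plan can make it. Since $\mu$ charges only $\{f\neq 0\}$, the bounded-compression property forces $\bar\pi$-a.e.\ curve to avoid $\{f=0\}$, so the classes of $E$ stay inside $\{|f|>0\}$. I would then select a Borel section $\bar S$ meeting each class once (measurable selection for the analytic relation $E$), disintegrate $\mu = \int_{\bar S}\mu_y\,\varpi(dy)$ along the quotient map $\mathtt e$ onto $\bar S$, fix an arbitrary measurable datum $\omega:\bar S\to\mathbb S^1$, and \emph{define}
\begin{equation*}
w(x) := \omega(\mathtt e(x)) \exp\!\Big( i \int_0^{t_x} v(\gamma(s))\cdot\dot\gamma(s)\, ds \Big), \qquad \gamma(0)=\mathtt e(x),\ \gamma(t_x)=x,
\end{equation*}
for a concatenation $\gamma$ of curves of $\bar\pi$ joining the base point $\mathtt e(x)\in\bar S$ to $x$. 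Path-independence, hence well-posedness $m$-a.e., comes from \eqref{Equa:integer_circ_32} as above, while the measurability of $w\in L^\infty(X,\mathbb S^1)$ requires a measurable selection of the connecting chain together with the measurable dependence of the phase integral on the endpoints: this is the first technical point, to be handled within the $\sigma$-algebra generated by analytic sets.

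With $w$ in hand the two identities in \eqref{Equa:cicuit_dP} are comparatively routine. By construction $t\mapsto w(\gamma(t))$ solves the transport ODE along $\bar\pi$-a.e.\ curve, and since a maximal plan tests the differential in every direction this is precisely the statement $Dw=iwv$ in the calculus of \cite{giglipasqualetto:NSG}; in particular the pointwise norms satisfy $|Dw|=|v|$ because $|iw|=1$. For the Sobolev bound I would use the Leibniz estimate for upper gradients together with $|w|=1$,
\begin{equation*}
|D(fw)| \le |Df| + |f|\,|v| \in L^2(m),
\end{equation*}
the first term being in $L^2(m)$ since $f\in W^{1,2}(X,\R)$ and the second since $\int |f|^2|v|^2\,dm = \|f\|_2^2\int|v|^2\,d\mu<\infty$ by $v\in L^2(T^*X)$; hence $fw\in W^{1,2}(X,\C)$.

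For the classification, if $w'$ also solves $Dw'=iw'v$ then, using that $v$ is real and $|w|=1$ so that $1/w=\bar w$ and $D\bar w=-i\bar w v$, the ratio $u=w'\bar w$ satisfies $Du = (iw'v)\bar w + w'(-i\bar w v)=0$. A function with vanishing differential is constant along $\bar\pi$-a.e.\ curve, hence $E$-measurable, so $u$ is constant on each class of $E$; the partition $\{\Omega_i\}$ is then the family of classes of positive measure and $c_i$ the common value of $u$ there. The genuinely hard step — and the place where both the maximality of $\bar\pi$ and the geometry $MCP(K,N)$ are indispensable — is to show that this partition is \emph{at most countable with each $m(\Omega_i)>0$}, equivalently that the quotient measure $\varpi$ of the maximal relation is purely atomic: a non-maximal plan, say one whose curves foliate $X$ into a continuum of measure-zero fibres, would let $u$ vary over uncountably many independent phases and destroy the conclusion. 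I expect to obtain atomicity from the localization (needle decomposition) available on $MCP(K,N)$ spaces, which forces $\mu$-a.e.\ point to be joined to $\mu$-a.e.\ point of its component by the geodesic plan, so that the indecomposable pieces of the saturated measure algebra $\mathcal M^{E}$ are exactly the countably many positive-measure components of $\{|f|>0\}$.
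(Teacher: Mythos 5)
Your overall architecture is the one the paper actually follows (maximal plan, generated equivalence relation, Borel section plus phase transport, ratio argument for uniqueness, positive-measure classes from the $MCP$ geometry), but two steps you treat as automatic are precisely where the paper has to work, and as written they are genuine gaps.

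First, you invoke the integer circuitation condition \eqref{Equa:integer_circ_32} for individual concatenated loops: ``their concatenation is a closed loop, so the phase discrepancy lies in $2\pi\Z$.'' But \eqref{Equa:integer_circ_32} is an almost-everywhere statement \emph{relative to test plans supported on $\Gamma_c$}; a loop obtained by gluing two curves from a full-$\pi$-measure set is a single curve, not a test plan, and nothing yet says it has integer circuitation. One must first prove that every test plan is concentrated on a set $A_\pi$ such that \emph{every} subloop (finite concatenation of pieces of curves of $A_\pi$, and every restriction $R_{s,t}\gamma$ between self-intersection times) lies in $\mathcal I_c$. This is the content of Proposition \ref{Prop:self_inters} and Theorem \ref{theo:neglinters}: the paper builds auxiliary test plans $\pi' = (\gamma')_\sharp\xi$ from measures $\xi \in \Pi^\leq(\pi,\dots,\pi)$ with marginals dominated by $\pi$, and then uses Kellerer's duality theorem to convert ``$\xi(B)=0$ for all such $\xi$'' into ``$B$ is covered by products of $\pi$-negligible sets.'' Without this step the path-independence of your phase transport, hence the well-posedness of $w$, does not follow; the same issue reappears in your uniqueness argument when you glue curves to compare values of $u=w'\bar w$.

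Second, the claim $fw \in W^{1,2}(X,m)$ is not a routine Leibniz estimate, because $w$ is only defined $\mu$-a.e., i.e.\ on $\{|f|>0\}$, and $|Dw| = |v|$ lies only in $L^2(\mu)$, not $L^2(m)$; so $w$ is not a Sobolev function on $(X,m)$ and the upper-gradient Leibniz rule cannot be applied directly. Moreover your assertion that bounded compression forces $\bar\pi$-a.e.\ curve to \emph{avoid} $\{f=0\}$ is too strong: the compressibility bound \eqref{Equa:curves_pi} only gives that the length-weighted time spent in $\{f=0\}$ is null, and an $m$-test plan (which is what you must test $fw$ against) has curves genuinely crossing $\{f=0\}$, where $w$ is undefined. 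The paper bridges this with Lemma \ref{Lem:Borel_H_1}: a Borel selection of the intervals $\{|f\circ\gamma|>2^{-n}\} = \bigcup_k (t^-_{k,n},t^+_{k,n})$, the observation that the restricted plans $\pi_{k,n}$ are $\mu$-test plans with compressibility constant $2^{2n}C/\|f\|_2^2$, the one-dimensional chain rule \eqref{Equa:chain_fw} along those curves, and finally a limit $n\to\infty$ extending $t\mapsto (fw)(\gamma(t))$ by continuity (by zero across $\{f=0\}$). Your final inequality $|D(fw)|\le |Df| + |f||v|$ is the right conclusion, but the argument that legitimizes it on $(X,m)$ is exactly this missing passage from $m$-test plans to $\mu$-test plans.
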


The second part of the statement is the same as the 1d case, where in every interval such that $\{|f| > 0\}$ the function $w$ is determined up to a constant. Moreover, every Riemann manifold satisfies the assumptions of the theorem. %, so in particular it is valid in $\R^d$.

\smallskip

Before giving an overview of the proof, some comments are in order.

\smallskip

The actual result proved here is more general: the idea is to decompose the space $X$ into regions $\{\Omega_\alpha\}_\alpha$ in which a.e. points $x,y \in \Omega_\alpha$ can be connected by test plans, and in these regions to construct the function $w$ by integrating along curves. The technical part is to remove curves $\gamma$ which are not "good" for $v$: either the integral along $\gamma$ is not defined or the circuitation \eqref{Equa:cicuit_dP} is not integer if $\gamma$ is closed. The non technical part is that such a decomposition does not need to be sufficiently regular to decompose the metric measure space $(X,d,m)$ into metric measure spaces $(\Omega_\alpha,d,m_\alpha)$. Example \ref{Ex:notgoodsol} indeed uses the well known construction of the Vitali set to show that the decomposition above does not exist in general. From a measure theoretical point of view, one needs to assume that the partition $\{\Omega_\alpha\}_\alpha$ admits a \emph{strongly consistent disintegration}, i.e. one can write
\begin{equation}
\label{Equa:conncc}
\mu = \int \mu_\alpha \omega(d\alpha) \quad \text{with} \quad \mu_\alpha \in \mathcal P(\Omega_\alpha).
\end{equation}
This condition is certainly fullfilled if the partition $\{\Omega_\alpha\}_\alpha$ is countable, and this countability property is true under the assumptions of Theorem \ref{Theo:gen_case}. It is not clear to us if such a decomposition into these measure-theoretic indecomposable components $\Omega_\alpha$ has some interests on its own.

\smallskip

Within these setting, the function $v$ is not locally integrable, so $\mathrm{curl}\,v$ is meaningless even distributionally. On the other hand, as in the example of \eqref{Equa:counter_J_curl}, the study of $\mathrm{curl}\,\Lambda$ does not yield any condition into the existence of the function $w$. It is not clear if there exists some $\mathrm{curl}$-like functional which allows to write the integer circuitation condition in a more PDE point of view, for example identifying the quantum vortex structure on which the rotational part of $v$ is concentrated and has $2\pi\Z$-density, as in \eqref{Equa:integer_cirl}.

\smallskip

Another question is whether the integer circuitation assumption \eqref{Equa:integer_circ_32} is preserved by the flow of \eqref{Equa:QHD}. A tautological argument is that since a solution to the QHD system is constructed by means of a wave function $\psi$ solving Shcr\"odinger equation, a posteriori the integer circuitation condition must be compatible with the QHD formulation. Similarly for the study of the identity \eqref{Equa:curl_1}, it can be shown that the integer circuitation condition is compatible for smooth solutions of \eqref{Equa:QHD}. Indeed one rewrites the system \eqref{Equa:QHD} as
\begin{equation}
\label{Equa:smoot_QHD}
\begin{cases}
\partial_t \rho + v \cdot \nabla \rho = 0, \\
{\displaystyle \partial_t v + v \cdot \nabla v = \nabla \bigg( \frac{\Delta \sqrt{\rho}}{2\sqrt{\rho}} - f'(\rho) \bigg),}
\end{cases}
\end{equation}
so that both $\rho$ and $v$ are transported by the same vector field $v$. Hence any curve $\gamma$ at time $t$ is transported back to a curve $\gamma'$ at time $0$ (being the flow $\dot X = v(t,X)$ locally smooth), and defining
\begin{equation*}
\gamma_t(\alpha) = X(t,\gamma(\alpha))
\end{equation*}
one has
\begin{equation*}
\begin{split}
\frac{d}{dt} \int v(t,\gamma_t(\alpha)) \cdot \dot \gamma_t(\alpha) d\alpha \bigg|_{t=0} &= \int \nabla \bigg( - \frac{|v_0|^2}{2} + \frac{\Delta \sqrt{\rho_0}}{2\sqrt{\rho_0}} - f'(\rho_0) \bigg) \cdot \dot \gamma(\alpha) d\alpha \\
& \quad + \int v_0(\gamma(\alpha)) \cdot \nabla v_0(\gamma(\alpha)) \dot \gamma(\alpha) d\alpha \\
&= \int \nabla \bigg( \frac{\Delta \sqrt{\rho_0}}{2\sqrt{\rho_0}} - f'(\rho_0) \bigg) \cdot \dot \gamma(\alpha) d\alpha = 0.
\end{split}
\end{equation*}
Hence the fact that the l.h.s. of the equation for $v$ in \eqref{Equa:smoot_QHD} is a gradient yields that the integer circuitation condition is preserved.

\smallskip

The last two remarks (i.e. if one can give a meaning to $\mathrm{curl}\,v$ and if the computation above can be performed for nonsmooth solutions) are related to the fundamental question of regularity of the QHD solution, which is echoed by the regularity of the flow of the transport equation with vector field $v$. As far as we know, these questions are completely open in the general case and they are deeply connected to the proof of existence of a solution by purely fluid dynamics techniques.

\smallskip

As a last observation, a completely similar analysis can be done for the question:
\begin{equation*}
\text{given $v \in L^1(T^*X)$, there exists a function $w$ such that $Dw = v$, i.e. the form is exact?}
\end{equation*}
The results and the proof are exactly the same, by replacing the integer circuitation condition with the \emph{circuitation free condition}: if $\pi$ is a test plan supported on the set of closed curves, then
\begin{equation}
\label{Equa:free_circ_32}
\int_0^1 v(\gamma(t)) \cdot \dot \gamma(t) dt = 0 \qquad \text{for $\pi$-a.e. $\gamma$.}
\end{equation}

\subsection{Sketch of the proof and structure of the paper}
\label{Ss:sketch_proof_intro}

In a nutshell, the idea of constructing $w$ is the same as for the smooth case: given a value $w(\bar x)$ as some point $\bar x$, find $w(x)$ by solving the ODE
\begin{equation}
\label{Equa:ODE:curev}
\frac{d}{dt} w(\gamma(t)) = i w(\gamma(t)) v(\gamma(t)) \cdot \dot \gamma(t), \qquad w(\gamma(0)) = w(\bar x), \ \gamma(0) = \bar x, \gamma(1) = x.
\end{equation}
The integer circuitation condition should take care of the possibility of having multiple curves connecting $\bar x$ to $x$.

The problem is that all definitions are given in terms of measures, and then one has to consider only the good path, i.e. the ones for which the circuitation condition holds. This difficulty occurs also for $X = \R^d$, because of the density $f \in W^{1,2}$. Given a closed curve $\gamma$ such that $\gamma(t) = \gamma(s)$ for some $0 < s < t < 1$, it is not even true in general that the restricted closed curve $R_{s,t} \gamma = \gamma \llcorner_{[s,t]}$ is of integer circuitation, even if $\gamma$ is. The first results (Proposition \ref{Prop:self_inters} and Theorem \ref{theo:neglinters}) state that under the integer circuitation condition \eqref{Equa:integer_circ_32} every test plan is concentrated on a set of curves $A_\pi$ such that every closed curve obtained by patching pieces of finitely many curves of $A_\pi$ has integer circuitation. In particular, restrictions of curves satisfy \eqref{Equa:integer_circ_32}. We call the curves obtained by piecing together parts of other curves \emph{subcurve} or \emph{subloops} if closed.

The argument in $\R^d$ (and in differentiable manifolds) is much easier than the general case in MMS: the case $X = \R^2$ is particularly straightforward (Section \ref{S:case_R2}). Indeed, one can use test plans to move mass along the $i$-axis, $i=1,2$, and then construct a finite dimensional map from $\R^{4}$ into the set of curves $\Gamma$ such that every point in some Cantor-like neighborhood is connected with another one by a specific curve moving first along the $1$-direction and then along the $2$-direction (Lemma \ref{Lem:test_R2}). The integer circuitation condition and the result of the previous part imply that we can remove a set of measure $0$ in $\R^{4}$ and the remaining curves have integer circuitation together with the subcurves (Corollary \ref{Cor:every_traj_good}). Since by $W^{1,2}$-regularity of the function $f$ one can  locally assume that $f > 1/2$ on sufficiently many of these segments, the function $w$ can be constructed on this set (which is of positive $\mathcal L^2$-measure, Lemma \ref{Lem:w_funct_good}). This procedure can be repeated in every Lebesgue point of $f$ and $Df$, and the different $w$ (which are defined up to a constant) can be related with each other by a test plan (if it exists) concentrated on curves with initial points in one set and end points in the other set, making them compatible (Lemma \ref{Lem:single_constat}). The same analysis can be done in $\R^d$, $d \geq 3$, at the expense of a more complicated selection procedure for the curves. Since we will address the problem in general spaces, we will leave this extension to the reader which wants to avoid some measure theoretic arguments used in the remaining parts.

The observation above is that locally one can construct suitable paths which have integer circuitation, where $f > 0$. In particular these sets cannot be further divided by just removing a set of measure $0$ trajectories of the above plan. The idea is then to find this minimal partition (see Section \ref{S:constr_min_eq} for the abstract theory, and Section \ref{Ss:appli_test} for the application to out case). \\
Given a set of curves $\{\gamma_\alpha\}_\alpha$, define the minimal (by set inclusion) equivalence relation $E$ by requiring that $\gamma_\alpha(t) E \gamma_\alpha(s)$ for all $s,t \in [0,1]$, $\gamma_\alpha$ (Definition \ref{Def:equiv_rel_ch}). If we have a test plan $\pi$, for every set of curves $A_\pi$, $\pi(A_\pi) = 1$, one has clearly different equivalence relations $E^{A_\pi}$. However the space $X$ is separable, and we use this fact to deduce that there is a set $\bar A_\pi$ such that the equivalence relation $E^{\bar A_\pi}$ is minimal: this means that if $\mathcal M^{\bar A_\pi}$ is the family of $\mu$-measurable sets which are saturated w.r.t. $E^{\bar A_\pi}$, then for all other sets $A_\pi$ where $\pi$ is concentrated there exists a $\mu$-conegligible set where the $\sigma$-algebra generated by $E^{A_\pi}$ is a subalgebra of $\mathcal M^{E^{\bar A_\pi}}$ (Proposition \ref{Prop:exi_min_pi}). This does not means that every atom of $\mathcal M^{\bar A_\pi}$ coincides with an equivalence class of $E^{\bar A_\pi}$: the situation is completely equivalent to the Vitali set of $[0,1]$, where $E = [0,1]/\Q$ and then the $E$-saturated $\mathcal L^1$-measurable sets have Lebesgue measure $0$ or $1$ (see Example \ref{Ex:notgoodsol}).

Notwithstanding the counterexample right above, the next step we perform is %that by looking at the set of complete $\sigma$-algebras $\mathcal M^\pi$, $\pi$ test plan, one can get 
to find a test plan $\bar \pi$ such that the maximal measure algebra $\mathcal M^{\bar \pi}$ is the minimal among all measure algebras $\mathcal M^\pi$ (Proposition \ref{Prop:perf_test}). The reason for this analysis is the following: first, being $\mathcal M^{\bar \pi}$ maximal, up to a negligible set the space $X$ cannot be further divided into smaller atoms, i.e. the atoms of $\mathcal M^{\bar \pi}$ are the minimal components used by the test plan $\bar \pi$. Being $\mathcal M^{\bar \pi}$ minimal among the maximal measure algebras, we deduce that the all others test plans use curves which are contained into atoms of $\mathcal M^{\bar \pi}$ (up to a negligible set as usual, see Corollary \ref{Cor:set_good}). These components are in some sense the minimal connected-by-curves components of $(X,\mu)$, in the sense that one cannot find a further decomposition such that all test plans have trajectories concentrated on the atoms of this new refined decomposition.

Unfortunately finding the minimal-maximal $\sigma$-algebra is not sufficient: indeed the problem is that in general the components of the minimal-maximal decomposition do not coincide with the equivalence classes of the generating relation (as observed above, and as Example \ref{Ex:notgoodsol} shows). This condition seems essential and it is assumed in the construction of $w$ (Assumption \ref{Ass:disi_cons}). In the case of sufficiently regular MMS (in particular $MCP(K,N)$-spaces where the ambient measure is $f\mathcal H^N$ with $f \in W^{1,2}(X,\mathcal H^N)$), it is satisfied (Example \ref{Ex:torus_Riemann}). Under the assumption that the equivalence classes coincide with the atoms of the measure algebra $\mathcal M^{\bar \pi}$, the construction is now fairly simple, and follows the line for the $\R^2$ case: for the test plan $\bar \pi$ take the set $A^{\bar \pi}$ such that every subloop is of integer circuitation, choose a Borel section and a Borel function $\omega$ on the quotient space and for every $x$ compute $w$ by solving the ODE \eqref{Equa:ODE:curev} with initial data $\omega(y)$ along a path connecting $y$ to $x$, where $y$ is the point in Borel section in the equivalence class of $x$ (Proposition \ref{Prop:exist_w}). It is important to note that the existence of such a path follows because we assume that the atoms of the measure algebra coincide with the equivalence classes, or equivalently that the disintegration is strongly consistent \eqref{Equa:conncc}. The fact that $\bar A^{\bar \pi}$ is of free circuitation gives that the value $w(x)$ is independent of the particular path chosen. The same argument yields that any other solution $w'$ is such that $w'/w$ is constant on the equivalence class (Theorem \ref{Theo:w_final}).

The final analysis of the case when $\mu = f m$, with $f \in W^{1,2}(X,m)$ is at this point a corollary, once it is assumed that the space $(X,d,m)$ has a test plan connecting all points at lest locally (a \emph{Democratic test plan}, as defined in \cite{lottvillani:weakcur}, see Assumption \ref{Assu:local_test}): indeed in this case the equivalence classes have positive measure, and then the disintegration is strongly consistent. Metric measure spaces enough regular satisfy this assumption (Example \ref{Ex:torus_Riemann}). For the existence of $w$ solving \eqref{Equa:Diffe:relw}, the only thing to verify is that there exists a test plan $\bar \pi$ satisfying the assumptions of the previous sections, and this is true in these spaces (Lemma \ref{Lem:one_class}). Finally, to check that $fw \in W^{1,2}(X,m)$ it is enough to prove that that it holds along test plans for $m$, and this is fairly easy once one have a Borel way of passing from test plans for $m$ to test plans for $\mu$ (Lemma \ref{Lem:Borel_H_1} and Proposition \ref{Prop}).

\section{Notation}
\label{S:notation}

The real numbers are denoted by \gls{Real}, the rational numbers by \gls{Rational}, the natural numbers by \gls{Natural} and the integers by \gls{Integer}. The unitary circle in $\R^2$ is denoted by \gls{S1}. The cardinality of a set $A$ is denoted by \gls{cardA}.

A generic constant (which may vary from line to line) is denoted with \gls{Cconst} or $C$, eventually with an index or accent.

Let \gls{Xmetric} be a separable complete metric space with distance \gls{ddistance}. A point in $X$ will be denoted by \gls{xmetric}. The balls of radius $r$ centered at $x$ is denoted as \gls{Ball}, and more generally the $r$ neighborhood of a set $A$ as \gls{BallSet}. The family of compact subsets of a metric space $X$ is denoted by \gls{K(X)}, and we will use the Hausdorff distance \gls{Hausd}. Countable intersection of open sets are \gls{Gdelta}-sets, while countable union of closed sets are \gls{Fsigma}-sets. The projection onto $X$ is written as \gls{projX}, or sometimes \gls{proji} if it is the $i$-component of $\prod_j X_j$. The Borel $\sigma$-algebra is \gls{Borelsig}, the projection of closed sets are the analytic sets \gls{analytic}, and the $\sigma$-algebra generated by them is \gls{sigmanal}.

In case there is no ambiguity, the geodesic connecting $x$ to $y$ is $\gls{geoxy}$: this notation will be used mainly in $\R^d$.

A generic function from $X$ to $Y$ will be denoted by $\gls{genfunc} : X \to Y$, and we will call $f$ a Borel function when it is a Borel function whose domain of definition \gls{Domf} is a Borel set. The graph of a function $f : X \to Y$ is denoted by
\begin{equation*}
\gls{Graphf} = \big\{ (x,f(x)), x \in X \big\} \subset X \times Y,
\end{equation*}
and its range by %of $f$ is
\begin{equation*}
\gls{Rangef} = \big\{ f(x), x \in X \big\}.
\end{equation*}
%The domain of $f$ is \gls{Domf}. 
For a (multi-valued) function $f:X \to Y$ we will denote $f(x)$ both as the image of $f(x)$ or as the section $\Graph f \cap \{x\} \times Y$. In general this will not generate confusion.

The space of Lipschitz functions \gls{LipX} from $[0,1]$ into $X$ is denoted by \gls{Gamma}: we will refer to it as the set of Lipschitz curves on $X$. Its subset of closed Lipschitz curves (or closed loops or loops for shortness) will be written as \gls{Gammac}. An element of both sets will be written as \gls{gamma}. Explicitly
\begin{equation*}
\Gamma = \Lip([0,1];X), \quad \Gamma_c = \Big\{ \gamma \in \Lip([0,1];X), \gamma(0) = \gamma(1) \Big\}.
\end{equation*}
It is known that $\Gamma$ is a separable metric space with the $C^0$-topology \cite[Theorem 2.4.3]{Sri:courseborel}. %We will also called closed curves loops.

The evaluation map $\gls{evaluation} = e(t,\gamma)$ is the map
\begin{equation*}
\Gamma \ni \gamma \mapsto e(t,\gamma) = \gamma(t) \in X.
\end{equation*}
The restriction operator \gls{Resst} is defined as the restriction of the curve $\gamma$ to the interval $[s,t] \subset [0,1]$,
\begin{equation*}
R_{s,t} (\gamma)(\tau) = \gamma\big( (1-\tau) s + \tau t \big), \quad 0 \leq s \leq t \leq 1.
\end{equation*}
The time inversion of a curve is denoted by \gls{Revmap}.

The metric derivative of $\gamma$ is denoted by \gls{dotgamma}. The length of a curve $\gamma \in \Gamma$ is defined as
\begin{equation*}
\gls{Lengthgam} = \int_0^1 \dot{|\gamma|}(t) dt.
\end{equation*}
%We will sometimes use parameterizetions of $\gamma$ in which the interval of existence is not $[0,1]$: we will sometimes consider them  

Given a set $A \subset \Gamma$, we say that $\gamma \in \Gamma$ is a sub-curve of $A$ if there are finitely many curves $\gamma_i \in A$ such that
\begin{equation*}
\Graph \gamma \subset \bigcup_i \Graph \gamma_i.
\end{equation*}
The set of all sub-curves of $A$ is denoted by \gls{GammaA}.

The Lebesgue measure in $\R^d$ is denoted with \gls{Lebesmeasu} and the $d$-dimensional Hausdorff measure by \gls{Hasdmeas}. Let \gls{probX} be the space of probability measures over $X$ and \gls{m},\gls{mumeasure} elements of $\mathcal P(X)$: in the second part of our analysis $\mu = fm$ (and $m$ will be used as ambient probability for the space \gls{W12}), but in the first part we will consider $(X,d,\mu)$ as a given metric measure space. The topology on $\mathcal P(X)$ is the narrow topology. We denote the standard integral spaces by \gls{Lp}. The push forward of a measure $\mu$ by a map $f$ is written by \gls{pushf}. The disintegration of a probability according to a map $f : X \to Y$ will be denoted as
\begin{equation*}
\mu = \int \gls{muy} \varpi, \quad \gls{varpi} = f_\sharp \mu.
\end{equation*}
We will also write $\varpi = \mu \llcorner \mathcal M$, where $\mathcal M$ is the $\sigma$-algebra made of $\mu$-measurable sets saturated w.r.t. the equivalence relation $Ex = f^{-1}(f(x))$. 

If \gls{equivE} is an equivalence relation, we will write $xEx'$ if $x,x'$ are equivalent, and the equivalence class for $x$ as \gls{equivcla} or $E(x)$. We will not distinguish between $E$ and its graph in the product space. We will often use the letters $\alpha,\beta,\dots$ for the elements of the quotient space $X/E$. The quotient map is considered either as a section of $X$ (i.e. a set \gls{SectionE} containing a single point for every equivalence class) or as a map $\gls{quotinetmap} : X \to [0,1]$ whose level sets are the equivalence classes. %A section of an equivalence relation $E$ is denoted by \gls{SectionE}.

\section{Settings and basic computations}
\label{S:setting}

In this section we introduce the setting for our problem: it is based on the analysis on metric measure space of \cite{gigli:nonsmootailored,giglipasqualetto:NSG}, with elementary slight variations.

\subsection{Test plans}
\label{Ss:test_plans}

\begin{definition}
\label{Def:admissi}
A measure $\gls{pi} \in \mathcal P(\Gamma)$ is a \emph{$\mu$-admissible test plan} (or \emph{test plan} when the reference measure $\mu$ is clear from the context) if for some constant $C$ it holds
\begin{equation}
\label{Equa:curves_pi}
\int_0^1 e(t)_\sharp (\dot{|\gamma|} \eta) dt \leq C \mu.
\end{equation}
The set of $\mu$-admissible test plans is denoted by \gls{adm}, and \gls{Ccompr} is called the \emph{compressibility constant} of $\pi$.
\end{definition}

The above formula \eqref{Equa:curves_pi} is equivalent to %can be written as
\begin{equation*}
\int \bigg[ \int_0^1 \ind_{\gamma(t) \in A}(t,\gamma) \dot{|\gamma|}(t) dt \bigg] \eta(d\gamma) \leq C \mu(A).
\end{equation*}
It is clear that the above definition is invariant if we reparameterize the curves $\gamma$. %, in which case we write Condition \ref{Equa:curves_pi} as
%\begin{equation*}
%\int \mathcal H^1 \llcorner_{\gamma([0,1])} \pi(d\gamma) \leq C \mu.
%\end{equation*}
In the following we will often consider the parametrization of $\gamma$ by length, so that $\dot{|\gamma|} = 1$: this is to avoid to write the factor $\dot{|\gamma|}$ in many equations. In that case the interval of integration is $[0,L(\gamma)]$, and we will often not specify the interval of integration if clear from the context.

\begin{remark}
\label{Rem:variat}
For the analysis in this work we do not require the conditions
\begin{equation}
\label{Equa:variat}
\int \bigg[ \int_0^1 \big( \dot{|\gamma|}(t) \big)^2 dt \bigg] \pi(d\gamma) < \infty, \quad e(t)_\sharp \pi \leq C \mu,
\end{equation}
see for example \cite[Definition 2.1.1]{giglipasqualetto:NSG}. These assumptions %that
%\begin{equation}
%\label{Equa:bound_infty_t}
%\end{equation}
will be analyzed in Proposition \ref{Prop:gene_test_plan}: here we can just say that since the integer circuitation condition \eqref{Equa:integer_circ_32} is required for $\pi$-a.e. $\gamma$ and is independent on the parametrization of $\gamma$, it is natural to consider test plans as in Definition \ref{Def:admissi}.
%
%It will be clear from what follows that one can add these conditions to test plans without changing the proof or the results.
\end{remark}

\begin{remark}
\label{Rem:transfe}
An admissible test plan $\pi$ is given by the constant curves: define the map
\begin{equation*}
X \ni x \mapsto \gamma_x \in \Gamma_c, \qquad \gamma_x(t) = x \quad \forall t \in [0,1],
\end{equation*}
and set
\begin{equation}
\label{Equa:pi_f}
\pi = (\gamma_{x})_\sharp (f(x) \mu(dx))
\end{equation}
for every $f \in L^\infty(\mu)$, $\|f\|_1 = 1$. It is immediate from the definition to see that
\begin{equation*}
\pi(\Gamma) = 1, \quad e(t)_\sharp \pi = f\mu,
\end{equation*}
so that it is admissible. In particular the measure defined in \eqref{Equa:pi_f} when $f \equiv 1$ is denoted with \gls{hatpi}.

It may happens that these are the only test plans: for example, if $(X,d)$ is totally disconnected (or zero dimensional \cite[Section 2.2]{Sri:courseborel}), the only Lipschitz curves $\gamma : [0,1] \to X$ are the constant ones.
\end{remark}

It is immediate to see that if $\pi \in \adm(\mu)$ and $\gamma \mapsto (s(\gamma),t(\gamma))$ is a Borel function, then $(R_{s,t})_\sharp \pi, R_\sharp \pi \in \adm(\mu)$ because
\begin{equation*}
e(\tau)_\sharp \big( (R_{s,t})_\sharp \pi \big) = \gamma((1-\tau)s(\gamma) + \tau t(\gamma))_\sharp \pi, \quad e(\tau)_\sharp \big( R_\sharp \pi \big) = e(1-\tau)_\sharp \pi.
\end{equation*}

\subsection{Tangent and cotangent bundles}
\label{Ss:tangent_cotanget}

Here we recall the definition of tangent and cotangent bundles in a metric measure space $X$, following the approach of \cite{gigli:nonsmootailored,giglipasqualetto:NSG}.

Let \gls{S1presob} be the set of functions $f : X \to \R$ with the following property: there exists $g \in L^1(X)$ such that for all test plan $\pi$ 
\begin{equation*}
f \circ \gamma \ \text{is a.c. $\pi$-a.e. $\gamma$} \quad \wedge \quad e_\sharp \bigg( \bigg| \frac{d}{dt} f(\gamma(t)) \bigg| \mathcal L^1 \llcorner_{[0,1]} \times \pi \bigg) \leq C g \mu,  
\end{equation*}
where $C$ is the compressibility constant of $\pi$ (see \eqref{Equa:curves_pi}) and $e : [0,1] \times \Gamma \to X$ is the evaluation map. The essential infimum of all $g$'s is called the minimal upper gradient \gls{uppegr}.

Let \gls{vcvector} be an element of the cotangent bundle \gls{L1T*X}, which can be defined as the $L^1(\mu)$-normed $L^\infty(\mu)$-modulus closure of the set
\begin{equation*}
\bigcup_{n \in \N} \Big\{ \{(\Omega_i,f_i)\}_{i=1}^n, \{\Omega_i\}_{i=1}^n  \ \text{partition of} \ X, \{f_i\}_{i=1}^n \subset S^1(X) \Big\}/\sim
\end{equation*}
w.r.t. the norm
\begin{equation*}
\big\| \{(\Omega_i,f_i)\}_{i=1}^n \big\| = \sum_i \int_{\Omega_i} |Df_i| \mu,
\end{equation*}
where the equivalence relation $\sim$ is w.r.t. the above norm, and $|Df|$ is the minimal upper gradient (see \cite[Theorem 4.1.1]{giglipasqualetto:NSG}). We can associate to a function $f \in S^1(X)$ the cotangent vector field $\gls{df} = (X,f)$, so that
\begin{equation}
\label{Equa:tangnet_dense}
\{(\Omega_i,f_i)\}_{i=1}^n = \sum_{i=1}^n \ind_{\Omega_i} Df_i.
\end{equation}

The tangent bundle \gls{LinfTX} is defined as the dual $L^\infty(\mu)$-normed $L^\infty(\mu)$-modulus, i.e. the space of linear functional $L : L^1(T^*X) \to L^1(\mu)$ such that for $f \in L^\infty(\mu)$, $v \in L^1(T^*X)$
\begin{equation*}
L(f v) = f L(v), \quad \|L\| = \sup_{\|v\| \leq 1} \int |L(v)| \mu < \infty.
\end{equation*}
The above norm definition gives %in particular 
that the pointwise norm is
\begin{equation*}
|L| = \mathrm{ess\text{-}sup} \big\{ L(v), \|v\| \leq 1 \big\} \in L^\infty(\mu).
\end{equation*}

If $\pi$ is a test plan, then for every simple element $v = \sum_i \ind_{\Omega_i} Df_i$ we can define
\begin{equation}
\label{Equa:simple_plan_ct}
\Lambda_\gamma(t,v) = \sum_i \ind_{\Omega_i}(\gamma(t)) \frac{d}{dt} f_i(\gamma)(t) \quad (\dot{|\gamma|} = 1),
\end{equation}
which satisfies by \eqref{Equa:curves_pi}
\begin{equation*}
\int \bigg[ \int_0^{L(\gamma)} |\Lambda_\gamma(t,v)| dt \bigg] \eta(d\gamma) \leq \sum_i C \int_{\Omega_i} |Df_i| \mu = C \|v\|_{L^1(T^*X)}. 
\end{equation*}
Hence, being the elements of the form \eqref{Equa:tangnet_dense} dense in $L^1(T^*X)$ and $v \mapsto \Lambda_\gamma(t,v)$ linear, passing to the limit we obtain that for every $v \in L^1(T^*X)$ there exists a unique $L^1(\mathcal L^1 \times \pi)$-function $\Lambda_\gamma(t,v)$ such that
\begin{equation}
\label{Equa:plan_ct}
\gls{Lambda} = \lim_{v_n \to v} \Lambda_\gamma(t,v_n),
\end{equation}
where $v_n = \sum_i \ind_{\Omega_i} Df_i$ is converging to $v$ in $L^1(T^*X)$, and it satisfies
\begin{equation}
\label{Equa:norm_Lambda_gamma}
\|\Lambda_\cdot(\cdot,v)\|_{L^1(\mathcal L^1 \otimes \pi)} \leq C \|v\|_{L^1(T^*(X))}, \quad \text{$C$ compressibility constant}.
\end{equation}
Hence $L^1(T^*X) \ni v \mapsto \Lambda_\gamma(t,v) \in L^1(\mathcal L^1 \times \pi)$ is linear and continuous. %In particular it is represented as a $w_\pi \in L^\infty(TX)$.

\begin{remark}
\label{Rem:Lambda_pi}
The function $\Lambda_\gamma(\cdot,v)$ depends on $\pi$. Since we are using it only with a test plan which is clear from the context, we will not add the dependence w.r.t. $\pi$ in what follows.
\end{remark}

%In order to define the speed of $\pi$, we consider any set of the form
%\begin{equation*}
%[s,t] \times A \subset \R \times \Gamma,
%\end{equation*}
%and define as before the element $w_{(s,t) \times A}$ of $L^\infty(TX)$
%\begin{equation*}
%\langle v,w_{(s,t) \times A} \rangle = \int_A \bigg[ \int_s^t \Lambda_\gamma(t;v) dt \bigg] \pi(d\gamma).
%\end{equation*}
%More generally, we can write the above function $w_B$ for a generic $B \subset \R \times \Gamma$. We thus for every partition there is a simple $L^\infty(TX)$ valued function
%\begin{equation*}
%w(\gamma,t;\{B_i\}_i) = \sum_i \ind_{B_i} w_{B_i}.
%\end{equation*}

We will need the following property of $\Lambda_\gamma(t,v)$: we write the proof for completeness.

\begin{lemma}
\label{Lem:eta_gamma_Lambda}
There exists a $\mathcal L^1 \times \pi$-negligble set $N$ such that %For $\pi$-a.e. trajectory $\gamma$ it holds
\begin{equation*}
\gamma(t) = \gamma(s) \quad \Rightarrow \quad \Lambda_\gamma(t,v) = \Lambda_\gamma(s,v)
\end{equation*}
for all $s,t \in [0,L(\gamma)] \setminus N(\gamma)$. %, with $\mathcal L^1(N(\gamma)) = 0$.
%
%Every plan $\eta$ is concentrated on a set of trajectories $A_\pi$ with the following property: for all $\gamma,\gamma' \in A_\pi$ there exists $\mathcal L^1$-negligible sets
%\begin{equation*}
%\Lambda_\gamma(t) = \Lambda_{\gamma'}(s) \quad \forall s \in (0,L(\gamma)) \setminus N_\gamma,\ \forall t \in (0,L(\gamma')) \setminus N_{\gamma'}. %, \ \mathcal L^1(N_\gamma) = \mathcal L^1(N_{\gamma'}) = 0.
%\end{equation*}
\end{lemma}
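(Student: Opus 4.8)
The plan is to reduce the statement first to \emph{simple} cotangent fields and then to a classical fact about absolutely continuous functions. Recall from \eqref{Equa:plan_ct}--\eqref{Equa:norm_Lambda_gamma} that $v \mapsto \Lambda_\cdot(\cdot,v)$ is linear and continuous from $L^1(T^*X)$ into $L^1(\mathcal L^1 \times \pi)$. Hence, picking simple fields $v_n = \sum_i \ind_{\Omega_i^n} Df_i^n \to v$ in $L^1(T^*X)$, I may pass to a subsequence along which $\Lambda_\gamma(t,v_n) \to \Lambda_\gamma(t,v)$ for $(\mathcal L^1 \times \pi)$-a.e.\ $(t,\gamma)$; call $N_\infty$ the product-negligible set carrying the failure of this pointwise convergence. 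If each simple $v_n$ admits a product-negligible set $N_n$ with the asserted property, then on the complement of $N := N_\infty \cup \bigcup_n N_n$ the equality $\Lambda_\gamma(t,v) = \Lambda_\gamma(s,v)$ follows by letting $n \to \infty$ in $\Lambda_\gamma(t,v_n) = \Lambda_\gamma(s,v_n)$, since both $(t,\gamma)$ and $(s,\gamma)$ avoid every $N_n$ and $N_\infty$. Thus it suffices to treat a single simple field.

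For $v = \sum_i \ind_{\Omega_i} Df_i$ formula \eqref{Equa:simple_plan_ct} gives $\Lambda_\gamma(t,v) = \sum_i \ind_{\Omega_i}(\gamma(t)) \tfrac{d}{dt}(f_i \circ \gamma)(t)$, and since $\gamma(t) = \gamma(s)$ forces $\ind_{\Omega_i}(\gamma(t)) = \ind_{\Omega_i}(\gamma(s))$ for every $i$, everything reduces to the scalar claim that, for $g := f_i \circ \gamma$ (which is absolutely continuous for $\pi$-a.e.\ $\gamma$ because $f_i \in S^1(X)$), one has $g'(t) = g'(s)$ whenever $\gamma(t) = \gamma(s)$. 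The crucial observation is that $g$ is \emph{constant} on each level set $\gamma^{-1}(x) \subseteq [0,L(\gamma)]$, being there equal to $f_i(x)$. I would then invoke the elementary fact that an absolutely continuous function has vanishing derivative at $\mathcal L^1$-a.e.\ point of any set on which it is constant: at a point that is simultaneously a differentiability point and a density point of such a set one finds points of the set on both sides with equal value, so the difference quotient tends to $0$. To build $N(\gamma)$ I collect, for each $x$ in the \emph{countable} family of values with $\mathcal L^1(\gamma^{-1}(x)) > 0$, the $\mathcal L^1$-negligible set of non-density or non-differentiability points inside $\gamma^{-1}(x)$; their union is $\mathcal L^1$-negligible, and outside it any coincidence $\gamma(t) = \gamma(s) = x$ on a positive-measure level set yields $g'(t) = g'(s) = 0$.

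It then remains to check that $N$ assembled in this way is jointly measurable in $(t,\gamma)$ and, by Fubini together with the admissibility bound \eqref{Equa:curves_pi} of Definition \ref{Def:admissi}, genuinely $(\mathcal L^1 \times \pi)$-negligible; this rests on choosing the maps $\gamma \mapsto \gamma^{-1}(x)$ and the associated density and differentiability sets in a Borel fashion in $\gamma$. I expect the \textbf{main obstacle} to be exactly the self-intersection times lying in \emph{null} level sets, where $\gamma$ revisits a value $x$ at only $\mathcal L^1$-negligibly many parameters: there the constancy argument is vacuous and, curve by curve, the derivative need not even agree, so the consistency cannot be read off a single trajectory. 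The resolution must instead come from the test-plan structure: disintegrating $\mathcal L^1 \times \pi$ along the evaluation map $e(t,\gamma) = \gamma(t)$ and exploiting that the occupation measure $\int_0^1 e(t)_\sharp(\dot{|\gamma|}\,\pi)\,dt \leq C\mu$ is absolutely continuous, one aims to discard a further product-negligible set after which the surviving coincidences are governed by the positive-measure level sets already handled. Making this disintegration precise, and in particular carrying out the measurable selection of the fibrewise negligible sets uniformly in $\gamma$, is the technical heart of the argument.
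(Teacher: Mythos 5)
Your first step (reduction to simple fields $v_n = \sum_i \ind_{\Omega_i} Df_i$ via the continuity \eqref{Equa:norm_Lambda_gamma} and a.e.\ convergence along a subsequence) is exactly the paper's, and your density-point argument correctly settles coincidences $\gamma(s)=\gamma(t)=x$ when $\mathcal L^1(\gamma^{-1}(x))>0$. The genuine gap is the case you yourself flag and postpone: coincidences at values whose level set is $\mathcal L^1$-null. Your proposed fix --- discard them by a further $(\mathcal L^1\times\pi)$-negligible set obtained from a disintegration along the evaluation map and the compressibility bound \eqref{Equa:curves_pi} --- cannot work, because such coincidences may occupy a set of parameters of \emph{positive} $\mathcal L^1$-measure on $\pi$-many curves. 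Concretely, in $X=\R^2$ with $\mu$ comparable to Lebesgue on a box, let $h\geq 0$ be Lipschitz and vanish exactly on a fat Cantor set $K\subset[0,1]$, and let $\gamma$ run along $[0,1]\times\{0\}$ and return along the graph of $h$: every point of $K\times\{0\}$ is visited exactly twice (so all level sets are finite, hence null), yet the set of parameters involved in coincidences is $K\cup(2-K)$, of positive measure, and the vertical translates of $\gamma$ form an admissible test plan. These pairs therefore survive every negligible set, so for them the conclusion must be \emph{proved}, while your constancy argument is vacuous there, since $f\circ\gamma$ is not locally constant near such parameters. (The example also shows how delicate this case is: for $f(x_1,x_2)=x_1$ the two derivatives at a coincidence pair are $+1$ and $-1$, i.e.\ the pairing depends on the direction of traversal, so any complete argument has to keep track of orientation.)

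The paper fills this gap with a pathwise idea that is absent from your proposal and that does not use the test-plan structure at all. It introduces the set $N'$ of \emph{transversal} self-intersections, namely pairs $(s,\gamma)$ for which there exist $n$ and $t$ with $\gamma(s+2^{-n}(-1,1))\cap\gamma(t+2^{-n}(-1,1))=\{\gamma(s)\}=\{\gamma(t)\}$; distinct such parameters are $2^{-n}$-separated, so $N'(\gamma)$ is countable for \emph{every} curve and $N'$ is automatically $\mathcal L^1\times\pi$-negligible, with no appeal to Fubini, the occupation measure, or measurable selections. At any coincidence $\gamma(s)=\gamma(t)$ outside $N'$ at which $\tfrac{d}{dt}(f\circ\gamma)$ and the metric derivative exist, non-transversality produces sequences $s_n\to s$, $t_n\to t$ with $s_n\neq s$, $t_n\neq t$ and $\gamma(s_n)=\gamma(t_n)$; since then $f(\gamma(s_n))-f(\gamma(s))=f(\gamma(t_n))-f(\gamma(t))$, passing to the limit in the difference quotients (the metric derivatives matching $|s_n-s|$ with $|t_n-t|$) identifies the two derivatives. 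This transversal/non-transversal dichotomy together with the difference-quotient limit along coincidence pairs is the missing ingredient; your level-set argument covers only a special instance of it, and the disintegration route you sketch cannot replace it.
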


In particular, the function $\Lambda_\gamma$ can be defined as an $L^1$-function of $\mathcal H^1 \llcorner_{\gamma(0,L(\gamma))}$. %Note that this statement is meaningful only using the length parametrization of curves as in \eqref{Equa:simple_plan_ct}.

\begin{proof}
By \eqref{Equa:norm_Lambda_gamma} we have that if $v_n \to v$ in $L^1(T^*X)$ then $\Lambda_\gamma(t,v_n) \to \Lambda_\gamma(t,v)$ in $L^1(\mathcal L^1 \times \pi)$, so that up to subsequences it converges $\mathcal L^1 \times \pi$-a.e.. In particular, let $v_n \to v$ be a sequence of cotangent vector fields in $L^1(T^*X)$ such that the statement is true for each $v_n$, and let $N_n \subset [0,1] \times \Gamma$ be the corresponding $\mathcal L^1 \times \pi$-negligible set such that
\begin{equation*}
\forall s,t \notin N_n(\gamma) \ \Big( \gamma(t) = \gamma(s) \ \Rightarrow \ \Lambda_\gamma(s,v_n) = \Lambda_\gamma(t,v_n) \Big).
\end{equation*}
Clearly the set $N = \cup_n N_n$ is negligible. Since $\Lambda_\gamma(t,v_n) \to \Lambda_\gamma(t,v)$ up to a subsequence in a $\mathcal L^1 \times \pi$-conegligble set $A \subset \R \times \Gamma$, we obtain that for all $(t,\gamma),(s,\gamma) \in A \setminus N$ it holds
\begin{equation*}
\gamma(t) = \gamma(s) \ \Rightarrow \ \Lambda_\gamma(s,v) = \Lambda_\gamma(t,v).
\end{equation*}
It is thus sufficient to prove the statement is true for $v$ of the form \eqref{Equa:tangnet_dense}, and in particular for a single $f$.

Define the set of transversal self-intersections
\begin{equation*}
\begin{split}
N' &= \bigcup_n N'_{n} \\
&= \bigcup_{n \in \N} \bigg\{  (s,\gamma) : \exists t \in [0,L(\gamma)] \, \Big( \gamma(s + 2^{-n}(-1,1)) \cap \gamma(t + 2^{-n}(-1,1)) = \{\gamma(s)\} = \{\gamma(t)\} \Big) \bigg\}.
\end{split}
\end{equation*}
It is fairly easy to see that for every $\gamma$ $N'_{n}(\gamma)$ is finite for every $\gamma$: indeed if $s_n,t_n \in [0,L(\gamma)]$ are a sequence of points in the definition of $N'_n(\gamma)$, then it is clear that $|s_n-s_m| \vee |t_n - t_m| \geq 2^{-n}$. Hence $N'(\gamma)$ is countable and $N'$ is $\mathcal L^1 \times \pi$-negligible.

For a given $f \in S^1(X)$, consider the conegligible set
\begin{equation*}
M = \Big\{ (t,\gamma) \notin N': \dot{|\gamma|}(t) \ \text{exists and } f \circ \gamma \ \text{differentiable in $t$} \Big\}.
\end{equation*}
Note that if $\gamma(s) = \gamma(t)$, $s,t \in M(\gamma)$, then there are sequences $s_n \to s,t_n \to t$, $s_n,t_n \not= s,t$ respectively such that $\gamma(s_n) = \gamma(t_n)$: hence by the formula \eqref{Equa:simple_plan_ct} we deduce that %if $s,t$ are a differentiability point with $s \notin N_\gamma$ of $f \circ \gamma$ and $\gamma(t) = \gamma(s)$, then
\begin{equation*}
\frac{d}{dt} f \circ \gamma(t) = \frac{d}{dt} f \circ \gamma(s).
\end{equation*}
The complement $N = M^c$ satisfies the statement.
%
%
%For every simple cotangent vector field $v = \sum_i \ind_{\Omega_i} Df_i$ the formula \eqref{Equa:simple_plan_ct} gives that the derivative exists
%
%
%
\end{proof}

In particular we deduce that

\begin{corollary}
\label{Cor:Lambda_pi_curve}
For $\pi$-a.e. $\gamma$ the function $\Lambda_\gamma$ depends only on $\gamma(t)$ for $\mathcal L^1$-a.e. $t$, i.e. can be written as a function on $\gamma([0,1])$ defined $\mathcal H^1$-a.e. or equivalently it is a measurable function for the pull-back $\sigma$-algebra $e^{-1}(\gamma)(\mathcal B(X))$.
\end{corollary}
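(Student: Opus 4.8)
The plan is to derive the corollary directly from Lemma \ref{Lem:eta_gamma_Lambda} by slicing its negligible set with Fubini and then realizing $\Lambda_\gamma(\cdot,v)$ as a conditional expectation along the map $\gamma=e(\cdot,\gamma)$. First I would apply Fubini to the $\mathcal L^1\times\pi$-negligible set $N$ produced by the lemma: for $\pi$-a.e.\ $\gamma$ the slice $N(\gamma)=\{t:(t,\gamma)\in N\}$ is $\mathcal L^1$-negligible. Intersecting this with the $\pi$-conull set of curves for which $t\mapsto\Lambda_\gamma(t,v)$ is a genuine $L^1([0,L(\gamma)])$ function (again Fubini, since $\Lambda_\cdot(\cdot,v)\in L^1(\mathcal L^1\times\pi)$ by \eqref{Equa:norm_Lambda_gamma}), I fix one such $\gamma$. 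On $[0,L(\gamma)]\setminus N(\gamma)$ the lemma asserts exactly that $\Lambda_\gamma(t,v)$ is constant on each fibre of $\gamma$, i.e.\ it depends only on the point $\gamma(t)$.

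Next I would turn this fibrewise constancy into honest measurability. The idea is to push $\mathcal L^1\llcorner[0,L(\gamma)]$ forward by the Borel map $\gamma$ to the compact set $\gamma([0,L(\gamma)])$ and disintegrate:
\[
\nu:=\gamma_\sharp\big(\mathcal L^1\llcorner[0,L(\gamma)]\big),\qquad \mathcal L^1\llcorner[0,L(\gamma)]=\int \lambda_x\,\nu(dx),\qquad \bar\Lambda(x):=\int \Lambda_\gamma(t,v)\,\lambda_x(dt),
\]
where the conditional measures $\lambda_x$ are concentrated on the fibres $\gamma^{-1}(x)$ (the disintegration exists because the parameter interval is a standard probability space and $X$ is Polish). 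Since $\mathcal L^1(N(\gamma))=0$ forces $\lambda_x(N(\gamma))=0$ for $\nu$-a.e.\ $x$, and $\Lambda_\gamma(\cdot,v)$ is constant on each fibre off $N(\gamma)$, the integral defining $\bar\Lambda(x)$ collapses to that common fibre value, giving $\bar\Lambda(\gamma(t))=\Lambda_\gamma(t,v)$ for $\mathcal L^1$-a.e.\ $t$. By construction $\bar\Lambda$ is $\nu$-measurable, so $\Lambda_\gamma(\cdot,v)$ agrees $\mathcal L^1$-a.e.\ with a function of the form $\bar\Lambda\circ\gamma$; this is precisely measurability with respect to the pull-back $\sigma$-algebra $e^{-1}(\gamma)(\mathcal B(X))$.

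Finally, to record the $\mathcal H^1$-a.e.\ formulation I would transfer the negligibility to the image: by the area formula $\nu=\theta\,\mathcal H^1\llcorner\gamma([0,L(\gamma)])$ with multiplicity $\theta\ge1$, so $\nu$-null sets on the image are $\mathcal H^1$-null and $\bar\Lambda$ is thus defined $\mathcal H^1$-a.e.\ on $\gamma([0,1])$; equivalently, since in the arclength parametrization $\gamma$ is $1$-Lipschitz, $\mathcal H^1(\gamma(N(\gamma)))\le\mathcal L^1(N(\gamma))=0$. I expect the only genuinely non-formal point to be this upgrade from \emph{fibrewise constancy} to \emph{pull-back measurability}: mere constancy on the blocks of a partition does not in general yield measurability with respect to the generated $\sigma$-algebra, and it is exactly the disintegration/conditional-expectation construction that circumvents this, by producing an honestly measurable representative $\bar\Lambda$ rather than defining it blockwise by hand. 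The remaining work is routine bookkeeping of the two null-set transfers (the Fubini slice and the Lipschitz image).
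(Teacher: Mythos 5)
Your proposal is correct and follows essentially the same route as the paper: the corollary is deduced directly from Lemma \ref{Lem:eta_gamma_Lambda} by slicing its $\mathcal L^1 \times \pi$-negligible set with Fubini, so that for $\pi$-a.e. $\gamma$ the fibrewise constancy holds off an $\mathcal L^1$-null set of parameters, and the $\mathcal H^1$-a.e. statement follows since the ($1$-)Lipschitz curve maps that null set to an $\mathcal H^1$-null subset of the image. The paper treats the passage from fibrewise constancy to a measurable representative on $\gamma([0,1])$ as immediate, whereas you make it explicit via the disintegration of $\mathcal L^1\llcorner[0,L(\gamma)]$ over $\gamma_\sharp(\mathcal L^1)$; this is a legitimate and careful way to fill in that implicit step, not a different proof.
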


%Let $f_i$ be a countable family of functions dense in the space of functions 

%\begin{remark}
%\label{Rem:tang_back}
%The above condition can be read as saying that $\gamma$ is tangent to $\gamma'$ for $\mathcal H^1 \llcorner_{\gamma([0,1]) = \gamma'([0,1])}$ To make this notion precise, one has to 
%
%Next, one considers the space $\R \times \Gamma$ with the measure $\dot{|\gamma|} \mathcal L^1 \otimes \pi$ and the evaluation map $e(t,\gamma)$, which is of bounded compression since $\pi$ is a test plan. For every simple cotangent vector
%\begin{equation*}
%\sum_i \ind_{\Omega_i} 
%\end{equation*}
%
%
%\end{remark}
%
%
%In the following we need to have test plans on the family of sub-curves $R_{s(\gamma),t(\gamma)}(\gamma)$, i.e. where the interval $(s(\gamma),t(\gamma))$ depends on $\gamma$. In general this cannot be done {\bf EXAMPLE}, so that we make the following assumption on the structure of the sapce $X$.
%
%\begin{assumption}
%\label{Assu:test_no_cycl}
%For every test plan $\pi$ there exists a test plan $\tilde \pi$ such that $\tilde \pi(\Gamma_c) = 0$ and
%\begin{equation*}
%e(0)_\sharp \tilde \pi = e(0)_\sharp \pi, \quad e(1)_\sharp \tilde \pi = e(1)_\sharp \pi.
%\end{equation*}
%\end{assumption}
%
%The next object is the central object in our analysis.

\subsection{The integer circuitation condition}
\label{Ss:integer_rot_cond}

Let $v \in L^1(T^* X)$ and $\Lambda_\gamma(t) = \Lambda(t,v) \in L^1(\mathcal L^1 \times \pi)$ be the corresponding $L^1$-function given by \eqref{Equa:simple_plan_ct}: being $v$ fixed from now on, for shortness we will not write the dependence w.r.t. $v$. We restate the assumption of integer circuitation \eqref{Equa:integer_circ_32} here below.

\begin{assumption}
\label{Def:direc_fidl}
For all test plans $\pi$ concentrated on $\Gamma_c$ it holds
\begin{equation}
\label{Equa:inter_circ}
\int_0^1 \Lambda_\gamma(t) \dot{|\gamma|}(t) dt \in 2 \pi \Z \quad \pi\text{-a.e.} \ \gamma,
\end{equation}
i.e. $v$ (or $\Lambda$) has \emph{integer circuitation}. 
%where $\dot{|\gamma|}(t)$ is the metric derivative of $\gamma$.
%\item \comme{NOT NEEDED }if $\pi_n \rightharpoonup \pi$ such that $\gamma(t)_\sharp \pi_n \leq C \mu$, then
%\begin{equation*}
%\gamma(t)_\sharp \big( d(\gamma) \mathcal L^1 \llcorner_{(0,1)} \otimes \pi_n \big) \rightharpoonup \gamma(t)_\sharp \big(d(\gamma) \mathcal L^1\llcorner_{(0,1)} \otimes \pi \big).
%\end{equation*}
%\end{enumerate}
%We say that $d$ is \emph{circuitation free} (\emph{of integer cicuitation}) if for all test plans $\pi$ concentrated on $\Gamma_c$ it holds
%\begin{equation*}
%\int_0^1 d(\gamma)(t) dt = 0 \quad \pi\text{-a.e.} \ \gamma. \qquad \bigg( \int_0^1 d(\gamma)(t) dt \in 2\pi \Z \quad \pi\text{-a.e.} \ \gamma. \bigg)
%\end{equation*}
\end{assumption}

We define
\begin{equation*}
\gls{InteCirc} = \bigg\{ \gamma \in \Gamma_c, \int_0^1 \Lambda_\gamma(t) \dot{|\gamma|}(t) dt \in 2 \pi \Z \bigg\},
\end{equation*}
so that we can rephrase the above assumption as $\mathcal I_c$ has full $\pi$-measure. Note that as far as we know $\mathcal I_c$ depends on $\pi$, but this should not create confusion being $\pi$ fixed in the construction of the function $w$ satisfying \eqref{Equa:Diffe:relw} (see Section \ref{S:constr_w}). %We will often refer to $\Lambda$ instead of the vector field $v$

An elementary example of an integer circuitation cotangent vector field is given by
$$
v = - i \frac{Df}{f},
$$
where $f \in L^0(\mu,\mathbb S^1)$ with $|Df| \in L^1$: in this case
\begin{equation*}
\Lambda_\gamma(t) = - \frac{i}{f} \frac{d}{dt} f \circ \gamma(t) \quad (\dot{|\gamma|} = 1),
\end{equation*}
with $\pi$-a.e. $\gamma$, with $\pi$ test plan. %The aim of the following analysis is to obtain conditions on $X$ under which there exists $f : X \mapsto \mathbb S^1$ satisfying the above equation: we will always think $\mathbb S^1 = \{ e^{i\theta}, \theta \in \R\}$.

\begin{remark}
\label{Rem:free_circ}
Another possible condition is to ask that
\begin{equation*}
\int_0^1 \Lambda_\gamma(t) \dot{|\gamma|}(t) dt = 0 \quad \pi\text{-a.e.} \ \gamma.
\end{equation*}
In this case we are looking for functions whose differential along $\gamma$ is $\Lambda_\gamma$, i.e. $v = Df$ is exact: we will say that every admissible $\pi$ has \emph{$0$ circuitation} or it is \emph{circuitation free}. The analysis of this situation is completely similar to the case we are considering in the following, so the same constructions apply.
\end{remark}

For completeness we show that we can restrict the set of test plans to a more tame one. %, which however is not invariant for reparameterizations.

\begin{proposition}
\label{Prop:gene_test_plan}
Assume that Assumption \ref{Def:direc_fidl} holds only for test plans which satisfy
\begin{equation*}
L(\gamma) \in L^\infty(\pi), \quad e(t)_\sharp \pi \leq C \mu,
\end{equation*} 
Then Assumption \ref{Def:direc_fidl} holds for all test plans.
\end{proposition}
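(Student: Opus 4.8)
The plan is to reduce the integer circuitation condition for an arbitrary admissible $\pi$ concentrated on $\Gamma_c$ to the tame plans by a \emph{cyclic reparametrization} of the closed curves: this preserves the circuitation while turning the time-integrated compressibility of Definition \ref{Def:admissi} into a genuine pointwise-in-time bound $e(t)_\sharp\pi\le C\mu$. First I reparametrize every curve by arclength (the circuitation $\int_0^{L(\gamma)}\Lambda_\gamma(t)\,dt$ is invariant under reparametrization) and discard the $\pi$-negligible family of constant curves, which have zero circuitation. I then split $\pi=\sum_{k\in\Z}\pi_k$ according to the dyadic length scale $L(\gamma)\in[2^{-k-1},2^{-k})$; each normalized $\pi_k$ is again admissible and concentrated on closed arclength curves whose length lies in $[2^{-k-1},2^{-k})$. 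Since it suffices to prove the claim for each $\pi_k$, I fix $k$ and write $\pi$ for $\pi_k$.

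The key construction is the cyclic shift. For $s\in[0,L(\gamma)]$ set $\gamma^s(t)=\gamma\big((t+s)\bmod L(\gamma)\big)$, which is again a closed arclength curve with the same image, and define the plan
\[
\tilde\pi=\Phi_\sharp\Big(\pi\times\tfrac{1}{L(\gamma)}\,\L^1\rest_{[0,L(\gamma)]}\Big),\qquad \Phi(\gamma,s)=\gamma^s.
\]
Then $L(\gamma)\le 2^{-k}$ gives $L(\gamma)\in L^\infty(\tilde\pi)$, and the change of variables $u=(t+s)\bmod L(\gamma)$ yields, for every $t$ and every Borel $A$,
\[
e(t)_\sharp\tilde\pi(A)=\int \frac{1}{L(\gamma)}\int_0^{L(\gamma)}\ind_A(\gamma(u))\,du\,\pi(d\gamma)\le 2^{k+1}\!\int\!\!\int_0^{L(\gamma)}\ind_A(\gamma(u))\,du\,\pi(d\gamma)\le 2^{k+1}C\,\mu(A),
\]
so $e(t)_\sharp\tilde\pi\le 2^{k+1}C\,\mu$ uniformly in $t$. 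Hence $\tilde\pi$ is a tame test plan, and by hypothesis $\int_0^{L}\Lambda_{\gamma^s}(t)\,dt\in 2\pi\Z$ for $\tilde\pi$-a.e. curve.

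It remains to transfer this back to $\pi$ through the invariance of the circuitation under the shift. For a simple $v=\sum_i\ind_{\Omega_i}Df_i$ the quantity $\Lambda_\gamma(t)$ is given explicitly by \eqref{Equa:simple_plan_ct} and depends only on $\gamma(t)$ (Corollary \ref{Cor:Lambda_pi_curve}); consequently $\Lambda_{\gamma^s}(t)=\Lambda_\gamma\big((t+s)\bmod L(\gamma)\big)$ and, by periodicity, $\int_0^{L}\Lambda_{\gamma^s}\,dt=\int_0^{L}\Lambda_\gamma\,dt$. Passing to a general $v$ along an $L^1(T^*X)$-approximating sequence $v_n\to v$ and using the continuity estimate \eqref{Equa:norm_Lambda_gamma}, applied to $\tilde\pi$ and controlled by the corresponding $\pi$-norms through the explicit pushforward $\tilde\pi=\Phi_\sharp(\pi\times\tfrac1L\L^1)$, identifies $\Lambda^{\tilde\pi}_{\gamma^s}$ with $\Lambda^{\pi}_{\gamma}$ along the shift up to a $\pi\times\L^1$-negligible set. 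Thus the circuitation of $\gamma^s$ equals that of $\gamma$, so $\int_0^L\Lambda_\gamma\,dt\in 2\pi\Z$ for $\pi\times\L^1$-a.e. $(\gamma,s)$, hence by Fubini for $\pi$-a.e. $\gamma$; summing over $k$ gives the assertion.

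I expect the main obstacle to be precisely this last identification: reconciling the a priori plan-dependence of $\Lambda_\gamma$ (Remark \ref{Rem:Lambda_pi}) between $\pi$ and $\tilde\pi$, i.e. checking that the $L^1$-limits defining $\Lambda$ for the two plans agree along the cyclic shift. The clean way around it is to run the whole argument first for simple $v$, where $\Lambda$ is the intrinsic pointwise expression \eqref{Equa:simple_plan_ct} and no plan-dependence arises, and only afterwards to pass to the limit in $v$, using that convergence in $L^1(\L^1\times\pi)$ transfers to convergence in $L^1(\L^1\times\tilde\pi)$ under the measurable map $\Phi$.
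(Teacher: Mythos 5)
Your proof is correct and follows essentially the same route as the paper: both decompose $\pi$ into length buckets, use a cyclic shift of the closed curves to turn the time-integrated compressibility \eqref{Equa:curves_pi} into a pointwise bound $e(t)_\sharp \tilde\pi \leq C'\mu$, and then transfer the integer circuitation back via its invariance under the shift (the paper invokes Corollary \ref{Cor:Lambda_pi_curve} directly, averaging over a small $\delta$-window rather than the full period). Your extra care about the plan-dependence of $\Lambda_\gamma$ --- arguing first for simple $v$ and then passing to the $L^1$-limit --- is a legitimate tightening of a step the paper treats more briskly, but it does not change the substance of the argument.
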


\begin{proof}
Let $\pi \in \adm(\mu)$ be a test plan supported on $\Gamma_c$ as in Definition \ref{Def:admissi}.

Since Assumption \ref{Def:direc_fidl} is invariant for countable unions of test plans, we can assume that $\pi$ is concentrated on curves with length $L(\gamma) \in (\bar L,\bar L + 1)$, $\bar L$ fixed, and we thus consider the pameterization
\begin{equation*}
L(\gamma \llcorner_{[0,t]}) = L(\gamma) t.
\end{equation*}
In this case Condition \eqref{Equa:curves_pi} implies
\begin{equation*}
\int_0^1 e(t)_\sharp \pi \leq \frac{C}{\bar L} \mu = C' \mu.
\end{equation*}

Hence for all $\delta t > 0$ the measure $\pi'$ defined by
\begin{equation*}
\pi' = \fint_0^{\delta} \big[ (\gamma^{\epsilon})_\sharp \pi \big] d\epsilon, \quad \gamma^{\epsilon}(t) = \gamma(\epsilon + t \mod L(\gamma)),
\end{equation*}
is supported on $\Gamma_c$ and satisfies $\bar L \leq L(\gamma) \leq \bar L + 1$ and
\begin{equation*}
e(t)_\sharp \pi' = \fint_{t}^{\delta + t} \big[ e(s)_\sharp \pi \big] ds \leq \frac{C'}{\delta} \mu,
\end{equation*}
where for simplicity we assume that $0 \leq t \leq \bar L - \delta$. The assumptions on the proposition thus imply that Assumption \ref{Def:direc_fidl} holds for $\pi'$-a.e. $\gamma$, and by Corollary \ref{Cor:Lambda_pi_curve}
\begin{equation*}
\Lambda_{\gamma^\epsilon}(t) = \Lambda_\gamma(t+\epsilon \mod L(\gamma)),
\end{equation*}
so that we conclude that $\pi$ satisfies Assumption \ref{Def:direc_fidl}.
\end{proof}

\subsection{Intersection properties of curves}
\label{Ss:inter_prp}

By definition, any test plan $\pi$ supported on $\Gamma_c$ is concentrated on the set of curves such that
\begin{equation*}
\int_0^{L(\gamma)} \Lambda_\gamma(t) dt \in 2\pi \Z.
\end{equation*}
However a priori nothing is said about properties of intersecting curve: for example, given two curves $\gamma,\gamma'$ such that
\begin{equation}
\label{Equa:intersect}
\gamma(s) = \gamma'(s'), \ \gamma(t) = \gamma'(t'), \quad s \leq t, s' \leq t',
\end{equation}
one can construct a closed curve $\gamma''$ by setting
\begin{equation}
\label{Equa:ciclgam}
\gamma''(\tau) = \begin{cases}
R_{s,t}(\gamma)(2\tau) & \tau \in [0,1/2], \\
R_{1-t',1-s'}(R(\gamma'))(2\tau-1) & \tau \in (1/2,1],
\end{cases}
\end{equation}
i.e. moving from $\gamma(s)$ to $\gamma(t) = \gamma'(t')$ along $\gamma(\tau)$ and then returning to $\gamma'(s') = \gamma(s)$ along $\gamma(-\tau)$. We will use the above construction (i.e. a new closed curve obtained by piecing together parts of other curves) to show that we can also require
\begin{equation*}
%\label{Equa:intercc}
\int_0^1 \Lambda(\gamma'')(\tau) \dot{|\gamma''|}(\tau) d\tau \in 2\pi \Z
\end{equation*}
by removing a $\pi$-negligible set of trajectories, where $\gamma''$ is the curve constructed above. %Even the case of a loop in a self intersecting curve can be treated in this way.

We first start with self-intersecting curves.

\begin{proposition}
\label{Prop:self_inters}
Let $\Lambda$ be of integer circuitation. Every $\pi \in \adm(\mu)$ supported on $\Gamma_c$ is concentrated on a set of closed curves \gls{Api} such that if $\gamma(s) = \gamma(t)$ then $R_{s,t}(\gamma)$ has integer circuitation.
\end{proposition}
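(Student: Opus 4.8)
The plan is to argue by contradiction through a restriction-and-pushforward construction, exploiting that the restriction of an admissible test plan along a Borel choice of subinterval is again admissible (the observation made right after Definition \ref{Def:admissi}). Fix a test plan $\pi$ supported on $\Gamma_c$, and recall that for $\pi$-a.e.\ $\gamma$ the function $\Lambda_\gamma$ is a genuine $L^1$-function on $[0,L(\gamma)]$ (Corollary \ref{Cor:Lambda_pi_curve}), so that
\begin{equation*}
F_\gamma(s,t) := \int_s^t \Lambda_\gamma(\tau)\,d\tau
\end{equation*}
is continuous in $(s,t)$. The conclusion fails for $\gamma$ exactly when there is a self-intersection pair $s\le t$, $\gamma(s)=\gamma(t)$, with $F_\gamma(s,t)\notin 2\pi\Z$. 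Quantifying this by its strength, for each $k\in\N$ I set
\begin{equation*}
B_k := \Big\{ \gamma : \exists\, s\le t,\ \gamma(s)=\gamma(t),\ \dist\big(F_\gamma(s,t),2\pi\Z\big)\ge \tfrac1k \Big\},
\end{equation*}
and it suffices to prove $\pi(B_k)=0$ for every $k$, since the desired set is then $A_\pi=\Gamma_c\setminus\bigcup_k B_k$.

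The next step is a measurable selection of a bad pair on $B_k$. For $\pi$-a.e.\ $\gamma$ the set $D_\gamma^k=\{(s,t):s\le t,\ \gamma(s)=\gamma(t),\ \dist(F_\gamma(s,t),2\pi\Z)\ge 1/k\}$ is compact (closed inside $[0,1]^2$), and $\gamma\mapsto D_\gamma^k$ is a measurable compact-valued multifunction, because both $(s,t)\mapsto d(\gamma(s),\gamma(t))$ and $(s,t)\mapsto F_\gamma(s,t)$ depend measurably on $\gamma$. By the Kuratowski--Ryll-Nardzewski selection theorem there are $\pi$-measurable maps $\gamma\mapsto(s(\gamma),t(\gamma))\in D_\gamma^k$ on $B_k$; after modification on a $\pi$-null set these may be taken Borel, so that the invariance result quoted above applies.

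With this selection I form the restricted plan
\begin{equation*}
\pi_k := (R_{s,t})_\sharp\big(\pi\llcorner B_k\big).
\end{equation*}
By the cited invariance $\pi_k\in\adm(\mu)$, and since $R_{s,t}(\gamma)(0)=\gamma(s)=\gamma(t)=R_{s,t}(\gamma)(1)$ it is supported on $\Gamma_c$. The crucial compatibility to establish is that the circuitation of the loop $R_{s,t}(\gamma)$ computed with respect to $\pi_k$ coincides with $F_\gamma(s,t)$ computed with respect to $\pi$. This rests on the locality of $\Lambda$: for simple $v=\sum_i\ind_{\Omega_i}Df_i$ the defining formula \eqref{Equa:simple_plan_ct} is pointwise along the curve, hence manifestly compatible with restriction and reparametrization, and the passage to the $L^1$-limit \eqref{Equa:plan_ct} is preserved because the restriction map has controlled compressibility, so approximating sequences for $\pi$ push forward to approximating sequences for $\pi_k$; Lemma \ref{Lem:eta_gamma_Lambda} ensures the limit is genuinely a function of the trace of the curve, independent of parametrization.

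Granting this, Assumption \ref{Def:direc_fidl} applied to the test plan $\pi_k$ forces the circuitation of $R_{s,t}(\gamma)$ to lie in $2\pi\Z$ for $\pi_k$-a.e.\ loop, i.e.\ $F_\gamma(s(\gamma),t(\gamma))\in 2\pi\Z$ for $\pi$-a.e.\ $\gamma\in B_k$. But by the very choice of the selection $\dist(F_\gamma(s(\gamma),t(\gamma)),2\pi\Z)\ge 1/k$ on $B_k$, which is incompatible unless $\pi(B_k)=0$; this proves the claim. I expect the genuine obstacle to be the compatibility step, namely the transport of $\Lambda$ under restriction and pushforward, since $\Lambda$ is defined only $\pi$-a.e.\ and through an $L^1$-limit that depends on the chosen plan; the measurable selection, though it must be handled with care, is standard.
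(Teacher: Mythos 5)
Your proof is correct, and it shares the paper's core mechanism --- restricting $\pi$ along a Borel selection of self-intersection pairs produces an admissible plan supported on $\Gamma_c$, to which Assumption \ref{Def:direc_fidl} applies --- but it organizes the argument genuinely differently. The paper proceeds directly: it applies the selection theorem to the purely topological multifunction $F(\gamma)=\{(s,t):\gamma(s)=\gamma(t)\}$ (closed-valued, Borel, with no reference to $\Lambda$), extracts countably many selections whose graphs are dense in $F(\gamma)$, concludes from the assumption that each selected restriction has integer circuitation, and then upgrades this to \emph{all} intersection pairs by continuity of $(s,t)\mapsto\int_s^t\Lambda_\gamma\dot{|\gamma|}\,d\tau$. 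You instead argue by contradiction, stratifying the bad set by the quantitative defect $1/k$ (needed precisely so that $D^k_\gamma$ has closed values) and selecting one bad pair per curve; this buys you a shorter endgame with no density/limit step, at the cost of applying the selection theorem to a multifunction built from $\Lambda$, which is only an $L^1(\mathcal L^1\times\pi)$-class, so $B_k$ and the selection are only $\pi$-measurable rather than Borel --- manageable, as you note, by fixing a jointly measurable representative and modifying on a $\pi$-null set, but a heavier measurability burden than the paper carries. Both arguments rest on the same compatibility of $\Lambda$ with restriction and pushforward, and you are right that this is the real content of the step: your justification (locality of \eqref{Equa:simple_plan_ct} for simple $v$, stability of the approximating sequences under restriction, and Lemma \ref{Lem:eta_gamma_Lambda}) is exactly the identification the paper uses implicitly, via Corollary \ref{Cor:Lambda_pi_curve}, when it writes the circuitation of $R_{s_n,t_n}(\gamma)$ as $\int_{s_n}^{t_n}\Lambda_\gamma\dot{|\gamma|}\,d\tau$ with $\Lambda_\gamma$ computed for the original plan.
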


\begin{proof}
The proof is given in 3 steps. %: the main difficulty is to construct a Borel maps which gives a sub-loop of $\gamma$.

{\it Step 1: construction of a countable Borel set of loops.}
The following argument is the same as in \cite[Proposition 5.2.7]{Sri:courseborel}: we give it for completeness. The set
\begin{equation*}
F = \big\{ (\gamma,s,t) : \gamma(s) = \gamma(t) \big\} \subset \Gamma_c \times [0,1]^2
\end{equation*}
is closed, and moreover
\begin{equation*}
\Big\{ \gamma : \gamma(s) = \gamma(t) \ \text{for some} \ (s,t) \in \bar B^2_r((\bar s,\bar t)) \Big\}
\end{equation*}
is closed in $\Gamma$. Hence $F : \Gamma_c \to [0,1]^2$ is a Borel measurable multifunction with $F(\gamma)$ closed. For every $\bar B^2_{2^{-k}}((\bar s_m,\bar t_m))$, $s_m,t_m \in \Q$ and $k \in \N$, there is a Borel selection \cite[Theorem 5.2.1]{Sri:courseborel}
\[
L_{k,m} : A_{k,m} = F^{-1}(\bar B^2_{2^{-k}}((\bar s_m,\bar t_m)))  \to [0,1]^2, \quad L_{k,m}(\gamma) = (s_{k,m},t_{k,m}) \in F(\gamma) \cap \bar B^2_{2^{-k}}((\bar s_m,\bar t_m)),
\]
By construction, the graphs of the functions $L_{k,m}$ are dense in $F(\gamma)$ for each $\gamma$, i.e.
\begin{equation*}
F(\gamma) = \clos \big\{ L_{k,m}(\gamma), \gamma \in A_{k,m} \big\}.
\end{equation*}
By reordering, we will denote the functions $L_{m,k}$ and domains $A_{m,k}$ with $L_n$, $A_n$ respectively.
%
%
%by standard selection principles (e.g. \cite[Theorem 5.2.1]{Sri:courseborel}) we can find a Borel map (it is actually of $F_\sigma$-regularity) %, and also $s=t$ is admissible)
%\[
%L_1 : \Gamma_c = A_1 \to [0,1]^2, \quad L_1(\gamma) = (s_1,t_1) \in F(\gamma).
%\]
%Repeating the argument for
%$$
%F \setminus \Big\{ (\gamma,s,t) : \gamma \in A_1, \big| (s,t) - (s_1,t_1) \big| \geq 2^{-m} \Big\},
%$$
%again by \cite[Theorem 5.2.1]{Sri:courseborel} we can construct a new Borel selection
%\[
%L_2 : \Gamma_c \supset A_2  \to [0,1]^2, \quad L_2(\gamma) = (s_2,t_2) \in F(\gamma).
%\]
%Proceeding in this way and being the space $\Gamma_c \times [0,1]^2$ separable, we obtain at most countably many Borel functions
%\[
%L_n : \Gamma_c \supset A_n \to [0,1]^2, \quad L_n(\gamma) = (s_n,t_n) \in F(\gamma),
%\]
%such that $A_n$ is Borel and
%\begin{equation*}
%F = \clos \big\{ \Graph L_n \big\}.
%\end{equation*}
%In the remaining of the proof we will rely on the Borel functions $A_n \ni \gamma \mapsto s_n(\gamma),t_n(\gamma) \in [0,1]$, and for shortness we will write only $s_n,t_n$.

{\it Step 2: negligibility of the set of bad loops.}
%Let $\{t_k\} \subset [0,1]$ be an enumeration of rational numbers $\Q$. If $\pi \in \Gamma_c$ is a test plan, we can further partition $A_n = \Dom(L_n)$ into Borel sets $A_{n,k}$ defined by
%\begin{equation*}
%A_{n,k} = \Bigg\{ \gamma \in A_n : t_k < s_n(\gamma), \int_{t_k}^{s_n} \Lambda_\gamma(t) \dot{|\gamma|}(t) dt 
%\end{equation*}
%
If $\pi \in \mathcal P(\Gamma_c)$ is a test plan, for each map $L_n(\gamma) = (s_n,t_n)$ define the transport plan
\begin{equation*}
\pi_n = (R_{s_n,t_n})_\sharp (\pi \llcorner_{A_n}), \quad A_n = \Dom(L_n).
\end{equation*}
Being a restriction, clearly $\pi_n \in \adm(\mu)$. In particular, it follows by Assumption \ref{Def:direc_fidl} that
\begin{equation*}
\pi_n \big( \{ \gamma \notin \mathcal I_c \} \big) = \pi \big( \big\{ \gamma \in A_n : R_{s_n,t_n}(\gamma) \notin \mathcal I_c \big\} \big) = 0.  
\end{equation*}
By $\sigma$-additivity
\begin{equation*}
\pi \big( \big\{ \gamma \in A_n : \exists n \big( R_{s_n,t_n}(\gamma) \notin \mathcal I_c \big) \big\} \big) = 0.
\end{equation*}

{\it Step 3: conclusion.}
The previous point gives that $\pi \in \adm(\mu)$ is concentrated on a set of curves $A_\pi$ such that for all $n \in \N$ it holds $R_{s_n,t_n}(\gamma) \in \mathcal I_c$. Let now $(s_n,t_n),(\bar s,\bar t) \in F(\gamma)$, with $(s_n,t_n) \to (\bar s,\bar t)$. Then %by the restriction property
\begin{equation*}
2 \pi \Z \ni \int_{s_n}^{t_n} \Lambda_\gamma(\tau) \dot{|\gamma|}(\tau) d\tau \to \int_{\bar s}^{\bar t} \Lambda_\gamma(\tau) \dot{|\gamma|}(\tau) d\tau,
\end{equation*}
which gives that every subcurve of $\gamma \in A_\pi$ belongs to $\mathcal I_c$, i.e. it is of integer circuitation.
%
%
%
%then we can write
%\begin{equation*}
%\Gamma F = \bigcup_n \bigcup_{\gamma \in A_n} \{\gamma\} \times \big( t^-_n(\gamma),t^+_n(\gamma) \big)
%\end{equation*}
%where the labeling $n$ is as follows: let $\{q_i\}_i \subset [0,1]$ be a dense sequence, and assign to each connected component $(t^-(\gamma),t^+(\gamma))$ of $\{ ?\}$the index
%\begin{equation}
%\label{Equa:reindex}
%n = \min \big\{ i : q_i \in \big( t^-(\gamma,\gamma'),t^+(\gamma,\gamma') \big) \big\}.
%\end{equation}
%Define then the sets
%\begin{equation*}
%A_n = \Big\{ (\gamma,\gamma') : n \ \text{is assumed for some interval in (\ref{Equa:interval})} \Big\},
%\end{equation*}
%and the function
%\begin{equation*}
%A_n \ni (\gamma,\gamma') \mapsto \big( t^-_n(\gamma,\gamma'),t^+_n(\gamma,\gamma') \big).
%\end{equation*}
%It is fairly easy to see that
%\begin{enumerate}
%\item $A_1$ is open, being $F$ closed;
%\item the set $\{(\gamma,\gamma') : q_i \notin F(\gamma,\gamma')\}$ is open, and then
%\[
%A_n \ \text{is relatively open in} \ \Gamma^2 \setminus \bigcup_{m < n} A_m;
%\]
%\item again being $F$ closed, the function $t^-_n(\gamma,\gamma')$ is u.s.c. in $A_n$, and the function $t^+_n(\gamma,\gamma')$ is l.s.c. in $A_n$.
%\end{enumerate}
%
\end{proof}

As a consequence of the previous proposition, the next lemma shows that we can construct the function $w$ along $\pi$-a.e. curve.

\begin{lemma}
\label{Lem:simple_zero}
Let $\gamma$ be a Lipschitz curve such that if $\gamma(s) = \gamma(t)$ then $R_{s,t} \gamma \in \mathcal I_c$. Then there exists a function $w_\gamma : \gamma([0,1]) \to \mathbb S^1$ such that
\begin{equation*}
\frac{d}{dt} w_\gamma(\gamma(t)) = i \Lambda_\gamma(t) \dot{|\gamma|}(t) w(\gamma(t)).
\end{equation*}
% every simple closed sub-curve $\gamma'$ of $\gamma$ has integer circuitation.
\end{lemma}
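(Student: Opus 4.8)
The plan is to build the candidate first on the parameter interval $[0,1]$ and then to check that it descends to a genuine function on the image $\gamma([0,1])$. Since $v \in L^1(T^*X)$, the function $t \mapsto \Lambda_\gamma(t)\dot{|\gamma|}(t)$ belongs to $L^1([0,1])$, so its primitive
\[
\theta(t) = \int_0^t \Lambda_\gamma(\tau)\dot{|\gamma|}(\tau)\,d\tau
\]
is real-valued (recall that $\Lambda_\gamma$ arises from differentiating real functions $f \in S^1(X)$ along $\gamma$) and absolutely continuous. I would then set $\tilde w(t) = e^{i\theta(t)}$, which takes values in $\mathbb S^1$, is absolutely continuous, and satisfies $\frac{d}{dt}\tilde w(t) = i\Lambda_\gamma(t)\dot{|\gamma|}(t)\tilde w(t)$ for a.e.\ $t$ by the chain rule. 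At this stage $\tilde w$ already solves the required ODE, but only as a function of the parameter $t$.

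The key step is to verify that $\tilde w$ depends only on the point $\gamma(t)$ and not on $t$, so that $w_\gamma(\gamma(t)) := \tilde w(t)$ is well defined on $\gamma([0,1])$. Suppose $\gamma(s) = \gamma(t)$ with $s \le t$. Then $R_{s,t}\gamma$ is a closed curve, and by hypothesis it lies in $\mathcal I_c$, i.e.\ it has integer circuitation. Invoking Corollary \ref{Cor:Lambda_pi_curve}, according to which $\Lambda_\gamma$ can be read as a function of position and is therefore invariant under reparametrization, the circuitation of $R_{s,t}\gamma$ equals $\int_s^t \Lambda_\gamma(\tau)\dot{|\gamma|}(\tau)\,d\tau$. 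Hence $\theta(t) - \theta(s) \in 2\pi\Z$, and consequently
\[
\tilde w(t) = e^{i\theta(s)}\, e^{i(\theta(t)-\theta(s))} = \tilde w(s).
\]
Thus $\tilde w$ is constant on each fibre $\gamma^{-1}(\{p\})$, and the assignment $w_\gamma(p) = \tilde w(t)$ for any $t$ with $\gamma(t) = p$ is unambiguous.

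Finally I would record that the resulting $w_\gamma : \gamma([0,1]) \to \mathbb S^1$ satisfies $\frac{d}{dt}w_\gamma(\gamma(t)) = i\Lambda_\gamma(t)\dot{|\gamma|}(t)w_\gamma(\gamma(t))$, since $w_\gamma(\gamma(t)) = \tilde w(t)$ by definition and $\tilde w$ already solves the ODE. The only genuine obstacle is the well-definedness of $w_\gamma$ on the image, and this is exactly what the intersection hypothesis $R_{s,t}\gamma \in \mathcal I_c$ is designed to supply; everything else is the elementary observation that the primitive of the integrand exponentiates to an $\mathbb S^1$-valued absolutely continuous solution. I would also note that identifying the circuitation of the restricted loop with the sub-interval integral $\int_s^t$ is precisely the computation carried out in Step 3 of Proposition \ref{Prop:self_inters}, so no new reparametrization argument is needed.
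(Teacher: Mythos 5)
Your proposal is correct and follows essentially the same route as the paper: define $w_\gamma(t)=e^{i\int_0^t \Lambda_\gamma \dot{|\gamma|}\,\mathcal L^1}$ and use the hypothesis $R_{s,t}\gamma\in\mathcal I_c$ to conclude that the phase jumps by a multiple of $2\pi$ at self-intersections, so the function descends to $\gamma([0,1])$. Your extra remarks (absolute continuity of the primitive, the identification of the circuitation of $R_{s,t}\gamma$ with $\int_s^t\Lambda_\gamma\dot{|\gamma|}$ via Corollary \ref{Cor:Lambda_pi_curve}) only make explicit steps the paper leaves implicit.
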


\begin{proof}
Define the function
\begin{equation*}
w_\gamma(t) = e^{i \int_0^t \Lambda_\gamma \dot{|\gamma|}  \mathcal L^1}.
\end{equation*}
If $\gamma(t) = \gamma(s)$, then by the assumption $R_{s,t} \gamma \in \mathcal I_c$ it holds %Proposition \ref{Prop:self_inters} it holds
\begin{equation*}
w_\gamma(t) = w_\gamma(s) e^{i \int_s^t \Lambda_\gamma \dot{|\gamma|} \mathcal L^1} = w_\gamma(s),
\end{equation*}
so that the function $w_\gamma$ can be actually defined on $\Graph(\gamma)$ by
\begin{equation*}
w_\gamma(x) = w_\gamma(t), \quad x = \gamma(t). \qedhere
\end{equation*}
\end{proof}

We next generalize Proposition \ref{Prop:self_inters} to closed subcurves of the set of curves where $\pi$ is concentrated: given a set $A \subset \Gamma_c$ define
\begin{equation*}
\gls{An} = \bigg\{ \gamma \in \Gamma_c : \exists \gamma_1,\dots,\gamma_n \in A \bigg( \Graph \gamma \subset \bigcup_i \Graph \gamma_i \bigg) \bigg\}.
\end{equation*}
%This set is made of Lipschitz curves which constructed

\begin{theorem}
\label{theo:neglinters}
Let $\Lambda$ be of integer circuitation. Every $\pi \in \adm(\mu)$ is concentrated on a set of curves $A_\pi$ such that $A_\pi(n) \subset \mathcal I_c$ for all $n$, i.e. every subloop of $A_\pi$ belongs to $\mathcal I_c$.
\end{theorem}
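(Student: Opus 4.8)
The plan is to reproduce the three–step scheme of Proposition \ref{Prop:self_inters}, but now on the product plan $\pi^{\otimes n}$ rather than on $\pi$ alone, so that the ``self–intersection'' of a single curve is replaced by the intersections occurring among an $n$–tuple of curves. The elementary building block is the gluing \eqref{Equa:ciclgam}: given curves $\gamma_1,\dots,\gamma_n$ and a cyclically consistent choice of arc endpoints, one patches together restrictions $R_{s_j,t_j}\gamma_{i_j}$ (traversed forward or backward) into a closed curve $\gamma''$, and one wants $\int \Lambda_{\gamma''}\in 2\pi\Z$. The circuitation being additive over the arcs, it suffices to control all such finite gluings.

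First I would fix, for each $n$, each arc–count $k$ and each combinatorial type $\sigma=(i_1,\dots,i_k)\in\{1,\dots,n\}^k$, the closed set
\begin{equation*}
F_\sigma=\Big\{\big(\gamma_1,\dots,\gamma_n,(s_1,t_1),\dots,(s_k,t_k)\big):\ \gamma_{i_j}(t_j)=\gamma_{i_{j+1}}(s_{j+1})\ \text{cyclically}\Big\}\subset \Gamma_c^{\,n}\times[0,1]^{2k},
\end{equation*}
whose closedness comes from continuity of the evaluation map. Exactly as in Step 1 of Proposition \ref{Prop:self_inters}, \cite[Theorem 5.2.1]{Sri:courseborel} yields countably many Borel selections of gluing data whose graphs are dense in $F_\sigma$. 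For each selection the assembling map $(\gamma_1,\dots,\gamma_n)\mapsto\gamma''$ is Borel, and the occupation estimate behind \eqref{Equa:curves_pi} gives
\begin{equation*}
\int_0^{L(\gamma'')}\ind_{\gamma''(\tau)\in A}\,\dot{|\gamma''|}(\tau)\,d\tau\ \le\ \sum_{j=1}^{k}\int_{s_j}^{t_j}\ind_{\gamma_{i_j}\in A}\,\dot{|\gamma_{i_j}|}\,,
\end{equation*}
so that the push–forward of $\pi^{\otimes n}$, restricted to the domain of the selection, is an admissible test plan on $\Gamma_c$ (with compressibility at most $kC$). Assumption \ref{Def:direc_fidl} then forces the glued loop into $\mathcal{I}_c$ for $\pi^{\otimes n}$–a.e. tuple; taking the countable union over selections, over $\sigma$, over $k$ and over $n$, and finally upgrading ``selected gluings'' to ``all gluings'' by the density of the selections together with the continuity $\int_{s_n}^{t_n}\Lambda\to\int_{\bar s}^{\bar t}\Lambda$ and the closedness of $2\pi\Z$ (Step 3 of Proposition \ref{Prop:self_inters}), I obtain a Borel set which is $\pi^{\otimes n}$–conull for every $n$ and on whose tuples every subloop lies in $\mathcal{I}_c$.

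The main obstacle is the last passage, from ``$\pi^{\otimes n}$–almost every $n$–tuple is good'' to the existence of a single $\pi$–conull set $A_\pi$ with $A_\pi(n)\subset\mathcal{I}_c$, i.e. all of whose $n$–tuples are good. This is not a mere Fubini statement: a symmetric $\pi^{\otimes2}$–null set of bad pairs need not be avoidable by a conull square (the rotation set $\{|x-y|=\alpha\}$ on the torus is the model obstruction). What rescues the argument is the additive structure of the circuitation: by Lemma \ref{Lem:simple_zero} each good curve carries a phase $w_\gamma$ defined on its image up to a constant, the crossing condition for a pair is precisely that $w_\gamma/w_{\gamma'}$ be constant along the common points, and the $n$–curve condition is the vanishing of the associated additive cocycle around every cycle of curves. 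I would therefore phrase goodness as a measurable cocycle that is a coboundary $\pi^{\otimes n}$–a.e., and use the separability of $\Gamma$ (countably many selections already exhaust the intersections) to produce, on a $\pi$–conull set, a global phase compatible with all the $w_\gamma$; concretely one discards the $\pi$–null set on which the cocycle fails to agree with a fixed measurable representative. Checking that this removal can be performed simultaneously for all $n$, and that the surviving set $A_\pi$ still carries $\pi$, is where the separability hypothesis on $X$ (hence on $\Gamma$) is genuinely used, and is the technical heart of the proof; the combinatorial reduction and the admissibility of the push–forwards are by comparison routine.
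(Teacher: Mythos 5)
Your reduction to glued loops, the Borel selection of gluing data, and the admissibility of the push-forward plans are all sound and follow the same lines as the paper's own Steps 1--2. The gap sits exactly where you place "the technical heart": the passage from "$\pi^{\otimes n}$-a.e.\ tuple is good" to a single $\pi$-conull set $A_\pi$ all of whose tuples are good. Your proposed fix --- interpret goodness as an additive cocycle built from the phases $w_\gamma$ of Lemma \ref{Lem:simple_zero}, and use separability to produce a measurable global phase compatible with all $w_\gamma$ on a conull set --- requires a statement strictly stronger than the theorem, and that statement is false in general. The existence of a measurable phase compatible with the $w_\gamma$'s is precisely the existence of the solution $w$, which the paper constructs only later (Proposition \ref{Prop:exist_w}, Theorem \ref{Theo:w_final}) and only under the additional Assumption \ref{Ass:disi_cons} of strong consistency of the disintegration. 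Example \ref{Ex:notgoodsol} --- an irrational-rotation cocycle, i.e.\ exactly the model obstruction you yourself invoke --- shows that a.e.-triviality of the cocycle does not yield a measurable coboundary: there the phases exist along each orbit and the circuitation condition holds, yet no measurable global phase exists because $z \cdot (\alpha,\beta) \notin \Z$ for $z \neq 0$ rules out a measurable eigenfunction of the rotation. So the lemma your last step needs can fail while the theorem remains true; separability alone cannot bridge this, and invoking the global phase here would in any case be circular, since its construction in the paper takes Theorem \ref{theo:neglinters} as input.

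What the paper does instead is strengthen the hypothesis rather than the conclusion: it establishes nullity of the bad set $\tilde{\mathcal C}(n)$ not merely under $\pi^{\otimes n}$ but under \emph{every} sub-coupling $\xi \in \Pi^\leq(\pi,\dots,\pi)$, i.e.\ every positive measure on $\mathcal C(n)$ with $(\mathtt p_i)_\sharp \xi \leq \pi$; this costs nothing, because the admissibility estimate for the glued plan only uses domination of the marginals, not product structure. The gain is decisive: in your torus example the set $\{|x-y|=\alpha\}$ is product-null but carries the sub-coupling $(\Id,\Id+\alpha)_\sharp \mathcal L^1$ with full mass, so "null for every sub-coupling" genuinely excludes the rotation-type obstruction. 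Kellerer's duality theorem \cite{kel:duality} then converts $\sup_\xi \xi(\tilde{\mathcal C}(n)) = 0$ into the existence of $\pi$-negligible sets $B_i$ with $\tilde{\mathcal C}(n) \subset \bigcup_i \mathtt p_i^{-1}(B_i)$, and removing $\bigcup_{n,i} B_i$ yields the conull set $A_\pi$ in one stroke. This duality step is the missing idea; without it (or an equivalent substitute) your argument stops at the obstruction you correctly identified.
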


\begin{proof}
The proof is given in two steps. The first one illustrates the procedure in a simple case, while the second is the general case.

{\it Step 1: the case of two curves.}
Consider the set $(\Gamma_c)^2 = \Gamma_c \times \Gamma_c$ and its closed subset
\begin{equation*}
\mathcal C(2) = \Big\{ (\gamma_1,\gamma_2) \in (\Gamma_c)^2 : \gamma_1([0,1]) \cap \gamma_2([0,1]) \not= \emptyset \Big\}.
\end{equation*}
In particular the multifunction $F(\gamma_1,\gamma_2)$ with graph
\begin{equation*}
F = \Big\{ (\gamma_1,\gamma_2,t_1,t_2) \in \mathcal C(2) \times [0,1]^2 : \gamma_1(t_1) = \gamma_2(t_2) \Big\}
\end{equation*}
is Borel measurable and $F(\gamma_1,\gamma_2)$ is closed, so that by \cite[Theorem 5.2.1]{Sri:courseborel} there is a Borel selection %(actually of $F_\sigma$-regularity)
\begin{equation*}
L : \mathcal C(2) \to [0,1]^2, \quad L(\gamma_1,\gamma_2) = (t_1,t_2) \ \text{such that $\gamma_1(t_1) = \gamma_2(t_2)$}. 
\end{equation*}
We thus can define the curve
\begin{equation}
\label{Equa:mergiloop}
\gamma'(\gamma_1,\gamma_2;t) = \begin{cases}
\gamma_1(2t) & t \in [0,t_1/2] \\
\gamma_2(t_2 + 2t-t_1 \mod 1) & t \in (t_1/2,t_1/2+1/2), \\
\gamma_1(2t - 1) & t \in [t_1/2+1/2,1].
\end{cases}
\end{equation}
Clearly the map $(\gamma_1,\gamma_2) \to \gamma'(\gamma_1,\gamma_2)$ is Borel. Up to reparametrization, the curve $\gamma'(\gamma_1,\gamma_2)$ is the closed curve obtained by moving along the first curve and then along the second one.

Consider now the set of positive measures
\begin{equation*}
\Pi^\leq(\pi,\pi) = \Big\{ \xi \in \mathcal M^+(\mathcal C(2)) : (\mathtt p_1)_\sharp \xi \leq \pi, (\mathtt p_2)_\sharp \xi \leq \pi \Big\},
\end{equation*}
where $\pi \in \adm(\mu)$ is the test plan in the statement. We can define the new test plan
\begin{equation*}
\pi' = (\gamma')_\sharp \xi, \quad \xi \in \Pi^\leq(\pi,\pi).
\end{equation*}
It is fairly easy to see that $\pi' \in \adm(\mu)$ because
\begin{equation*}
\begin{split}
\int \bigg[ \int_0^1 \phi(\gamma(t)) \dot{|\gamma|}(t) dt \bigg] \pi'(d\gamma) &= \sum_{i=1,2} \int \bigg[ \int_0^1 \phi(\gamma(t)) \dot{|\gamma|}(t) dt \bigg] (\mathtt p_i)_\sharp \xi(d\gamma) \\
&\leq 2 \int \bigg[ \int_0^1 \phi(\gamma(t)) \dot{|\gamma|}(t) dt \bigg] \pi(d\gamma), %\leq 2 C \int \phi \mu,
\end{split}
\end{equation*}
%by the same computation of the proof of Lemma \ref{Lem:inverttime},
and since $\gamma' \in \Gamma_c$ then $\pi'$ is supported on $\Gamma_c$.

Proposition \ref{Prop:self_inters} implies that the set
\begin{equation*}
\tilde{\mathcal C}(2) = \Big\{ (\gamma_1,\gamma_2) \in \mathcal C(2) : \gamma'(\gamma_1,\gamma_2) \ \text{has a self-intersection with non integer circuitation} \Big\}
\end{equation*}
is negligible for every $\xi \in \Pi^\leq(\pi,\pi)$. We recall now the following result \cite[Theorem 2.19 and Proposition 3.5]{kel:duality}: if $B$ is Borel (or analytic)
\begin{equation*}
%\label{Equa:kellerer_1}
\sup_{\xi \in \Pi^{\leq}(\pi,\pi)} \xi(B) = \min \Big\{ \pi(B_1) + \pi(B_2):\, B \subset (B_1 \times \Gamma) \cup (\Gamma \times B_2) \Big\}.
\end{equation*}
In our case, we have observed that the l.h.s. is $0$ when $B = \tilde{\mathcal C}(2)$, and then there are set $B_1$, $B_2$ negligible w.r.t. $\pi$ such that for every $\gamma_1,\gamma_2 \in \Gamma_c \setminus (B_1 \cup B_2)$ every subloop of the curve $\gamma'(\gamma_1,\gamma_2)$ has integer circuitation.

This proves the theorem for $n=2$.

{\it Step 2: the general case}.
%Assume that the statement is true up to $n-1$, and d
Define the closed set
\begin{equation*}
\gls{calCn} = \bigg\{ (\gamma_1,\dots,\gamma_n) \in (\Gamma_c)^n : \bigcup_i \Graph \gamma_i \ \text{is connected} \bigg\}.
\end{equation*}
Starting with $\gamma_1$, as in the previous step we can join it to some curve $\gamma_i$, $i \not= 1$, by formula \eqref{Equa:mergiloop}: call this curve $\tilde \gamma_2$. Using the assumption that $\cup_i \Graph \gamma_i$ is connected, we can repeat the process by adding to $\tilde \gamma_2$ a new curve $\gamma_j$, $j \not= 1,i$, as in \eqref{Equa:mergiloop}, obtaining a new curve $\tilde \gamma_3$. Proceeding by adding curves, after $n$ steps we obtain a single curve $\tilde \gamma$: it is fairly easy to see that since each merging operation is Borel, so the map $(\gamma_1,\dots,\gamma_n) \to \tilde \gamma(\gamma_1,\dots,\gamma_n)$ is Borel too.

As in the previous step, for every
\begin{equation*}
\xi \in \Pi^\leq \big( \underbrace{\pi,\dots,\pi}_{n \ \text{times}} \big) = \big\{ \xi \in \mathcal M^+(\mathcal C(n)), (\mathtt p_i)_\sharp \xi \leq \pi \big\}
\end{equation*}
we can construct the test plan
\begin{equation*}
\pi' = \big( \tilde \gamma(\gamma_1,\dots,\gamma_n) \big)_\sharp \xi,
\end{equation*}
so that we deduce that the set
\begin{equation*}
\tilde{\mathcal C}(n) = \Big\{ (\gamma_1,\dots,\gamma_n) \in \mathcal C(n) : \gamma' \ \text{has a self intersection with non integer circuitation} \Big\}
\end{equation*}
is negligible for every $\xi \in \Pi^\leq(\pi,\dots,\pi)$. We now use \cite[Lemma 1.8, Theorems 2.14 and 2.21]{kel:duality}: they state that if $B$ is Borel (or analytic) then
\begin{equation*}
%\label{Equa:kellerer_1}
\sup_{\xi \in \Pi^{\leq}(\pi,\dots,\pi)} \xi(B) = \min \bigg\{ \sum_{i=1}^n \int f_i \pi:\, \ind_B(\gamma_1,\dots,\gamma_n) \leq \sum_{i=1}^n f_i(\gamma_i), 0 \leq f_i \leq 1 \bigg\}.
\end{equation*}
In our case the left side is $0$, so that we can take the sets $B_i = \{f_i > 0\}$ to deduce that there are $\pi$-negligible sets $B_i$ such that
\begin{equation*}
\tilde{\mathcal C}(n) \subset \bigcup_i (\mathtt p_i)^{-1}(B_i), \quad \text{where} \ \mathtt p_i(\gamma,\dots,\gamma_n) = \gamma_i.
\end{equation*}
In particular in the complement of $\tilde B_n = \cup_i B_i$ it holds that every subloop of $\tilde \gamma(\gamma_1,\dots,\gamma_n)$ belongs to $\mathcal I_c$. Removing the negligible set $\cup_n \tilde B_n$ we obtain the set $A_\pi$ of the statement.
\end{proof}

Given a sequence of test plan $\pi_n$, $n \in \N$, it is immediate to see that
\begin{equation}
\label{Equa:countabl_pi}
\frac{1}{\sum_n \frac{2^{-n}}{C_n}} \sum_n \frac{2^{-n}}{C_n} \pi_n \in \adm(\mu), %\quad \text{where} \ \int e(t)_\sharp \pi dt \leq C_n.
\end{equation}
where $C_n$ is the constant for $\pi_n$ in \eqref{Equa:curves_pi} (the constant in front the sum is for normalization). Hence the previous theorem gives the following statement.

\begin{corollary}
\label{Cor:count_m}
If $\{\pi_n\}_{n \in \N} \subset \adm(\mu)$ is a countable family of $\mu$-test plans, then there exist Borel sets $A_{\pi_n}$ such that $\pi_n(A_{\pi_n}) = 1$ and it holds
\begin{equation*}
\bigg\{ \gamma \in \Gamma_c : \exists \gamma_1 \in A_{\pi_1},\dots,\gamma_n \in A_{\pi_n} \bigg( \Graph \gamma \subset \bigcup_i \Graph \gamma_i \bigg) \bigg\} \subset \mathcal I_c.
\end{equation*}
\end{corollary}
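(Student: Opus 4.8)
The plan is to reduce the statement for the whole countable family $\{\pi_n\}_n$ to a single application of Theorem \ref{theo:neglinters}, by bundling the $\pi_n$ into one \emph{master} test plan that dominates each of them up to a positive factor. Concretely, I would set
\[
\pi = \frac{1}{Z}\sum_{n} \frac{2^{-n}}{C_n}\pi_n, \qquad Z = \sum_n \frac{2^{-n}}{C_n},
\]
where $C_n$ is the compressibility constant of $\pi_n$ in \eqref{Equa:curves_pi}. By the observation \eqref{Equa:countabl_pi}, $\pi \in \adm(\mu)$: each summand contributes $\tfrac{2^{-n}}{C_n}\cdot C_n\mu = 2^{-n}\mu$ to the compressibility estimate, so the series sums to give compressibility constant $1/Z$, and $\pi(\Gamma)=1$ by the choice of normalisation.

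Next I would apply Theorem \ref{theo:neglinters} to this single plan $\pi$, obtaining a Borel set $A_\pi$ with $\pi(A_\pi)=1$ and $A_\pi(n)\subset\mathcal I_c$ for every $n$, i.e. every subloop assembled from finitely many curves of $A_\pi$ has integer circuitation. The crucial point is that $\pi$ dominates each $\pi_n$: since $\pi \geq \tfrac{1}{Z}\tfrac{2^{-n}}{C_n}\pi_n$ with $\tfrac{2^{-n}}{C_n}>0$, the equality $\pi(A_\pi^c)=0$ forces $\pi_n(A_\pi^c)=0$, hence $\pi_n(A_\pi)=1$ for every $n$. I would therefore simply take $A_{\pi_n}:=A_\pi$ for all $n$, which satisfies $\pi_n(A_{\pi_n})=1$ as required.

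Finally, with this uniform choice the set appearing in the statement---closed curves $\gamma$ whose graph is covered by $\Graph\gamma_1\cup\dots\cup\Graph\gamma_n$ with $\gamma_i\in A_{\pi_i}=A_\pi$---is, for each $n$, exactly $A_\pi(n)$, so the full set is $\bigcup_n A_\pi(n)$. Theorem \ref{theo:neglinters} then gives $\bigcup_n A_\pi(n)\subset\mathcal I_c$, which is the claim.

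I do not expect a genuine obstacle here: the corollary is really a repackaging of the single-plan Theorem \ref{theo:neglinters} through a convergent reweighting of the $\pi_n$. The only points demanding care are verifying that the weights $2^{-n}/C_n$ simultaneously ensure (i) admissibility of the master plan, i.e. finiteness of its compressibility estimate, and (ii) strict positivity of each weight, so that the conull set $A_\pi$ for $\pi$ is conull for every $\pi_n$; both are immediate from the definitions. A secondary sanity check is that $\mathcal I_c$ and the sets $A_\pi(n)$ are by construction contained in $\Gamma_c$, so the conclusion remains meaningful even if some $\pi_n$ are not themselves supported on closed curves.
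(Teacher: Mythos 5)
Your proposal is correct and is essentially the paper's own argument: the paper proves this corollary precisely by forming the normalized weighted sum \eqref{Equa:countabl_pi} of the $\pi_n$ (an admissible master plan) and applying Theorem \ref{theo:neglinters} to it, with each $\pi_n$ inheriting the conull set $A_\pi$ by absolute continuity. Your write-up just makes explicit the details (admissibility of the sum, positivity of the weights, and the identification of the set in the statement with $\bigcup_n A_\pi(n)$) that the paper leaves implicit.
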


\section{A simple case: $x \in \R^2$, $\mu = f^2 \mathcal L^2/\|f\|_2^2$ with $f \in W^{1,2}$}
\label{S:case_R2}

As a preliminary analysis, we perform the construction of a function $w : \R^2 \mapsto \mathbb S^1$ such that
\begin{equation*}
D(fw) = Df w + f v,
\end{equation*}
where $v \in \R^d$ is a (cotangent) vector field defined $f$-a.e. having integer circuitation, and $f \in W^{1,2}(\R^2,\mathcal L^d)$: the integer circuitation assumption is
\begin{equation*}
\int_0^1 v(\gamma(t))\cdot \dot \gamma(t) dt \in 2\pi\Z
\end{equation*}
for $\pi$-a.e. $\gamma$, with $\pi$-test plan concentrated on $\Gamma_c(\R^2)$ for the measure $\mu = f^2 \mathcal L^2/\|f\|_2^2$. With some additional technicalities, a similar analysis can be adapted to the $d$-dimensional case: the advantage in the $2d$-case is that the construction of the test plans for the measure $\mu = f^2 \mathcal L^2$ is particularly easy.

%By local coordinates, this result can be immediately adapt.

%\subsection{The case $d=2$}
%\label{Ss:case_R2}

Consider a Lebesgue point for $f$ and $Df$: let us assume for definiteness
\begin{equation}
\label{Equa:Lebesgue_f_Df_1}
\fint_{B^2_r(0)} |f - 1|^2 \mathcal L^d, \fint_{B^2_r(0)} |Df - 1|^2 < \epsilon^2, \quad \epsilon = o(r).
\end{equation}
By standard density argument, for the directions $\mathbf e_1,\mathbf e_2$ along the coordinate axis there are segments
$$
\ell_{k,y} = \big\{ x \in \R^2 : \mathbf e_k \cdot x \in [-r,r], \mathtt p_{\mathbf e_k^\perp} x = y \big\}, \quad y \in K_k \subset [-r,r] \ \text{compact}, %, k \in  of length $r \\ 1$ such that
$$
such that
\begin{equation*}
\mathcal L^{1}(K_k) \geq r, \quad \int_{\ell_{k,y}} |f-1|^2 \mathcal H^1, \int_{\ell_{k,y}} |\partial_{x_k}f - 1|^2 \mathcal H^1 < C \epsilon^2 r.
\end{equation*}
In particular it follows that $|f| \geq 1/2$ $\mathcal H^1$-a.e. on all the segments $\{\ell_{k,y}, y \in K_k, k = 1,2\}$.

Consider the map $\R^4 \ni (y,y') = (y_1,y_2,y_1',y_2') \mapsto \gamma_{(y,y')}(t) \in \Gamma_c$ where
\begin{equation}
\label{Equa:curve_gamma_yy'}
\gamma_{(y,y')} = \overrightarrow{[y,(y_1',y_2)]} \cup \overrightarrow{[(y_1',y_2),y']} \cup \overrightarrow{[y',(y_1,y_2')]} \cup \overrightarrow{[(y_1,y_2'),y]},
\end{equation}
where $\overrightarrow{[x,y]}$ is the geodesic connecting $x,y$: $\gamma_{(y,y')}$ is the oriented boundary of the rectangle with opposite corners $y,y'$.

\begin{lemma}
\label{Lem:test_R2}
For all $g \in L^\infty(\R^4,\mathcal L^4)$, $\|g\|_1 = 1$ with bounded support, the measure $\pi = (\gamma_{(y,y')})_\sharp (g\mathcal L^4)$ is a test plan in $(\R^2,\mathcal L^2)$.
\end{lemma}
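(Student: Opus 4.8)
The plan is to verify the compressibility inequality of Definition \ref{Def:admissi} directly, taking $\mathcal L^2$ as reference measure. First I would record that $\pi$ is a bona fide probability measure on $\Gamma_c$: the assignment $(y,y') \mapsto \gamma_{(y,y')}$ is Borel, being a concatenation of four geodesic segments depending continuously on their endpoints, and $g\,\mathcal L^4 \in \mathcal P(\R^4)$ (here $g \geq 0$, $\|g\|_1 = 1$). Since Definition \ref{Def:admissi} is reparametrization invariant, I parametrize each $\gamma_{(y,y')}$ by arc length, so $\dot{|\gamma|} \equiv 1$ and, writing $Q_{y,y'}$ for the closed rectangle with opposite corners $y,y'$,
\begin{equation*}
\int_0^{L(\gamma_{(y,y')})} \ind_{\gamma_{(y,y')}(t) \in A}\, dt = \mathcal H^1\big( \partial Q_{y,y'} \cap A \big),
\end{equation*}
using that $\partial Q_{y,y'}$ is a simple closed curve traversed once (the degenerate configurations $y_1 = y_1'$ or $y_2 = y_2'$ form an $\mathcal L^4$-null set and are discarded). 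It then suffices to bound
\begin{equation*}
I(A) = \int_{\R^4} \mathcal H^1\big( \partial Q_{y,y'} \cap A \big)\, g(y,y')\, \mathcal L^4(d(y,y'))
\end{equation*}
by a constant multiple of $\mathcal L^2(A)$.

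Next I would decompose the boundary into its four sides and treat the bottom horizontal side (at height $y_2$, from $x_1 = y_1$ to $x_1 = y_1'$) as representative. Its contribution is
\begin{equation*}
I_1(A) = \int_{\R^4} g(y_1,y_2,y_1',y_2') \bigg[ \int_\R \ind_A(s,y_2)\, \ind_{s \in [y_1 \wedge y_1',\, y_1 \vee y_1']}\, ds \bigg]\, \mathcal L^4(d(y,y')).
\end{equation*}
By Tonelli I exchange the order of integration, holding the point $(s,y_2)$ fixed and integrating out the three free rectangle parameters $y_1,y_1',y_2'$:
\begin{equation*}
I_1(A) = \int_{\R^2} \ind_A(s,y_2) \bigg[ \int_{\R^3} g(y_1,y_2,y_1',y_2')\, \ind_{s \in [y_1 \wedge y_1',\, y_1 \vee y_1']}\, dy_1\, dy_1'\, dy_2' \bigg]\, ds\, dy_2.
\end{equation*}

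The key estimate is that, since $\supp g$ is bounded, say $\supp g \subset [-R,R]^4$, and $g \in L^\infty$, the inner bracket is bounded uniformly in $(s,y_2)$ by $\|g\|_\infty (2R)^3 =: C_g < \infty$ (dropping the constraint indicator only enlarges it). Hence $I_1(A) \leq C_g\, \mathcal L^2(A)$. The top horizontal side and the two vertical sides are handled identically after permuting the roles of the coordinates, each contributing at most $C_g\,\mathcal L^2(A)$, so that $I(A) \leq 4 C_g\, \mathcal L^2(A)$, giving the compressibility bound with $C = 4\|g\|_\infty (2R)^3$ and thus $\pi \in \adm(\mathcal L^2)$.

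The only real point of care is the Tonelli rearrangement together with the verification that, once the free rectangle parameters are integrated out, the resulting kernel is genuinely bounded: this is exactly where the hypotheses $g \in L^\infty$ and $\supp g$ bounded enter, and the estimate would fail without them. The measurability of $(y,y') \mapsto \gamma_{(y,y')}$ and the discarding of the null set of degenerate rectangles are routine.
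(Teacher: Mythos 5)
Your proof is correct, and it verifies exactly the inequality \eqref{Equa:curves_pi} required by Definition \ref{Def:admissi}. It shares with the paper's proof the reduction to the four sides of the rectangle and the essential use of $g \in L^\infty$ with bounded support, but the key estimate is carried out differently. The paper fixes the time parameter $t$ and bounds the pushforward of $g\,\mathcal L^4$ under the evaluation map of a single side, $(y,y') \mapsto ((1-t)y_1 + t y_1',\, y_2)$, via the linear change of variables $(z_1,z_1') = ((1-t)y_1+ty_1',\, -ty_1 + (1-t)y_1')$, whose Jacobian $(1-t)^2 + t^2 \in [1/2,1]$ is bounded away from zero uniformly in $t$; integrating out the remaining variables over the bounded support gives $e(t)_\sharp \pi \leq C\|g\|_\infty\big(\diam(\supp g)\big)^2 \mathcal L^2$ uniformly in $t$, from which \eqref{Equa:curves_pi} follows since the speed of each side is bounded by $\diam(\supp g)$. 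You instead parametrize by arc length, identify the inner integral of \eqref{Equa:curves_pi} with the occupation measure $\mathcal H^1(\partial Q_{y,y'} \cap A)$, and bound the resulting integral by Tonelli, freezing a point $(s,y_2)$ of the side and integrating out the three free rectangle parameters. This avoids any change of variables or Jacobian bound and attacks the time-integrated quantity of Definition \ref{Def:admissi} directly, which is arguably more elementary; what the paper's fixed-time computation buys in exchange is the stronger, uniform-in-$t$ bound $e(t)_\sharp \pi \leq C \mathcal L^2$, i.e. the second condition in \eqref{Equa:variat}, which your argument does not yield (nor need). Both routes consume the hypotheses $g \in L^\infty$ and $\supp g$ bounded at the same point and produce a constant of the same form.
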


\begin{proof}
Since $\gamma_{(y,y')}$ is the union of 4 curves, it is enough to prove the statement for a single part, for example $\overrightarrow{[y,(y_1',y_2)]}$: in this case
\begin{equation*}
\begin{split}
\int \phi \big( (1-t)y_1 + ty_1',y_2 \big) g(y,y') dydy' &= \int \phi(z_1,y_2) g(z,y') ((1-t)^2 + t^2) dzdy' \\
&\leq C \|g\|_\infty \big( \diam(\supp g) \big)^2 \int \phi(z_1,y_2) dz_1 dy_2,
\end{split}
\end{equation*}
where
\begin{equation*}
\left( \begin{array}{c}
z_1 \\ z_1' \end{array} \right) = \left[ \begin{array}{cc} 1-t & t \\ -t & 1-t \end{array} \right] \left( \begin{array}{c}
y_1 \\ y_1' \end{array} \right),
\end{equation*}
which shows that the evaluation of $\pi$ is $\leq C(g) \mathcal L^2$.
\end{proof}

Using the same ideas of the first part of the proof of Theorem \ref{theo:neglinters} applied to the measures
$$
\xi \in \Pi^\leq \big( \mathcal L^2 \llcorner_{K_1 \times K_2}, \mathcal L^2 \llcorner_{K_1 \times K_2} \big),
$$
we deduce the following corollary.

\begin{corollary}
\label{Cor:every_traj_good}
There is a $\mathcal L^2$-conegligible set $I_0 \subset K_1 \times K_2$ such that for every $y,y' \in I_0$ the curve $\gamma_{(y,y')}$ has integer circuitation.
\end{corollary}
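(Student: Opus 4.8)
The plan is to copy Step~1 of the proof of Theorem \ref{theo:neglinters}, replacing the two-curve merging map $\gamma'(\gamma_1,\gamma_2)$ by the explicit rectangular loop $(y,y')\mapsto\gamma_{(y,y')}$ of \eqref{Equa:curve_gamma_yy'}. Write $\lambda=\mathcal L^2\llcorner_{K_1\times K_2}$, set
\begin{equation*}
\Pi^\leq(\lambda,\lambda)=\big\{\xi\in\mathcal M^+\big((K_1\times K_2)^2\big):(\mathtt p_1)_\sharp\xi\leq\lambda,\ (\mathtt p_2)_\sharp\xi\leq\lambda\big\},
\end{equation*}
and for each such $\xi$ consider the candidate plan $\pi_\xi=(\gamma_{(\cdot,\cdot)})_\sharp\xi$. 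Since $(y,y')\mapsto\gamma_{(y,y')}$ is continuous, $\pi_\xi\in\mathcal P(\Gamma_c)$ is well defined and the bad set $B=\{(y,y'):\gamma_{(y,y')}\notin\mathcal I_c\}=\gamma_{(\cdot,\cdot)}^{-1}(\mathcal I_c^c)$ is Borel.

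The main step is that every $\pi_\xi$ is $\mu$-admissible. One cannot invoke Lemma \ref{Lem:test_R2} directly, since a general $\xi$ need not be absolutely continuous and the instantaneous evaluation $e(t)_\sharp(\dot{|\gamma|}\pi_\xi)$ can concentrate on a line for an exceptional $t$; but Definition \ref{Def:admissi} only asks for the bound \emph{after} integrating in $t$, and this survives. The contribution of the first side $\overrightarrow{[y,(y_1',y_2)]}$ to the admissibility integral is, for any Borel $A$,
\begin{equation*}
\int\mathcal H^1\big(\overrightarrow{[y,(y_1',y_2)]}\cap A\big)\,\xi(dy\,dy')=\int\bigg[\int_{y_1\wedge y_1'}^{y_1\vee y_1'}\ind_A(u,y_2)\,du\bigg]\xi(dy\,dy')\leq\int\bigg[\int_{-r}^r\ind_A(u,y_2)\,du\bigg](\mathtt p_1)_\sharp\xi(dy),
\end{equation*}
and since the last integrand depends on $y=(y_1,y_2)$ only through $y_2$, bounding $(\mathtt p_1)_\sharp\xi\leq\lambda\leq\mathcal L^2$ and integrating the now-free variable $y_1$ over $[-r,r]$ gives $2r\,\mathcal L^2(A)$. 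The three remaining sides are estimated identically (using the second marginal for the sides carrying $y'$), so $\nu:=\int_0^1 e(t)_\sharp(\dot{|\gamma|}\pi_\xi)\,dt\leq 8r\,\mathcal L^2$. Finally every loop $\gamma_{(y,y')}$ runs along the selected segments, on which $|f|\geq 1/2$ holds $\mathcal H^1$-a.e.; hence $\nu(\{|f|<1/2\})=0$, and on $\{|f|\geq 1/2\}$ one has $\mathcal L^2\leq 4\|f\|_2^2\,\mu$, giving $\nu\leq 32\,r\,\|f\|_2^2\,\mu$ and $\pi_\xi\in\adm(\mu)$.

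With admissibility secured the conclusion is verbatim. Being supported on $\Gamma_c$, each $\pi_\xi$ satisfies $\pi_\xi(\mathcal I_c^c)=0$ by Assumption \ref{Def:direc_fidl}, so $\xi(B)=\pi_\xi(\mathcal I_c^c)=0$ for \emph{every} $\xi\in\Pi^\leq(\lambda,\lambda)$; here the circuitation of the planar loop is the genuine line integral $\int_0^1 v(\gamma(t))\cdot\dot\gamma(t)\,dt$, which is intrinsic to $\gamma$ by Corollary \ref{Cor:Lambda_pi_curve} (the curves live on good segments, where $v$ is defined $\mathcal H^1$-a.e.) and thus independent of the chosen $\xi$. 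Applying the Kellerer duality \cite[Theorem 2.19 and Proposition 3.5]{kel:duality} as in Theorem \ref{theo:neglinters},
\begin{equation*}
0=\sup_{\xi\in\Pi^\leq(\lambda,\lambda)}\xi(B)=\min\Big\{\lambda(B_1)+\lambda(B_2):B\subset\big(B_1\times(K_1\times K_2)\big)\cup\big((K_1\times K_2)\times B_2\big)\Big\},
\end{equation*}
we obtain $\lambda$-negligible sets $B_1,B_2\subset K_1\times K_2$ with $B\subset(B_1\times\cdot)\cup(\cdot\times B_2)$. Then $I_0:=(K_1\times K_2)\setminus(B_1\cup B_2)$ is $\mathcal L^2$-conegligible, and for $y,y'\in I_0$ we have $y\notin B_1$, $y'\notin B_2$, hence $(y,y')\notin B$, i.e. $\gamma_{(y,y')}$ has integer circuitation.

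I expect the only genuine difficulty to be the admissibility of $\pi_\xi$ for singular $\xi$: the temptation is to quote Lemma \ref{Lem:test_R2}, but its pointwise-in-$t$ estimate fails here, and one must instead exploit the $t$-averaged form of Definition \ref{Def:admissi} together with the fact that the loops are confined to the good segments, which is exactly what upgrades the Lebesgue bound to the required $\mu$-bound. The measure-theoretic skeleton (the family $\Pi^\leq$, the integer-circuitation input, and the Kellerer splitting) is then an exact replica of Step~1 of Theorem \ref{theo:neglinters}.
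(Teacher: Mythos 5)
Your proposal is correct and is essentially the paper's own argument: the paper deduces the corollary precisely by running Step~1 of Theorem~\ref{theo:neglinters} on the measures $\xi \in \Pi^\leq\big(\mathcal L^2\llcorner_{K_1\times K_2},\mathcal L^2\llcorner_{K_1\times K_2}\big)$ with the rectangular loops $\gamma_{(y,y')}$ of \eqref{Equa:curve_gamma_yy'} and the Kellerer duality splitting, exactly as you do. Your extra care on admissibility --- observing that Lemma~\ref{Lem:test_R2} covers only absolutely continuous $\xi$, that the time-integrated bound of Definition~\ref{Def:admissi} still holds for singular $\xi$, and that the lower bound $|f|\geq 1/2$ on the selected segments upgrades the $\mathcal L^2$-estimate to the $\mu$-estimate actually required by Assumption~\ref{Def:direc_fidl} --- correctly fills in details the paper leaves implicit.
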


Using Fubini, we obtain that for $\mathcal L^2$-a.e. $y = (y_1,y_2) \in I_0$ the set
\begin{equation}
\label{Equa:full_set}
\big\{ y' \in I_0 : p_2(y') = y_2 \big\} \subset K_2,
\end{equation}
has full measure. Define for one of these $y$
\begin{equation*}
w(y,y') = e^{i \int_{y_1}^{y_1'} v_1(t,y_2) dt + i \int_{y_2}^{y_2'} v_2(y_1',t) dt}, \quad y' \in K_1 \times K_2,
\end{equation*}
i.e. the integral along the curve $\overrightarrow{[y,(y_1',y_2)]} \cup \overrightarrow{[(y_1',y_2),y']}$. The function $w(y,y')$ is defined on $\mathcal L^2$-a.e. point of $K_1 \times K_2$. %, and moreover, by writing
%\begin{equation*}
%\overrightarrow{[y_1',y_1'']} \times \{y_2'\} \cup \{y_1''\} \times \overrightarrow{[y_2',y_2'']} = 
%\end{equation*}

\begin{lemma}
\label{Lem:w_funct_good}
For every $(y',y'') \in (K_1 \times K_2)^2 \setminus N$, $\mathcal L^4(N) = 0$, it holds
\begin{equation*}
w(y,y'') = w(y,y') e^{i \int_{y'_1}^{y_1''} v_1(t,y_2') dt + i \int_{y'_2}^{y_2''} v_2(y_1'',t) dt}. %, \quad y' \in K_1 \times K_2,
\end{equation*}
%More generally
\end{lemma}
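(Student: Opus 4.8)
The plan is to read the claimed identity as a statement of path--independence modulo $2\pi\Z$. Writing
\[
I(a,b) = \int_{a_1}^{b_1} v_1(t,a_2)\,dt + \int_{a_2}^{b_2} v_2(b_1,t)\,dt
\]
for the integral of $v$ along the ``L--shaped'' path that runs first horizontally from $a=(a_1,a_2)$ to $(b_1,a_2)$ and then vertically to $b=(b_1,b_2)$, one has $w(y,y')=e^{iI(y,y')}$ by the definition just given, and the exponent appearing in the statement is exactly $I(y',y'')$. Hence the assertion is equivalent to
\[
I(y,y'') - I(y,y') - I(y',y'') \in 2\pi\Z,
\]
which makes $e^{iI(y,y'')}=e^{iI(y,y')}e^{iI(y',y'')}$. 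So everything reduces to showing that this combination is an integer multiple of $2\pi$.

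I would realize this combination as the circuitation of a single closed loop. The L--shaped path from $a$ to $b$ is precisely the first half $\overrightarrow{[a,(b_1,a_2)]}\cup\overrightarrow{[(b_1,a_2),b]}$ of the rectangle loop $\gamma_{(a,b)}$ of \eqref{Equa:curve_gamma_yy'}. Concatenating the first half of $\gamma_{(y,y'')}$ (from $y$ to $y''$), then the reversal of the first half of $\gamma_{(y',y'')}$ (from $y''$ back to $y'$), then the reversal of the first half of $\gamma_{(y,y')}$ (from $y'$ back to $y$) produces a closed Lipschitz curve $\Sigma=\Sigma(y',y'')$ whose circuitation is exactly $I(y,y'')-I(y',y'')-I(y,y')$. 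By construction
\[
\Graph \Sigma \subset \Graph\gamma_{(y,y'')} \cup \Graph\gamma_{(y',y'')} \cup \Graph\gamma_{(y,y')},
\]
so $\Sigma$ is a closed subcurve of the three rectangles, i.e. $\Sigma \in A(3)$ with $A=\{\gamma_{(y,y'')},\gamma_{(y',y'')},\gamma_{(y,y')}\}$.

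Then I would invoke the intersection machinery. Applying Theorem \ref{theo:neglinters} to the test plan $\pi=(\gamma_{(\cdot,\cdot)})_\sharp(g\,\mathcal L^4)$ furnished by Lemma \ref{Lem:test_R2} (with $g$ a bounded positive density whose support contains $(K_1\times K_2)^2$) yields a Borel, $\mathcal L^4$-conegligible set of parameters $G\subset\R^4$ such that every subloop built from finitely many of the rectangles $\{\gamma_{(u,u')}:(u,u')\in G\}$ lies in $\mathcal I_c$; this is the subloop-strengthening of Corollary \ref{Cor:every_traj_good}. Consequently, as soon as the three pairs $(y,y')$, $(y',y'')$ and $(y,y'')$ all lie in $G$, the loop $\Sigma$ has integer circuitation, and the identity follows. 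To handle the exceptional set, I would fix once and for all a base point $y$ for which (by Fubini) the slice $G_y=\{u:(y,u)\in G\}$ is conegligible and \eqref{Equa:full_set} holds; then the conditions $(y,y')\in G$ and $(y,y'')\in G$ fail only on an $\mathcal L^4$-null set of pairs $(y',y'')$, while $\{(y',y''):(y',y'')\in G\}$ is $\mathcal L^4$-conegligible in $(K_1\times K_2)^2$, again by Fubini. Taking $N$ to be the union of these three null sets gives the statement.

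I expect the main difficulty to be bookkeeping rather than anything deep: one must guarantee that the three rectangles attached to each pair $(y',y'')$ are \emph{simultaneously} good, which is why the argument routes through the globally defined set $G$ of Theorem \ref{theo:neglinters} instead of treating each curve in isolation. A related pitfall, worth flagging, is that one cannot deduce the integrality of $\Sigma$ by pushing $\Sigma(\cdot,\cdot)$ forward directly: since the base point $y$ is frozen, several edges of $\Sigma$ sweep out only segments (e.g. the horizontal edge at the fixed height $y_2$), so the evaluation of such a push-forward would concentrate on $\mathcal L^2$-null lines and fail to be a test plan. Extracting $\Sigma$ as a subloop of the freely varying rectangles is exactly what circumvents this obstruction.
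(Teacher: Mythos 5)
Your argument is correct in substance, but it runs along a genuinely different route from the paper's. The paper's proof is purely algebraic at this point: writing out the defect $I(y,y')+I(y',y'')-I(y,y'')$ (in your notation), the horizontal integrals at height $y_2$ and the vertical integrals at $x_1=y_1''$ partially cancel, and what remains is \emph{exactly} the circuitation of a single auxiliary rectangle $\gamma_{(y',y''')}$ with $y'''=(y_1'',y_2)$. Hence the paper only needs Corollary \ref{Cor:every_traj_good} --- whose conclusion, thanks to the duality argument behind it, holds for \emph{every} pair of corners in the product set $I_0\times I_0$ --- together with the slice condition \eqref{Equa:full_set}, and the exceptional set is read off explicitly; no subloop machinery enters. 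You instead keep the three L-shaped paths, close them into the loop $\Sigma$, note that $\Sigma$ is a subloop of the three rectangles $\gamma_{(y,y')},\gamma_{(y',y'')},\gamma_{(y,y'')}$, and invoke the subloop-closure Theorem \ref{theo:neglinters} plus a Fubini argument for the frozen base point $y$. This is heavier, and it forces one extra a.e.\ requirement on the choice of $y$ (conegligibility of the slice $G_y$), which is harmless since $y$ is chosen generically anyway; what it buys is robustness, since it uses nothing about the rectangular geometry and would survive where the cancellation trick is unavailable. Your closing remark about why one cannot push forward $\Sigma(\cdot,\cdot)$ directly --- the frozen edges concentrate the evaluation on $\mathcal L^2$-null lines, violating \eqref{Equa:curves_pi} --- is accurate and is precisely the obstruction both proofs circumvent. (Incidentally, your $\Sigma$ and the paper's auxiliary rectangle are the same object: after cancelling the overlapping edges, the circuitation of $\Sigma$ is minus that of $\gamma_{(y',y''')}$.)

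One point needs repair. You take ``$g$ a bounded positive density whose support \emph{contains} $(K_1\times K_2)^2$''. Lemma \ref{Lem:test_R2} only makes $(\gamma_{(\cdot,\cdot)})_\sharp(g\,\mathcal L^4)$ a test plan for $(\R^2,\mathcal L^2)$, whereas Assumption \ref{Def:direc_fidl} and Theorem \ref{theo:neglinters} concern test plans for $\mu=f^2\mathcal L^2/\|f\|_2^2$, and only for those does the integer circuitation hypothesis say anything (indeed $v$ is not even defined off $\{f>0\}$). The passage from $\mathcal L^2$ to $\mu$ uses that the rectangles' edges lie on the grid segments $\ell_{k,y}$, $y\in K_k$, where $|f|\geq 1/2$ holds $\mathcal H^1$-a.e., so that the evaluation measure is carried by $\{f^2\geq 1/4\}$ and the bound by $C\mathcal L^2$ upgrades to a bound by $4C\|f\|_2^2\,\mu$. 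This requires $g$ to be supported \emph{inside} $(K_1\times K_2)^2$, not merely positive there: if the support is strictly larger, the plan uses rectangles whose edges leave the good grid and $\mu$-admissibility fails, so Theorem \ref{theo:neglinters} cannot be invoked. Taking $g=\1_{(K_1\times K_2)^2}/\mathcal L^4\big((K_1\times K_2)^2\big)$ --- the choice implicit in the paper's Corollary \ref{Cor:every_traj_good}, where the marginals are $\mathcal L^2\llcorner_{K_1\times K_2}$ --- fixes this, and with that adjustment your proof is complete.
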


\begin{proof}
The proof is a consequence that the statement holds for $y'' \in I_0$ such that $(y''_1,y_2) \in I_0$ because setting $y''' = (y''_1,y_2)$
\begin{equation*}
\begin{split}
& \int_{y_1}^{y_1'} v_1(t,y_2) dt + \int_{y_2}^{y_2'} v_2(y_1',t) dt %\\
%& \quad  
+ \int_{y'_1}^{y_1''} v_1(t,y_2') dt + \int_{y'_2}^{y_2''} v_2(y_1'',t) dt \\
& \quad = \int_{y_1}^{y_1''} v_1(t,y_2) dt + \int_{y_2}^{y_2''} v_2(y_1'',t) dt %\\
%& \quad 
+ \int_{\gamma_{(y',y''')}} v_1 \cdot \dot \gamma_{(y',y''')} \\\
& \quad \in \int_{y_1}^{y_1''} v_1(t,y_2) dt + \int_{y_2}^{y_2''} v_2(y_1'',t) dt %\\
%& \quad 
+ 2\pi \Z, %\int_{\gamma_{(y',y''')}} v_1 \cdot \dot \gamma_{(y',y''')},
\end{split}
\end{equation*}
and the set
$$
\big\{ y' \in I_0, (y'_1,y_2) \in I_0 \big\} \times \big\{ y'' \in I_0, (y''_1,y_2) \in I_0 \big\}
$$
has full measure due to the assumption on \eqref{Equa:full_set} and on $I_0$.
%
%
%Given two points $y',y'' \in K_1 \times K_2$, the formula holds whenever
%\begin{equation*}
%\gamma_{(y,y')}, \gamma_{(y,y'')}, \gamma_{(y,y''')} \in I_0, \quad y''' = (y'_1,y''_2).
%\end{equation*}
%Being $(y',y'') \mapsto y'''$ a projection, by Fubini there is an $\mathcal L^4$-negligible set $N$ such that outside $N$ the above conditions hold.
\end{proof}

By repeating the construction, we obtain the following corollary.

\begin{corollary}
\label{Cor:count_w}
The is a countable family of sets $K_{1,n} \times K_{2,n}$ and of functions $w_n$ such that
\begin{equation*}
\bigcup_{n \in \N} K_{1,n} \times K_{2,n} \quad \text{has full $|f|^2 \mathcal L^2$-measure}
\end{equation*}
and $w_n$ satisfies Lemma \ref{Lem:w_funct_good} in every $K_{1,n} \times K_{2,n}$.
\end{corollary}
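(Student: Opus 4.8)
The plan is to run the construction of \eqref{Equa:Lebesgue_f_Df_1}--Lemma \ref{Lem:w_funct_good} at every suitable base point of $\{|f|>0\}$ and then to extract a countable subfamily covering almost all of $\{|f|>0\}$ by an exhaustion argument. First I would recall that, by the Lebesgue differentiation theorem, $\mathcal L^2$-a.e. point $\bar x \in \{|f|>0\}$ is simultaneously a Lebesgue point of $f$ and of a Borel representative of $Df$, with Lebesgue value $f(\bar x) \neq 0$. Fixing one such $\bar x$ and replacing the normalization $f \approx 1$ of \eqref{Equa:Lebesgue_f_Df_1} by $f \approx f(\bar x)$, and the bound $|f| \geq 1/2$ by $|f| \geq |f(\bar x)|/2$, the entire construction applies after translating the base point to $\bar x$: for every sufficiently small $r>0$ one obtains compact sets $K_1^{\bar x,r}, K_2^{\bar x,r} \subset [-r,r]$ with $\mathcal L^1(K_k^{\bar x,r}) \geq r$, so that $S^{\bar x,r} := \bar x + (K_1^{\bar x,r} \times K_2^{\bar x,r})$ satisfies $S^{\bar x,r} \subset \bar x + [-r,r]^2$ and $\mathcal L^2(S^{\bar x,r}) \geq r^2$, together with a function $w^{\bar x,r}$ satisfying Lemma \ref{Lem:w_funct_good} on it. The only role of the lower bound $|f| \geq |f(\bar x)|/2$ is to force the curves $\gamma_{(y,y')}$ to stay in $\{|f|>0\}$, so that the test plan produced by Lemma \ref{Lem:test_R2} is admissible for $\mu = f^2\mathcal L^2/\|f\|_2^2$ and Corollary \ref{Cor:every_traj_good} applies.

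The key geometric observation is that $S^{\bar x,r}$ occupies a fixed positive fraction of the square $\bar x + [-r,r]^2$: indeed $\mathcal L^2(S^{\bar x,r}) \geq r^2 = \tfrac14 \mathcal L^2(\bar x + [-r,r]^2)$. From this I would deduce the following density property: for every Borel set $A \subset \{|f|>0\}$ with $\mathcal L^2(A)>0$ there exist a base point $\bar x$ and a radius $r$ with $\mathcal L^2(A \cap S^{\bar x,r}) > 0$. To see this, pick $\bar x$ a Lebesgue density point of $A$ that is also an admissible base point (a.e. point of $A$ is both), so that $\mathcal L^2(A \cap (\bar x + [-r,r]^2)) > \tfrac{7}{8}\mathcal L^2(\bar x + [-r,r]^2)$ for $r$ small; combined with the density bound $\tfrac14$ for $S^{\bar x,r}$, the two sets overlap in measure at least $\tfrac18 \mathcal L^2(\bar x + [-r,r]^2) > 0$ by inclusion–exclusion inside the square.

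With this property in hand the corollary follows by exhaustion. I would let $M$ be the supremum of $\mathcal L^2\big(\{|f|>0\} \cap \bigcup \mathcal G\big)$ over all countable subfamilies $\mathcal G$ of $\{S^{\bar x,r}\}$; choosing a maximizing sequence of countable families and taking their (still countable) union, the supremum is attained by some countable family $\{S^{\bar x_n,r_n}\}_n$. If its union did not cover $\{|f|>0\}$ up to an $\mathcal L^2$-null set, the density property applied to the leftover set of positive measure would produce one more set strictly increasing the covered measure, contradicting maximality. Relabelling $K_{1,n} \times K_{2,n} := S^{\bar x_n,r_n}$ and $w_n := w^{\bar x_n,r_n}$ gives a countable family whose union has full $\mathcal L^2$-measure in $\{|f|>0\}$; since $|f|^2\mathcal L^2$ is absolutely continuous with respect to $\mathcal L^2$ and concentrated on $\{|f|>0\}$, this union has full $|f|^2\mathcal L^2$-measure, and each $w_n$ satisfies Lemma \ref{Lem:w_funct_good} on $K_{1,n}\times K_{2,n}$ by construction.

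I expect the main obstacle to be precisely this last upgrade, from a fixed positive density $\tfrac14$ in each square to full measure of the union: the per-square density bound alone only covers a definite fraction of $\{|f|>0\}$, and it is the exhaustion argument (equivalently, an iterated Vitali covering) that promotes it to a conegligible cover. A secondary point to check carefully, though routine, is that translating and renormalizing the base point genuinely preserves every step of the construction, the only sensitive ingredient being the admissibility of the test plan for $\mu$, which is exactly what the bound $|f| \geq |f(\bar x)|/2$ on the selected segments secures.
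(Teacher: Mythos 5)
Your argument is correct in substance, but it globalizes the local construction by a different mechanism than the paper. The paper's own proof is a two-line covering argument: it first notes that, by shrinking the radius in \eqref{Equa:Lebesgue_f_Df_1}, the sets $K_{1,n} \times K_{2,n}$ can be chosen with density in the ambient square tending to $1$, i.e. $\mathcal L^2(K_{1,n} \times K_{2,n})/(2r)^2 \to 1$, and then extracts the countable conegligible family by a standard Vitali-type covering argument, as in the proof of \cite[Theorem 5.5.1]{bog:measure}. You instead keep the crude density bound $1/4$ and replace the covering theorem by an exhaustion: every positive-measure subset of $\{|f|>0\}$ overlaps some $S^{\bar x,r}$ in positive measure (your inclusion-exclusion step at a density point), so a countable family realizing the supremum of covered measure must be conegligible. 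What your route buys is robustness: it needs neither the density improvement to $1$ nor any regularity or disjointness of the selected sets, which is convenient here because $\bar x$ need not belong to $S^{\bar x,r}$, so the most classical statements of the Vitali theorem (for families of sets containing the point) do not apply verbatim and one must argue as in their proof, which is what the paper implicitly does. What the paper's route buys is an essentially disjoint subfamily and no manipulation of suprema over uncountable collections of sets.

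One repair is needed in your exhaustion step: you define $M$ as a supremum of $\mathcal L^2$-measures, but for $f \in W^{1,2}(\R^2)$ the set $\{|f|>0\}$ may have infinite Lebesgue measure, in which case $M = \infty$ and adding a set of positive measure to a maximizing family no longer contradicts maximality. The fix is immediate: run the same exhaustion with the finite measure $\mu = |f|^2 \mathcal L^2/\|f\|_2^2$ (or with $\min(|f|^2,1)\,\mathcal L^2$); since $\mu$ and $\mathcal L^2$ have the same negligible sets inside $\{|f|>0\}$, your density property gives $\mu(A \cap S^{\bar x,r}) > 0$ as well, and the contradiction goes through. With this change, and with your (correct) observation that the lower bound $|f| \geq |f(\bar x)|/2$ along the selected segments is exactly what upgrades the $\mathcal L^2$-test plan of Lemma \ref{Lem:test_R2} to a $\mu$-admissible one, so that Corollary \ref{Cor:every_traj_good} and Lemma \ref{Lem:w_funct_good} apply at the recentred base point, your proof is complete.
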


Note that we do not require the sets $K_{1,n} \times K_{2,n}$ to be disjoint.

\begin{proof}
Since by restricting the balls we can require that
\begin{equation*}
\frac{\mathcal L^2(K_{1,n} \times K_{2,n})}{(2r)^2} \to 1, 
\end{equation*}
a standard covering argument as in the proof of \cite[Theorem 5.5.1]{bog:measure} gives the statement.
\end{proof}

The above lemma defines the function $w$ in a set of positive Lebesgue measure, with the property that every two points are connected with a path where $|f| \geq 1/2$. This is the hardest step, since the remaining part needs only to show that this $w$ is compatible with all test plans and it can be extended to the whole $\{f \not= 0\}$.

%Consider a test plan $\pi$ such that
%\begin{equation*}
%e(0)_\sharp \pi \leq C \mathcal L^2 \llcorner_{K_1 \times K_2},
%\end{equation*}
%and define the measure
%\begin{equation}
%\label{Equa:W_funct}
%W = \big( w(y,\gamma(0)) e^{i \int_0^1 v \cdot \dot \gamma \mathcal L^1} \big)_ \sharp \pi. %, \gamma(1) = z \bigg\}.
%\end{equation}

Consider a test plan $\pi$ such that
\begin{equation*}
e(0)_\sharp \pi \leq C \mathcal L^2 \llcorner_{K_{1,i} \times K_{2,i}}, \quad e(1)_\sharp \pi \leq C \mathcal L^2 \llcorner_{K_{1,j} \times K_{2,j}}.
\end{equation*}
For every $\gamma,\gamma'$ such that
\begin{equation*}
\gamma(0),\gamma'(0) \in K_{1,i} \times K_{2,i}, \quad \gamma(1),\gamma'(1) \in K_{1,j} \times K_{2,j},
\end{equation*}
let $\hat \gamma_{\gamma,\gamma'}$ be the closed curve obtained by joining $\gamma(0),\gamma'(0)$ and $\gamma(1),\gamma'(1)$ with the curves used in Lemma \ref{Lem:w_funct_good} to compute $w$. Define
\begin{equation*}
\tilde \pi = (\hat \gamma_{\gamma,\gamma})_\sharp \big( \pi(d\gamma) \times \pi(d\gamma') \big),
\end{equation*}
which is a transference plan due to Lemma \ref{Lem:test_R2}. The condition of integer circuitation yields that by removing a $\tilde \pi$-negligible set of trajectories the curves $\hat \gamma_{\gamma,\gamma}$ have integer circuitation. For all these curves it follows that
\begin{equation}
\label{Equa:ratio_1}
\begin{split}
\frac{w_j(\gamma(1))}{w_i(\gamma(0)) e^{i \int_0^1 v \cdot \dot \gamma \mathcal L^1}} &= \frac{w_j(\gamma'(1)) e^{i \int v \cdot \dot{\overrightarrow{[\gamma'(1),\gamma(1)]}}}}{w_i(\gamma'(0)) e^{i \int v \cdot \dot{\overrightarrow{[\gamma'(0),\gamma(0)]})} + i \int_0^1 v \cdot \dot \gamma \mathcal L^1}} = \frac{w_j(\gamma'(1))}{w_i(\gamma'(0)) e^{i \int_0^1 v \cdot \dot \gamma' \mathcal L^1}}. 
\end{split}
\end{equation}
The above computation yields the following lemma.

\begin{lemma}
\label{Lem:single_constat}
There exists a constant $c_{ij}$ such that for every test plan $\pi$ with
$$
e(0)_\sharp \pi \leq C \mathcal L^2 \llcorner_{K_{1,i} \times K_{2,i}}, \quad e(1)_\sharp \pi \leq C \mathcal L^2 \llcorner_{K_{1,j} \times K_{2,j}},
$$
it holds for $\pi$-a.e. $\gamma$
\begin{equation*}
c_{ij} w_j(\gamma(1)) = w_i(\gamma(0)) e^{i \int v \cdot \dot  \gamma}.
\end{equation*}
\end{lemma}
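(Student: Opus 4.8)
The plan is to show that the $\mathbb{S}^1$-valued function
\begin{equation*}
\rho(\gamma) := \frac{w_i(\gamma(0))\, e^{i \int v \cdot \dot \gamma}}{w_j(\gamma(1))}
\end{equation*}
is $\pi$-a.e. equal to one and the same constant for every admissible competitor $\pi$, and then to declare $c_{ij}$ to be that constant. First I would record that $\rho$ is well defined for $\pi$-a.e. $\gamma$: the marginal bounds $e(0)_\sharp \pi \leq C \mathcal{L}^2 \llcorner_{K_{1,i}\times K_{2,i}}$ and $e(1)_\sharp \pi \leq C \mathcal{L}^2 \llcorner_{K_{1,j}\times K_{2,j}}$ force $\gamma(0)$ and $\gamma(1)$ to lie, for $\pi$-a.e. $\gamma$, in the full-measure subsets on which $w_i$ and $w_j$ are respectively defined (Lemma \ref{Lem:w_funct_good}), while $\int v \cdot \dot\gamma$ is defined $\pi$-a.e. through $\Lambda_\gamma$. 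Note $\rho$ takes values in $\mathbb{S}^1$, so statements about it can be phrased on the compact group $\mathbb{S}^1$.

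Fixing such a $\pi$, I would form the doubled plan $\tilde\pi = (\hat\gamma_{\gamma,\gamma'})_\sharp\big(\pi(d\gamma)\times\pi(d\gamma')\big)$, which is admissible and supported on $\Gamma_c$ by Lemma \ref{Lem:test_R2} applied to the product measure. Theorem \ref{theo:neglinters} (equivalently, Assumption \ref{Def:direc_fidl} applied to $\tilde\pi$) removes a $\tilde\pi$-negligible set of loops $\hat\gamma_{\gamma,\gamma'}$ whose subloops fail to have integer circuitation; since $\tilde\pi$ is a pushforward of $\pi\times\pi$ under the Borel map $(\gamma,\gamma')\mapsto\hat\gamma_{\gamma,\gamma'}$, the preimage of this null set is $(\pi\times\pi)$-negligible. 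On the complementary conegligible set of pairs, the computation \eqref{Equa:ratio_1} already carried out reads exactly $\rho(\gamma)=\rho(\gamma')$. To upgrade this to constancy I would push $\pi$ forward by $\rho$, setting $\nu=\rho_\sharp\pi$ on $\mathbb{S}^1$: the relation $\rho(\gamma)=\rho(\gamma')$ for $(\pi\times\pi)$-a.e. pair says precisely that $\nu\times\nu$ is concentrated on the diagonal of $\mathbb{S}^1\times\mathbb{S}^1$, which forces $\nu=\delta_c$ for a single $c\in\mathbb{S}^1$ (a product measure charging only the diagonal must be a Dirac mass). Hence $\rho\equiv c=:c_{ij}(\pi)$ for $\pi$-a.e. $\gamma$, which is the asserted identity with this $\pi$-dependent constant.

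It remains to check that $c_{ij}(\pi)$ is independent of the test plan. Given two competitors $\pi_1,\pi_2$ with the stated marginal bounds, their normalized sum $\pi=\tfrac12(\pi_1+\pi_2)$ satisfies bounds of the same form and so is again an admissible competitor; since $\pi_a\leq 2\pi$ for $a=1,2$, every property holding $\pi$-a.e. holds $\pi_a$-a.e., whence $c_{ij}(\pi)=c_{ij}(\pi_a)$ and therefore $c_{ij}(\pi_1)=c_{ij}(\pi_2)$. This pins down a single $c_{ij}$ valid for all such plans (the statement being vacuous if none exists). The main obstacle I anticipate is the measure-theoretic bookkeeping — transferring the circuitation null set from $\tilde\pi$ back to $\pi\times\pi$ and justifying the diagonal-concentration step — rather than any analytic difficulty; the genuinely new point beyond \eqref{Equa:ratio_1} is the $\pi$-independence, and this is resolved by the additivity of admissible plans rather than by any feature of individual curves.
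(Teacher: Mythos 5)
Your proposal is correct and follows essentially the same route as the paper: both form the doubled plan $\tilde\pi = (\hat\gamma_{\gamma,\gamma'})_\sharp(\pi\times\pi)$, invoke the integer circuitation condition to discard a negligible set of loops, and read off from \eqref{Equa:ratio_1} that the ratio $w_i(\gamma(0))e^{i\int v\cdot\dot\gamma}/w_j(\gamma(1))$ is $\pi$-a.e.\ constant (the paper fixes one good $\gamma$ by Fubini where you use the diagonal-concentration/Dirac-mass argument, a cosmetic difference). Your final averaging step $\pi=\tfrac12(\pi_1+\pi_2)$ to show $c_{ij}$ does not depend on the test plan is a worthwhile addition, since the paper's proof defines $c_{ij}$ from a single plan and leaves this uniformity implicit.
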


\begin{proof}
For $\pi$-a.e. $\gamma$ the set $\gamma'$ such that $\hat \gamma_{\gamma,\gamma'}$ is of integer circuitation has full $\pi$-measure. For every such a $\gamma$, define $c_{ij}$ as the inverse of the ratio of \eqref{Equa:ratio_1}:
\begin{equation*}
c_{ij} = \frac{w_i(\gamma(0)) e^{i \int_0^1 v \cdot \dot \gamma \mathcal L^1}}{w_j(\gamma(1))}.
\end{equation*}
Since by \eqref{Equa:ratio_1} the ratio is the same for $\pi$-a.e. $\gamma$, the proof is complete.
\end{proof}

%\begin{lemma}
%\label{Prop:W_single}
%The measure $W$ is concentrated on the graph of a function.
%\end{lemma}
%
%\begin{proof}
%Assume the opposite. By a standard reduction argument, we can reduce to the case of $\pi = (\pi_1 + \pi_2)/2$ with $\pi_i$ test plans such that
%\begin{equation*}
%e(1)_\sharp \pi_1 = e(1)_\sharp \pi_2 = \nu,
%\end{equation*}
%and the multifunction $W_i(z)$ computed according to \eqref{Equa:W_funct} have disjoint image at every $z$.
%
%Disintegrate
%\begin{equation*}
%\pi_i = \int \pi_{i,x} \nu(dx), \quad i=1,2,
%\end{equation*}
%and define the test plan
%\begin{equation*}
%\tilde \pi = \hat (\gamma_{\gamma_1,\gamma'_2})_\sharp (\pi_{1,x}(d\gamma_1) \times \pi_{2,x}(d\gamma_2)) \nu(dx),
%\end{equation*}
%where $\hat \gamma_{\gamma_1,\gamma_2}$ is the closed curve obtained by joining the curves $\gamma_1,\gamma_2$, $\gamma_1(0),\gamma_2(0) \in K_1 \times K_2$, with half of the curve of \eqref{Equa:curve_gamma_yy'} connecting $\gamma_1(0),\gamma_2(0)$.
%
%By the integer circuitation condition, it follows that removing a $\tilde \pi$-negligible set of trajectories, the value 
%
%
%\end{proof}

Starting from the function $w_1$ defined in $K_{1,1} \times K_{2,1}$, we can then find constants $c_{1j}$ such that the above lemma holds for all sets $K_{1,j} \times K_{2,j}$ for which there is a test plan connecting $K_{1,1} \times K_{2,1}$ to $K_{1,j} \times K_{2,j}$. If at this step we cover $\{|f| > 0\}$ up to a $\mathcal L^2$-negligible set, we are done, otherwise we take the next $K_{1,i} \times K_{2,i}$ and connect to some others subsets $K_{1,j'} \times K_{2,j'}$: clearly these sets cannot belong to the previous family, otherwise there is a test plan from $K_{1,1} \times K_{2,1}$ to $K_{1,j'} \times K_{2,j'}$ and then to $K_{1,i} \times K_{2,i}$.

We this conclude with the following statement.

\begin{proposition}
\label{Prop:w_good}
There exists a function $w$ such that for all test plans $\pi$ w.r.t. the measure $|f|^2 \mathcal L^2$ it holds
\begin{equation*}
w(\gamma(1)) = w(\gamma(0)) e^{i \int v \cdot \dot \gamma} \quad \text{$\pi$-a.e. $\gamma$}.
\end{equation*}
Moreover all these functions depend at most on countably many multiplicative constants $c_j$.
\end{proposition}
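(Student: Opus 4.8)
The plan is to organize the countable family $\{K_{1,n}\times K_{2,n}\}_n$ from Corollary \ref{Cor:count_w} into connected components and to glue the local functions $w_n$ across each component using the constants produced by Lemma \ref{Lem:single_constat}. Declare two indices $i,j$ \emph{connected}, written $i\sim j$, if there is a test plan $\pi$ for $|f|^2\mathcal{L}^2$ with $0\neq e(0)_\sharp\pi\le C\mathcal{L}^2\llcorner_{K_{1,i}\times K_{2,i}}$ and $0\neq e(1)_\sharp\pi\le C\mathcal{L}^2\llcorner_{K_{1,j}\times K_{2,j}}$. This relation is reflexive (use the constant curve plan of Remark \ref{Rem:transfe} restricted to $K_{1,i}\times K_{2,i}$), symmetric (apply the time reversal $R$, which sends $\int v\cdot\dot\gamma$ to its opposite), and transitive: given plans realizing $i\sim j$ and $j\sim m$, one glues a curve of the first plan ending in $K_{1,j}\times K_{2,j}$ to a curve of the second starting there, bridging the two endpoints by the axis parallel curves of Lemma \ref{Lem:w_funct_good}; the Borel gluing and the marginal estimate are exactly those of Lemma \ref{Lem:test_R2} and Theorem \ref{theo:neglinters}, so the glued measure is again a test plan realizing $i\sim m$. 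Hence $\sim$ is an equivalence relation with at most countably many classes.

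First I would upgrade Lemma \ref{Lem:single_constat} to the cocycle identities $c_{ii}=1$, $c_{ij}c_{ji}=1$ and $c_{ij}c_{jm}=c_{im}$ for connected triples. The first two follow from the constant curve and time reversal plans above. The third, which is the genuinely delicate point, is obtained from the glued plan used for transitivity: applying Lemma \ref{Lem:single_constat} to the glued plan gives $w_i(\gamma_1(0))e^{i\int v\cdot\dot\gamma}=c_{im}w_m(\gamma_2(1))$, while splitting the integral over the concatenation as $\int_{\gamma_1}+\int_{\mathrm{bridge}}+\int_{\gamma_2}$ and cancelling the bridge factor by Lemma \ref{Lem:w_funct_good} (which is precisely the statement that $w_j$ is path independent within $K_{1,j}\times K_{2,j}$) gives $w_i(\gamma_1(0))e^{i\int v\cdot\dot\gamma}=c_{ij}c_{jm}w_m(\gamma_2(1))$; comparing the two yields $c_{im}=c_{ij}c_{jm}$.

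With the cocycle in hand I fix, in each class $C_k$, a representative $r_k$ and set $d_{r_k}:=1$ and $d_j:=c_{r_k j}$ for $j\in C_k$; the cocycle guarantees that $d_j$ is independent of the chain of connections used and that $d_j=d_i c_{ij}$ whenever $i\sim j$. Define $w:=d_j\,w_j$ on $K_{1,j}\times K_{2,j}$. On an overlap $x\in (K_{1,i}\times K_{2,i})\cap(K_{1,j}\times K_{2,j})$ the constant curve at $x$ gives $c_{ij}w_j(x)=w_i(x)$, so $d_i w_i(x)=d_j w_j(x)$ and $w$ is well defined $\mathcal{L}^2$-a.e.\ on $\bigcup_j K_{1,j}\times K_{2,j}$, which by Corollary \ref{Cor:count_w} has full $|f|^2\mathcal{L}^2$-measure. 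Since there is one free phase $d_{r_k}\in\mathbb{S}^1$ per class, the constructed $w$ depends on at most countably many multiplicative constants $c_j$, and any other admissible choice of the $d_{r_k}$ differs from it by exactly these constants.

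It remains to verify the relation for an arbitrary test plan $\pi$. For $\pi$-a.e.\ $\gamma$ the endpoints lie (by absolute continuity of the endpoint marginals, up to the reparametrization of Proposition \ref{Prop:gene_test_plan}) in the full-measure set $\bigcup_j K_{1,j}\times K_{2,j}$, say $\gamma(0)\in K_{1,i}\times K_{2,i}$ and $\gamma(1)\in K_{1,j}\times K_{2,j}$; restricting $\pi$ to such curves exhibits a plan connecting the two sets, whence $i\sim j$ and both indices lie in one class. Decomposing $\pi$ into the countably many Borel pieces determined by the pair $(i,j)$ and applying Lemma \ref{Lem:single_constat} to each piece gives $c_{ij}w_j(\gamma(1))=w_i(\gamma(0))e^{i\int v\cdot\dot\gamma}$; multiplying by $d_j$ and using $d_j=d_i c_{ij}$ turns this into $w(\gamma(1))=w(\gamma(0))e^{i\int v\cdot\dot\gamma}$ for $\pi$-a.e.\ $\gamma$. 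The main obstacle is precisely the cocycle step, together with confirming that the glued measures remain admissible test plans and that an arbitrary $\pi$ has both endpoints in a single component.
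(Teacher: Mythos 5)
Your proposal is correct and follows essentially the same route as the paper: the paper also partitions the sets $K_{1,n}\times K_{2,n}$ into classes of the relation \virgolette{connected by a test plan}, glues the local $w_n$ using the constants of Lemma \ref{Lem:single_constat}, and concludes that solutions differ by one constant per class. The only difference is one of completeness: the paper's proof merely asserts that the connection relation is an equivalence relation and that the pre-stated iterative construction is consistent, whereas you verify explicitly the cocycle identity $c_{ij}c_{jm}=c_{im}$ (via the glued plan and the bridge cancellation of Lemma \ref{Lem:w_funct_good}) and the overlap compatibility, which is exactly the consistency the paper leaves implicit.
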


\begin{proof}
We are left in proving the second part, which follows from the following observations:
\begin{enumerate}
\item every solution $\tilde w$ satisfies
\begin{equation*}
\frac{\tilde w}{w_j} = \ \text{constant in every $K_{1,j} \times K_{2,j}$}:
\end{equation*}
indeed it is constant on the curves used in Lemma \ref{Lem:w_funct_good};
\item the relation $R$
\begin{equation*}
(K_{1,j} \times K_{2,j}) R (K_{1,i} \times K_{2,i}) \quad \Longleftrightarrow \quad \exists \pi \ \text{plan connecting them as in Lemma \ref{Lem:single_constat}}
\end{equation*}
is an equivalence relation;
\item the construction of $w$ presented before the statement of this proposition is defined up to a multiplicative constant in each equivalence class.
\end{enumerate}
This concludes the proof.
\end{proof}

Once the function $w$ is constructed, it follows from the chain rule in one dimension that for all test plans w.r.t. $|f|^2 \mathcal L^2$ it holds
\begin{equation*}
\frac{d}{dt} (fw) \circ \gamma = \frac{d f \circ \gamma}{dt} w + f \frac{d w \circ \gamma}{dt} = w Df \cdot \dot \gamma + i w f v \cdot \gamma, \end{equation*}
for $\pi$-a.e. $\gamma$. In particular, by elementary arguments, for $\mathcal L^1$-a.e. $y_2$ it holds $y_1 \mapsto (fw)(y_1,y_2)$ is a.c. and has derivative $w \partial_{x_1} f + i w v_1$, and this means that
\begin{equation*}
\|D(fw)\|_2 \leq \|Df\|_2 + \int |v|^2 |f|^2\mathcal L^2 < \infty,
\end{equation*}
Since $|fw| = |f|$ we thus conclude with the following result.

%In particular, considering the lines parallel to the $1$-axis, we obtain that for $\mathcal L^2$-a.e. $x_2$
%\begin{equation*}
%\partial_{x_1} (fw) = \partial_{x_1}f w + f \partial_{x_1} w
%\end{equation*}
%in every subinterval of $\{|f| > 0\}$. Being $x_1 \mapsto f(x_1,x_2)$ continuous and $\partial_{x_1} f \in L^2(\R)$ for $\mathcal L^1$-a.e. $x_2$, being $|w| = 1$ we can deduce that the same formula holds in the whole $\R$ and
%\begin{equation*}
%\|\partial_{x_1} fw\|_2 \leq \|\partial_{x_1} f\|_2 + \|f v \cdot \mathbf e_1\|_2 < \infty.
%\end{equation*}
%The very same computation can be done for $\partial_{x_2} (fw)$.

\begin{proposition}
\label{Prop:one_L2}
For every $w$ given by Proposition \ref{Prop:w_good} the function $fw$ is in $W^{1,2}(\R^2)$ and has differential $D(fw) = wDf + iwfv$.
\end{proposition}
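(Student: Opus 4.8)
The plan is to verify membership in $W^{1,2}(\R^2)$ via the classical absolutely-continuous-on-lines (ACL) criterion: a function $u \in L^2$ lies in $W^{1,2}$ exactly when it admits a representative that is absolutely continuous on $\mathcal L^1$-a.e.\ line parallel to each coordinate axis, with the resulting a.e.-defined partial derivatives belonging to $L^2$. Since $fw$ is complex valued I would apply the criterion to its real and imaginary parts. The $L^2$ membership of $fw$ itself is immediate: because $w$ takes values in $\mathbb S^1$ we have $|fw| = |f|$ pointwise, so $fw \in L^2(\mathcal L^2)$ follows from $f \in W^{1,2} \subset L^2$, and in particular $fw$ extended by $0$ on $\{f = 0\}$ is continuous across that set.

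To obtain absolute continuity along lines I would feed into Proposition \ref{Prop:w_good} the test plans concentrated on segments parallel to a fixed axis $\mathbf e_k$, i.e.\ the one-directional pieces whose admissibility is established inside the proof of Lemma \ref{Lem:test_R2}. Proposition \ref{Prop:w_good} then gives, along the corresponding segment,
\[
w(\gamma(t)) = w(\gamma(0))\, e^{i \int_0^t v_k(\gamma(s))\, \dot{|\gamma|}(s)\, ds}.
\]
By Fubini, for $\mathcal L^1$-a.e.\ transverse coordinate the restriction of $v_k$ to that line is in $L^1$, so the exponent is an absolutely continuous real function and $w$ restricted to the line is absolutely continuous with $\tfrac{d}{dt} w = i v_k w$. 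As $f$ (in its good representative) is absolutely continuous on $\mathcal L^1$-a.e.\ line parallel to $\mathbf e_k$, the product $fw$ is absolutely continuous there, and the one-dimensional product rule yields $\partial_{x_k}(fw) = w\,\partial_{x_k} f + i w f v_k$ for $\mathcal L^2$-a.e.\ point. Collecting $k=1,2$ gives $D(fw) = w\, Df + i w f v$, which is the claimed identity and matches the chain-rule heuristic displayed just before the statement.

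Finally I would check the $L^2$ bound on the gradient: since $|w| = 1$,
\[
\|D(fw)\|_2 \le \|Df\|_2 + \big\| |f|\,|v| \big\|_2 < \infty,
\]
where $\|Df\|_2 < \infty$ because $f \in W^{1,2}$, and $\||f|\,|v|\|_2^2 = \int |v|^2 |f|^2\, \mathcal L^2 < \infty$ because $v \in L^2(T^*X)$ with respect to $\mu = |f|^2 \mathcal L^2/\|f\|_2^2$. This closes the ACL criterion. The main obstacle I anticipate is bookkeeping of null sets and admissibility: Proposition \ref{Prop:w_good} controls $w$ only along $\pi$-a.e.\ curve, whereas the ACL criterion needs control along $\mathcal L^1$-a.e.\ line, and the segment plans are $\mu$-admissible only where $|f|$ is bounded below (as in the regions $\{|f| \ge 1/2\}$ used in the construction). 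Reconciling these requires localizing on such regions, invoking Fubini to pass from the $\pi$-a.e.\ statement to an $\mathcal L^1$-a.e.-line statement, using the continuity of $fw$ across $\{f=0\}$ granted by $|fw| = |f|$ to glue the line restrictions, and ensuring a jointly measurable representative of $fw$ so that the restrictions are meaningful.
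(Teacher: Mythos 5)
Your proposal is correct and takes essentially the same route as the paper, whose proof is the discussion immediately preceding the statement: the chain rule along test-plan curves, the passage (``by elementary arguments'') to absolute continuity of $y_1 \mapsto (fw)(y_1,y_2)$ on $\mathcal L^1$-a.e.\ line, the bound $\|D(fw)\|_2 \le \|Df\|_2 + \bigl( \int |v|^2 |f|^2 \mathcal L^2 \bigr)^{1/2}$, and $|fw| = |f| \in L^2$. Your explicit treatment of the localization to regions where $|f|$ is bounded below, the Fubini step from $\pi$-a.e.\ curve to $\mathcal L^1$-a.e.\ line, and the gluing across $\{f = 0\}$ simply fills in what the paper compresses into ``elementary arguments.''
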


%\subsection{The $d$-dimensional case}
%\label{Ss:d_dime_Rd}
%
%
%The proof follows the same line as above, once we are able to construct set of positive measures close to every Lebesgue point of $f,\nabla f$ (as in \eqref{Equa:Lebesgue_f_Df_1}) where Lemma \ref{Lem:w_funct_good} holds. Assume that the statement is proved for 
%\input{disintegr.tex}
%\input{min_sigma.tex}

\section{Construction of the minimal equivalence relation}
\label{S:constr_min_eq}

In this section we construct a decomposition $X = \cup_\alpha X_\alpha$ into disjoint sets with the following properties:
\begin{itemize}
\item each test plan is concentrated on a set of curves $\gamma$ such that $\gamma([0,1])$ belongs to a singe elements $Z_\alpha$ up to a $\mathcal H^1$-negligible est;
\item there exists a test plan $\bar \pi$ generating the partition $\{X_\alpha\}_\alpha$ up to a $\mu$-negligible set.
\end{itemize}
The first property means that every test plan move mass only inside the equivalence classes $X_\alpha$: this can be interpreted by saying that the $X_\alpha$ are the connected components from the point of view of the MMS $(X,\mu)$, in the sense that no mass transfer can occur between different equivalence classes.% it is defined up to $\mu$-negligible sets. 
For example, the MMS
\begin{equation}
\label{Equa:twoballs}
B_1((-2,0)) \cup B_1((2,0)) \cup [-1,1] \times \{0\} \subset \R^2
\end{equation}
is such that the segment connecting $[-1,1]$ has $\mathcal L^2$-measure $0$, so the no test plans $\pi$ can charge mass on it. Hence it is not seen by any Sobolev function $f \in W^{1,2}(X)$: from their point of view the two balls are completely disjoint. \\
The second property instead says that no smaller partitions can be constructed if they need to satisfy the first property: even by changing the set where the plan $\bar \pi$ is concentrated, the partition $\{X_\alpha\}_\alpha$ remains the same in a set of full $\mu$-measure. Returning to the example \eqref{Equa:twoballs}, it is elementary to see that the two balls cannot be further subdivided if we require the above first point to hold (recall Section \ref{S:case_R2} and see also Example \ref{Ex:torus_Riemann}).

In the first part of this section we take an abstract point of view as follow.
\begin{enumerate}
\item The (complete) $\sigma$-algebra \gls{calM} of $\mu$ is separable, i.e. countably generated: identifying elements of $\mathcal M$ up to negligible sets, we obtain a separable measure algebra which we still denote by $(\mathcal M,\mu)$ or $\mathcal M$ since the measure $\mu$ is fixed here. The function
\begin{equation}
\label{Equa:dist_meas_alg}
d(\Omega,\Omega') = \mu( \Omega \Delta \Omega' )
\end{equation}
is a distance on $\mathcal M$, which makes $\mathcal M$ a complete and separable metric space.
\item Each equivalence relation $E$ on $X$ generates the sub $\sigma$-algebra $\mathcal M^E$ made of all saturated measurable sets,
\begin{equation}
\label{Equa:gen-sigma}
\gls{calME} = \big\{ \Omega \in \mathcal M : E \Omega = \Omega \big\}.
\end{equation}
The corresponding measure algebra $\mathcal M^E$ is clearly a measure sub-algebra of $\mathcal M$. In particular it is separable and closed.
\end{enumerate}
The statements below can be deduced easily from the properties of measure algebras, see \cite[Chapter 3]{Fre:measuretheory3}.

%In the previous section we have constructed a family of equivalence relations $\{E^s\}_s$ parameterized by the selection $s : \adm(\mu) \to \mathcal K(\Gamma_c)^\N$, to which it correponds a family of measure sub-algebras $\{(\mathcal M^s,\mu)\}_s$. This family is closed under countable union.
%
%\begin{lemma}
%If $\{\mathcal M^{s_n}\}_n$ is a family of measure sub-algebras, then there exists $\mathcal M^{\bar s}$ such that
%\begin{equation*}
%\mathcal M^{\bar s} \supset \bigcup_n \mathcal M^{s_n}.
%\end{equation*}
%\end{lemma}
%
%\begin{proof}
%It is an immediate consequence of Proposition \ref{Prop:sigma_clos}: indeed if
%\begin{equation*}
%\bar E \subset \bigcap_n E^n,
%\end{equation*}
%then it is immediate to see that $\mathcal M^{\bar E}$ contains all the $£\mathcal M^{E_n}$.
%\end{proof}

Consider a family of measure sub-algebras $\{\mathcal M^\alpha\}_\alpha$, $\mathcal M^\alpha \subset \mathcal M$, closed under countable union: for every sequence $\mathcal M^{\alpha_i}$ there exists $\mathcal M^{\bar \alpha}$ such that
\begin{equation*}
\bigcup_i \mathcal M^{\alpha_i} \subset \mathcal M^{\bar \alpha}.
\end{equation*}
The following result shows that there is a maximal element $\mathcal M^{\bar \alpha}$.

\begin{proposition}
\label{Prop:max_measure_alg}
There exists $\mathcal M^{\bar \alpha} \in \{\mathcal M^\alpha\}_\alpha$ such that $\mathcal M^\alpha \subset \mathcal M^{\bar \alpha}$ for all $\alpha$.
\end{proposition}

The proof is just the observation that every subset of a separable metric space has a countable dense subset.

\begin{proof}
Consider the measure sub-algebra generated by the union of all $\mathcal M^\alpha$: it corresponds to the closure of the algebra generated by the algebra $\cup_\alpha \mathcal M^\alpha$ w.r.t. the distance \eqref{Equa:dist_meas_alg}. In particular, being countably generated, it coincides with the measure sub-algebra generated by countably many elements $\Omega_n \in \cup_\alpha \mathcal M^\alpha$.

Let the sub-algebras $\mathcal M^{\alpha_n}$ be such that $\Omega_n \in \mathcal M^{\alpha_n}$, and let $\mathcal M^{\bar \alpha}$ be measure sub-algebra containing all of them. This is the measure sub-algebra of the statement.
\end{proof}

A symmetric result considers the case where the family $\{\mathcal M^\alpha\}_\alpha$ is closed under countable intersection: for every sequence $\{\mathcal M^{\alpha_n}\}_n$ there exists $\mathcal M^{\bar \alpha}$ such that
\begin{equation}
\label{Equa:inter_clos}
\mathcal M^{\bar \alpha} \subset \bigcap_n \mathcal M^{\alpha_n}.
\end{equation}

\begin{proposition}
\label{Prop:min_measure_alg}
There exists $\mathcal M^{\bar \alpha} \in \{\mathcal M^\alpha\}_\alpha$ such that $\mathcal M^{\bar \alpha} \subset \mathcal M^{\alpha}$ for all $\alpha$.
\end{proposition}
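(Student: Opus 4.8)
The plan is to mirror the proof of Proposition \ref{Prop:max_measure_alg}, but to replace the "countably generated" reduction used there by a monotone real-valued functional on sub-algebras. The key difficulty to overcome is that hypothesis \eqref{Equa:inter_clos} only furnishes a member of the family lying below the intersection of \emph{countably} many $\mathcal M^\alpha$, whereas the conclusion asks for a member below the intersection of the \emph{entire}, possibly uncountable, family. Separability of $(\mathcal M,\mu)$ is exactly what will bridge this gap, by letting me isolate a single countable subfamily that already captures the infimum.

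Concretely, I would fix a countable dense set $\{\Omega_k\}_k\subset\mathcal M$ for the distance \eqref{Equa:dist_meas_alg}, set $d(\Omega_k,\mathcal M')=\inf_{A\in\mathcal M'}\mu(\Omega_k\Delta A)$ for any closed sub-algebra $\mathcal M'$, and introduce
\[
\Phi(\mathcal M')=\sum_k 2^{-k}\, d(\Omega_k,\mathcal M').
\]
Since every sub-algebra contains $\emptyset$, each summand is $\le\mu(\Omega_k)\le 1$, so $\Phi$ is finite; and if $\mathcal M'\subset\mathcal M''$ then $d(\Omega_k,\mathcal M'')\le d(\Omega_k,\mathcal M')$, so $\Phi$ is monotone \emph{decreasing} under inclusion. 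Thus a \emph{minimal} sub-algebra corresponds to a \emph{maximal} value of $\Phi$, which turns the problem into a maximization that I can attack with an approximating sequence.

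Next I would put $s=\sup_\alpha\Phi(\mathcal M^\alpha)$, choose indices $\alpha_n$ with $\Phi(\mathcal M^{\alpha_n})\to s$, and invoke \eqref{Equa:inter_clos} to obtain $\mathcal M^{\bar\alpha}$ in the family with $\mathcal M^{\bar\alpha}\subset\bigcap_n\mathcal M^{\alpha_n}$. Monotonicity then gives $\Phi(\mathcal M^{\bar\alpha})\ge\Phi(\mathcal M^{\alpha_n})$ for all $n$, whence $\Phi(\mathcal M^{\bar\alpha})=s$. To check $\mathcal M^{\bar\alpha}$ is minimal, I fix an arbitrary $\alpha$ and apply \eqref{Equa:inter_clos} to the pair $\{\mathcal M^{\bar\alpha},\mathcal M^\alpha\}$, getting $\mathcal M^\gamma\subset\mathcal M^{\bar\alpha}\cap\mathcal M^\alpha$; monotonicity forces $\Phi(\mathcal M^\gamma)\ge\Phi(\mathcal M^{\bar\alpha})=s$, hence $\Phi(\mathcal M^\gamma)=\Phi(\mathcal M^{\bar\alpha})$.

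The final step, where I expect the only genuine care to be needed, is to upgrade the equality $\Phi(\mathcal M^\gamma)=\Phi(\mathcal M^{\bar\alpha})$ with $\mathcal M^\gamma\subset\mathcal M^{\bar\alpha}$ into the equality of algebras $\mathcal M^\gamma=\mathcal M^{\bar\alpha}$. Here I would use density together with closedness: were there some $A\in\mathcal M^{\bar\alpha}\setminus\mathcal M^\gamma$, then $d(A,\mathcal M^\gamma)>0$ since $\mathcal M^\gamma$ is closed, and picking $\Omega_k$ within $\epsilon$ of $A$ would give $d(\Omega_k,\mathcal M^{\bar\alpha})\le\epsilon$ while $d(\Omega_k,\mathcal M^\gamma)\ge d(A,\mathcal M^\gamma)-\epsilon$, making the $k$-th summand of $\Phi(\mathcal M^\gamma)$ strictly exceed that of $\Phi(\mathcal M^{\bar\alpha})$ for small $\epsilon$ — contradicting the equality of the two $\Phi$-values. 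Therefore $\mathcal M^{\bar\alpha}=\mathcal M^\gamma\subset\mathcal M^\alpha$, and as $\alpha$ was arbitrary, $\mathcal M^{\bar\alpha}\subset\mathcal M^\alpha$ for all $\alpha$, which is the assertion.
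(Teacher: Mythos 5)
Your proof is correct, but it follows a genuinely different route from the paper's. The paper argues by covering: for each $\Omega \notin \bigcap_\alpha \mathcal M^\alpha$ it picks $\alpha_\Omega$ with $\Omega \notin \mathcal M^{\alpha_\Omega}$ and, using that $\mathcal M^{\alpha_\Omega}$ is closed, a ball $B^{\mathcal M}_{r_\Omega}(\Omega)$ disjoint from $\mathcal M^{\alpha_\Omega}$; by separability (the Lindel\"of property) countably many such balls cover $\mathcal M \setminus \bigcap_\alpha \mathcal M^\alpha$, which forces $\bigcap_\alpha \mathcal M^\alpha = \bigcap_n \mathcal M^{\alpha_n}$, and then \eqref{Equa:inter_clos} is applied once to this countable subfamily. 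You instead run a variational argument: the functional $\Phi$, built from distances to a countable dense set, is finite, decreasing under inclusion, and --- this is your key lemma --- \emph{strictly} increasing under passage to a proper closed sub-algebra (density plus closedness of $\mathcal M^\gamma$ give the strict inequality in the $k$-th summand), so the member realizing $\sup_\alpha \Phi$, produced by \eqref{Equa:inter_clos} along a maximizing sequence and again for each pair $\{\mathcal M^{\bar\alpha},\mathcal M^\alpha\}$, must be contained in every $\mathcal M^\alpha$. Both proofs rest on the same three ingredients (separability of $(\mathcal M,d)$, closedness of the members, and stability under countable intersections), but they extract countability differently: Lindel\"of covering versus a maximizing sequence for a numerical invariant. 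The paper's argument is shorter and exhibits the structural fact that the full intersection is already a countable intersection; yours gives a quantitative certificate of minimality, invokes \eqref{Equa:inter_clos} twice rather than once, and has the aesthetic advantage that the same dense-sequence device also proves Proposition \ref{Prop:max_measure_alg}, so the two dual statements are handled by one mechanism.
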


The proof follows by observing that every union of open balls is obtained by considering only countably many balls.

\begin{proof}
First of all, for all
\begin{equation*}
\Omega \notin \bigcap_\alpha \mathcal M^\alpha
\end{equation*}
there is $\alpha_\Omega$ such that $\Omega \notin \mathcal M^{\alpha_\Omega}$, and then there exists $r_\Omega > 0$ such that the set
\begin{equation*}
B^{\mathcal M}_{R_\Omega}(\Omega) = \big\{ \Omega' : d(\Omega,\Omega') < r_\Omega \big\}
\end{equation*}
has empty intersection with $\mathcal M^{\alpha_\Omega}$.

%By arranging the family of balls
%\begin{equation*}
%\begin{split}
%\bigg\{ B^{\mathcal M}_{R_\Omega}(\Omega), \Omega \notin \bigcap_\alpha \mathcal M^\alpha \bigg\} &= \bigcup_{i \in \Z} \bigg\{ B^{\mathcal M}_{R_\Omega}(\Omega), \Omega \notin \bigcap_\alpha \mathcal M^\alpha, r_\Omega \in 2^{i}(1,2] \bigg\} \\
%&= \bigcup_{i \in \Z} \big\{ B^{\mathcal M}_{R_\Omega}(\Omega), \Omega \in \mathcal F_i \big\},
%\end{split}
%\end{equation*}
%the separability implies that each family $\mathcal F_i$ of centers $\Omega$ is covered by at most countably many balls $B^{\mathcal M}_{R_\Omega}$, so that w
By separability, we need only a countable family of balls $\{B^{\mathcal M}_{R_{\Omega_n}}(\Omega_n)\}_n$ to cover
\begin{equation*}
\mathcal M \setminus \bigcap_\alpha \mathcal M^\alpha.
\end{equation*}
Hence we deduce that if
$$
B^{\mathcal M}_{R_{\Omega_n}}(\Omega_n) \cap \mathcal M^{\alpha_n} = \emptyset,
$$
then
\begin{equation*}
\bigcap_\alpha \mathcal M^\alpha = \bigcap_n \mathcal M^{\alpha_n},
\end{equation*}
which is the statement by \eqref{Equa:inter_clos}.
\end{proof}

In terms of $\sigma$-algebras, the two statements above can be rewritten as follows. Consider a family of $\mu$-complete $\sigma$-algebras $\mathcal M^\alpha \subset \mathcal M$, where $\mathcal M$ is the $\mu$-completion of a separable $\sigma$-algebra.

\begin{corollary}
\label{Cor:sigma_min_max}
The following holds:
\begin{enumerate}
\item if $\{\mathcal M^\alpha\}_\alpha$ is closed under countable union, then there exists an element $\mathcal M^{\bar \alpha}$ such that
\begin{equation*}
\forall \alpha, \Omega \in \mathcal M^\alpha \, \exists \Omega' \in \mathcal M^{\bar \alpha} \big( \mu (\Omega \Delta \Omega') = 0 \big);
\end{equation*}
\item if $\{\mathcal M^\alpha\}_\alpha$ is closed under countable intersection, then there exists an element $\mathcal M^{\bar \alpha}$ such that
\begin{equation*}
\forall \alpha, \Omega \in \mathcal M^{\bar \alpha} \, \exists \Omega' \in \mathcal M^{\alpha} \big( \mu (\Omega \Delta \Omega') = 0 \big).
\end{equation*}
\end{enumerate}
\end{corollary}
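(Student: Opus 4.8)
The plan is to recognize this corollary as nothing more than the transcription of Propositions \ref{Prop:max_measure_alg} and \ref{Prop:min_measure_alg} from the setting of measure algebras to that of $\sigma$-algebras, and to carry the two extremal-element statements across the canonical quotient. Write $q : \mathcal M \to \mathcal M/\mathcal N$ for the projection modulo the ideal $\mathcal N$ of $\mu$-null sets, so that $\mathcal M/\mathcal N$ is the separable measure algebra carrying the distance \eqref{Equa:dist_meas_alg}. Each $\mu$-complete $\sigma$-algebra $\mathcal M^\alpha \subset \mathcal M$ maps to a measure sub-algebra $q(\mathcal M^\alpha)$, and the $\mu$-completeness guarantees that these images are genuine (separable, closed) measure sub-algebras, so that the earlier propositions apply verbatim to the family $\{q(\mathcal M^\alpha)\}_\alpha$.

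First I would verify that the hypotheses transfer through $q$. Reading ``closed under countable union'' for the $\sigma$-algebras as the statement that for every sequence $\{\alpha_i\}$ there is $\bar\alpha$ with $\mathcal M^{\alpha_i} \subset \mathcal M^{\bar\alpha}$ for all $i$, one immediately gets $\bigcup_i q(\mathcal M^{\alpha_i}) \subset q(\mathcal M^{\bar\alpha})$, which is exactly the closure-under-countable-union hypothesis of Proposition \ref{Prop:max_measure_alg}. Symmetrically, closure under countable intersection of the $\mathcal M^\alpha$ transfers to closure under countable intersection of the $q(\mathcal M^\alpha)$, as required by Proposition \ref{Prop:min_measure_alg}.

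Then I would simply apply the propositions. In case (1), Proposition \ref{Prop:max_measure_alg} produces $\bar\alpha$ with $q(\mathcal M^\alpha) \subset q(\mathcal M^{\bar\alpha})$ for every $\alpha$; in case (2), Proposition \ref{Prop:min_measure_alg} produces $\bar\alpha$ with $q(\mathcal M^{\bar\alpha}) \subset q(\mathcal M^\alpha)$ for every $\alpha$. It remains only to unwind each of these measure-algebra inclusions into the form demanded by the corollary.

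The one point that deserves care --- and is really the whole content beyond invoking the propositions --- is the dictionary between inclusion of measure algebras and the ``up to a null set'' inclusion of $\sigma$-algebras. Here I would argue directly: $q(\Omega) \in q(\mathcal M^{\bar\alpha})$ means $q(\Omega) = q(\Omega')$ for some $\Omega' \in \mathcal M^{\bar\alpha}$, which by the definition of $q$ says precisely $\mu(\Omega \Delta \Omega') = 0$. Thus $q(\mathcal M^\alpha) \subset q(\mathcal M^{\bar\alpha})$ unwinds to conclusion (1), that every $\Omega \in \mathcal M^\alpha$ matches some $\Omega' \in \mathcal M^{\bar\alpha}$ up to a null set, while $q(\mathcal M^{\bar\alpha}) \subset q(\mathcal M^\alpha)$ unwinds to conclusion (2) with the roles of $\bar\alpha$ and $\alpha$ reversed. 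I expect no serious obstacle: the separability already exploited in the two propositions does all the work, and this corollary is purely a matter of bookkeeping across the quotient.
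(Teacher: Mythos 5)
Your proposal is correct and matches the paper's (implicit) argument: the paper offers no separate proof of Corollary \ref{Cor:sigma_min_max}, presenting it as a direct restatement of Propositions \ref{Prop:max_measure_alg} and \ref{Prop:min_measure_alg} in the language of $\sigma$-algebras, which is precisely the quotient-and-unwind bookkeeping you carry out. Your explicit check that inclusions and closure hypotheses transfer through the projection $q$ onto the measure algebra, and that $q(\Omega)=q(\Omega')$ means $\mu(\Omega\,\Delta\,\Omega')=0$, is exactly the dictionary the paper takes for granted.
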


The same statement can be rewritten in terms of equivalence relations, by reversing the intersection and union.

\begin{corollary}
\label{Cor:equiv_min_max}
The following holds:
\begin{enumerate}
\item if $\{E^\alpha\}_\alpha$ is closed under countable intersection, then there exists an element $E^{\bar \alpha}$ such that if $\mathcal M^\alpha$ is the complete $\sigma$-algebra generated by $E^\alpha$ according to \eqref{Equa:gen-sigma} then
\begin{equation*}
\forall \alpha, \Omega \in \mathcal M^\alpha \, \exists \Omega' \in \mathcal M^{\bar \alpha} \big( \mu (\Omega \Delta \Omega') = 0 \big);
\end{equation*}
\item if $\{E^\alpha\}_\alpha$ is closed under countable union, then there exists an element $E^{\bar \alpha}$ such that if $\mathcal M^\alpha$ is the complete $\sigma$-algebra generated by $E^\alpha$ according to \eqref{Equa:gen-sigma} then
\begin{equation*}
\forall \alpha, \Omega \in \mathcal M^{\bar \alpha} \, \exists \Omega' \in \mathcal M^{\alpha} \big( \mu (\Omega \Delta \Omega') = 0 \big).
\end{equation*}
\end{enumerate}
\end{corollary}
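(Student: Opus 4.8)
The plan is to deduce both statements from Corollary \ref{Cor:sigma_min_max} by exploiting the order-reversing nature of the correspondence $E \mapsto \mathcal M^E$ defined in \eqref{Equa:gen-sigma}, with $\mathcal M^\alpha$ identified with $\mathcal M^{E^\alpha}$ as in the statement. The first observation I would record is that if $E \subset E'$ as subsets of $X \times X$ (so $E'$ has the larger equivalence classes), then every $E'$-saturated set is automatically $E$-saturated, whence
\begin{equation*}
E \subset E' \quad \Longrightarrow \quad \mathcal M^{E'} \subset \mathcal M^{E}.
\end{equation*}
Thus passing from equivalence relations to their saturated $\sigma$-algebras reverses inclusions, and therefore swaps the roles of countable intersection and countable union. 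Everything then becomes a transcription of Corollary \ref{Cor:sigma_min_max} through this contravariant map.

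For part (1), assume $\{E^\alpha\}_\alpha$ is closed under countable intersection. Given a sequence $\{E^{\alpha_n}\}_n$, the set-theoretic intersection $\bigcap_n E^{\alpha_n}$ is again an equivalence relation and by hypothesis equals some $E^{\bar\alpha}$ in the family. Since $E^{\bar\alpha} \subset E^{\alpha_n}$ for every $n$, the reversal above gives $\mathcal M^{E^{\alpha_n}} \subset \mathcal M^{E^{\bar\alpha}}$, hence $\bigcup_n \mathcal M^{E^{\alpha_n}} \subset \mathcal M^{E^{\bar\alpha}}$. This is exactly the statement that $\{\mathcal M^{E^\alpha}\}_\alpha$ is closed under countable union in the sense of Corollary \ref{Cor:sigma_min_max}(1), so that result produces a maximal member $\mathcal M^{\bar\alpha} = \mathcal M^{E^{\bar\alpha}}$ with the approximation property asserted, which is precisely the conclusion of part (1).

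For part (2), assume $\{E^\alpha\}_\alpha$ is closed under countable union, understood as the equivalence relation $\bigvee_n E^{\alpha_n}$ generated by $\bigcup_n E^{\alpha_n}$ (the bare union need not be transitive). The key identity is that a set is saturated for $\bigvee_n E^{\alpha_n}$ exactly when it is saturated for each $E^{\alpha_n}$, i.e.
\begin{equation*}
\mathcal M^{\bigvee_n E^{\alpha_n}} = \bigcap_n \mathcal M^{E^{\alpha_n}}.
\end{equation*}
Setting $E^{\bar\alpha} = \bigvee_n E^{\alpha_n}$, this yields $\mathcal M^{E^{\bar\alpha}} \subset \mathcal M^{E^{\alpha_n}}$ for all $n$, so $\{\mathcal M^{E^\alpha}\}_\alpha$ is closed under countable intersection in the sense of Corollary \ref{Cor:sigma_min_max}(2); invoking that statement gives the minimal element $\mathcal M^{\bar\alpha}$ with the required property.

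The only non-bookkeeping points, which I would verify carefully, are the two translation facts used above: that an $E'$-saturated set is $E$-saturated whenever $E \subset E'$, and the identity $\mathcal M^{\bigvee_n E^{\alpha_n}} = \bigcap_n \mathcal M^{E^{\alpha_n}}$. Both are elementary—the first is immediate from the definition of saturation, while the second follows because closure under a generating family of relations coincides with closure under the generated equivalence relation, the saturated sets forming a $\sigma$-algebra stable under the relevant operations. I do not expect any genuine obstacle; the content of the corollary is entirely carried by Corollary \ref{Cor:sigma_min_max}, and the present statement is its reformulation under the contravariant dictionary $E \mapsto \mathcal M^E$.
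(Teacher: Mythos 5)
Your proof is correct and is exactly the argument the paper intends: the paper derives this corollary from Corollary \ref{Cor:sigma_min_max} by the same order-reversing dictionary $E \mapsto \mathcal M^E$, which is what you spell out (including the two translation facts, monotonicity of saturation and $\mathcal M^{\bigvee_n E^{\alpha_n}} = \bigcap_n \mathcal M^{E^{\alpha_n}}$). The only cosmetic difference is that you read ``closed under countable intersection/union'' as the generated relation itself lying in the family, while the paper's usage (cf.\ the proof of Proposition \ref{Prop:exi_min_pi}) only requires some member below the intersection (resp.\ above the union); your argument goes through verbatim under that weaker reading.
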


%In both cases for shortness we will denote the $\sigma$-algebras $\mathcal M^{\bar \alpha}$ or the equivalence relation $E^{\bar \alpha}$ minimal or maximal

\subsection{Application to test plans}
\label{Ss:appli_test}

The statement of Proposition \ref{Prop:max_measure_alg} in our setting will be used as follows.

\begin{definition}
\label{Def:eqvui_genset}
Let $\{\Omega_\alpha\}_\alpha$ be a family of subsets of a set $X$. The equivalence relation generated by $\{\Omega_\alpha\}_\alpha$ is the smallest equivalence relation $E$ such that
\begin{equation*}
\bigcup_\alpha \Omega_\alpha \times \Omega_\alpha \subset E.
\end{equation*}
\end{definition}

\begin{lemma}
\label{Lem:min_E}
The equivalence relation generated by $\{\Omega_\alpha\}_\alpha$ is the following:
\begin{equation*}
x E x' \quad \Longleftrightarrow \quad \exists n \in \N, \{\Omega_{\alpha_i}\}_{i=1}^n , x_i,x_{i+1} \in \Omega_{\alpha_i} \big( x_1 = x \ \wedge \ x_{n+1} = x' \big).
\end{equation*}
\end{lemma}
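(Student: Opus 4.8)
The plan is to denote by $\hat E$ the relation defined by the right-hand side of the claimed formula, and to prove that $\hat E$ coincides with the equivalence relation $E$ generated by $\{\Omega_\alpha\}_\alpha$ in the sense of Definition \ref{Def:eqvui_genset}. The argument splits into two inclusions, preceded by the verification that $\hat E$ is itself an equivalence relation. The whole proof is a routine manipulation of finite chains, and the only point requiring a word of care is reflexivity.

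First I would check that $\hat E$ is an equivalence relation. Reflexivity $x \hat E x$ is obtained from the trivial one-point chain (the empty chain $n=0$ with $x_1 = x = x'$, the condition on consecutive points being then vacuous); equivalently, one recalls that a generated equivalence relation always contains the diagonal, so for $x$ lying in no $\Omega_\alpha$ its class is simply $\{x\}$ in both $E$ and $\hat E$. Symmetry follows by reversing a chain: if $x_1 = x,\dots,x_{n+1} = x'$ realizes $x \hat E x'$ with $x_i,x_{i+1} \in \Omega_{\alpha_i}$, then reading the points and the sets backwards is again a chain of the required form, now realizing $x' \hat E x$. Transitivity follows by concatenation: a chain from $x$ to $x'$ followed by a chain from $x'$ to $x''$ glues, along the common endpoint $x'$, to a chain of the same type from $x$ to $x''$.

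Next I would establish $E \subseteq \hat E$. Since $\hat E$ is an equivalence relation and $E$ is by definition the smallest equivalence relation containing $\bigcup_\alpha \Omega_\alpha \times \Omega_\alpha$, it suffices to check $\bigcup_\alpha \Omega_\alpha \times \Omega_\alpha \subseteq \hat E$. This is immediate: for $(x,x') \in \Omega_\alpha \times \Omega_\alpha$, the single-step chain $n=1$, $\Omega_{\alpha_1} = \Omega_\alpha$, $x_1 = x$, $x_2 = x'$ witnesses $x \hat E x'$.

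Finally I would show $\hat E \subseteq E$. Let $E'$ be any equivalence relation with $\bigcup_\alpha \Omega_\alpha \times \Omega_\alpha \subseteq E'$; I claim $\hat E \subseteq E'$, and applying this with $E' = E$ (the minimal such relation) concludes. Indeed, if $x \hat E x'$ via a chain $x_1 = x,\dots,x_{n+1} = x'$ with $x_i,x_{i+1} \in \Omega_{\alpha_i}$, then each consecutive pair satisfies $(x_i,x_{i+1}) \in \Omega_{\alpha_i} \times \Omega_{\alpha_i} \subseteq E'$, and transitivity of $E'$ gives $(x,x') = (x_1,x_{n+1}) \in E'$. Combining the two inclusions yields $\hat E = E$. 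There is no genuine obstacle here: the statement is the standard description of a generated equivalence relation as a reachability relation, and the mild subtlety is only the bookkeeping of the empty chain for reflexivity of isolated points.
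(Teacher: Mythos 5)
Your proof is correct and is simply the careful write-up of the argument the paper regards as obvious: the paper's entire proof reads ``Immediate from the definition,'' and your double-inclusion argument (chains form an equivalence relation containing the generators; any equivalence relation containing the generators absorbs chains by transitivity) is exactly the standard reasoning being invoked. The care you take with the empty-chain/diagonal issue for points lying in no $\Omega_\alpha$ is a reasonable touch, though the paper does not consider it worth mentioning.
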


\begin{proof}
Immediate from the definition.
\end{proof}

We apply the above construction to our case, the sets $\Omega_\alpha$ being the range of the curves $\{\gamma \in A_\pi\}$.

\begin{definition}
\label{Def:equiv_rel_ch}
We will say that two points $x,x'$ in $X$ are equivalent according to the set $A_\pi \subset \Gamma$ and we will write $x \gls{EApi} x'$ if they are equivalent according to Definition \ref{Def:eqvui_genset} where the sets $\Omega_\alpha$ are the family of sets $\{\gamma([0,1]), \gamma \in A_\pi\}$. Equivalently, there exists a finite set $\{\gamma_i\}_i \subset A_\pi$ of curves and a path $\gamma \in \Gamma$ such that
\begin{equation*}
\Graph \gamma \subset \bigcup_i \Graph \gamma_i \quad \text{and} \quad \gamma(0) = x, \ \gamma(1) = x'.
\end{equation*}
\end{definition}

For every test plan $\pi \in \adm(\mu)$ consider the family of sets $A_\pi$ satisfying Theorem \ref{theo:neglinters} (i.e. every closed subloop of $A_\pi$ is of integer cicuitation), and let $E^{A_\pi}$, $\mathcal M^{E^{A_\pi}}$ be the corresponding equivalence relation and $\sigma$-algebra, respectively (see \eqref{Equa:gen-sigma}). %For a given equivalence relation $E^s$ constructed as in Definition \eqref{Def:equiv_rel_ch}, let $\mathcal M^{s}$ is the $\sigma$-subalgebra generated by $E^{s}$ according to \eqref{Equa:gen-sigma}.

\begin{proposition}
\label{Prop:exi_min_pi}
There exists \gls{barApi} such that $\pi(\bar A_\pi) = 1$ and $\mathcal M^{E^{\bar A_\pi}}$ is maximal. %a selection $\bar s : \adm(\mu) \to \mathcal K(\Gamma_c)^\N$ such that
%\begin{itemize}
%\item if $E^{\bar s}$ is the equivalence relation constructed according to Definition \ref{Def:equiv_rel_ch},
%\item if $\mathcal M^{\bar s}$ is the $\sigma$-subalgebra generated by $E^{\bar s}$ according to \eqref{Equa:gen-sigma},
%\end{itemize}
%the measure subalgebra $(\mathcal M^{\bar s},\mu)$ is maximal in the family $(\mathcal M^s,\mu)$.
\end{proposition}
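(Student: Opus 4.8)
The plan is to realize $\mathcal M^{E^{\bar A_\pi}}$ as the maximal element produced by Proposition \ref{Prop:max_measure_alg}, applied to the family of measure sub-algebras
\begin{equation*}
\mathcal F = \big\{ \mathcal M^{E^{A_\pi}} : \pi(A_\pi) = 1, \ A_\pi(n) \subset \mathcal I_c \ \text{for all } n \big\},
\end{equation*}
that is, the family indexed by all full $\pi$-measure sets $A_\pi$ satisfying Theorem \ref{theo:neglinters}; this family is nonempty by Theorem \ref{theo:neglinters} itself. The structural fact I would isolate first is an order-reversing correspondence: if $A'_\pi \subset A_\pi$, then any finite chain of curves realizing $x \, E^{A'_\pi} x'$ also realizes $x \, E^{A_\pi} x'$ in the sense of Definition \ref{Def:equiv_rel_ch}, so $E^{A'_\pi} \subset E^{A_\pi}$; consequently every $E^{A_\pi}$-saturated set is $E^{A'_\pi}$-saturated and $\mathcal M^{E^{A_\pi}} \subset \mathcal M^{E^{A'_\pi}}$ by \eqref{Equa:gen-sigma}. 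In words, shrinking the family of curves enlarges the $\sigma$-algebra of saturated sets.

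To invoke Proposition \ref{Prop:max_measure_alg} I must verify that $\mathcal F$ is closed under countable union. I would take an arbitrary countable subfamily $\{\mathcal M^{E^{A_\pi^n}}\}_{n \in \N} \subset \mathcal F$ and set $\bar A = \bigcap_n A_\pi^n$. Since each $A_\pi^n$ is $\pi$-conegligible, so is $\bar A$. Moreover $\bar A$ again satisfies the hypothesis of Theorem \ref{theo:neglinters}: any subloop of $\bar A$ is assembled from finitely many curves of $\bar A \subset A_\pi^1$, hence is already a subloop of $A_\pi^1$ and therefore lies in $\mathcal I_c$; thus $\mathcal M^{E^{\bar A}} \in \mathcal F$. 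By the order-reversing correspondence, $\bar A \subset A_\pi^n$ gives $\mathcal M^{E^{A_\pi^n}} \subset \mathcal M^{E^{\bar A}}$ for every $n$, whence $\bigcup_n \mathcal M^{E^{A_\pi^n}} \subset \mathcal M^{E^{\bar A}}$. This is exactly the closure under countable union demanded by Proposition \ref{Prop:max_measure_alg} (equivalently by Corollary \ref{Cor:sigma_min_max}(1)).

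With closure in hand, Proposition \ref{Prop:max_measure_alg} delivers a distinguished member $\mathcal M^{E^{\bar A_\pi}} \in \mathcal F$ containing every $\mathcal M^{E^{A_\pi}}$ in the family (up to $\mu$-negligible sets), which is the required maximality; and by the very definition of $\mathcal F$ the associated $\bar A_\pi$ has $\pi(\bar A_\pi) = 1$. I expect the only delicate points to be bookkeeping: keeping straight the three nested reversals (curve sets $\mapsto$ equivalence relations $\mapsto$ saturated $\sigma$-algebras) and checking that the intersection $\bar A$ inherits the integer-circuitation property at the level of \emph{subloops} rather than merely of individual curves. Neither is a genuine obstacle, since the subloop condition $A(n) \subset \mathcal I_c$ is monotone under inclusion of curve families; the entire content of the statement is carried by the separability of $\mathcal M$ already exploited in the proof of Proposition \ref{Prop:max_measure_alg}.
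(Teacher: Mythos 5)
Your proposal is correct and follows essentially the same route as the paper: the paper verifies that the family $\{E^{A_\pi}\}$ is closed under countable intersection (using $\pi(\cap_i A^i_\pi)=1$ and $E^{\cap_i A^i_\pi} \subset \cap_i E^{A^i_\pi}$) and invokes the abstract maximality result, which is exactly your closure-under-countable-union argument for the measure algebras read through the order-reversing correspondence of Corollary \ref{Cor:equiv_min_max}. Your explicit check that the intersection $\bar A$ inherits the subloop condition $\bar A(n) \subset \mathcal I_c$ is a detail the paper leaves implicit, but it is trivially true by monotonicity, as you note.
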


In particular, for every subset $A_\pi \subset \bar A_\pi$ such that $\pi(A_\pi) = 1$ it holds %that
\begin{equation}
\label{Equa:remov_1}
\mathcal M^{E^{\bar A_\pi}} = \mathcal M^{E^{A_\pi}} \quad \text{as measure algebras.}
\end{equation}

We will denote $\mathcal M^{E^{\bar A_\pi}}$ as \gls{calMpi}, and the corresponding equivalence relation by $\gls{Epi} = E^{\bar A_{\bar \pi}}$. We stress that the only unique object is $\mathcal M^\pi$ considered as a complete $\sigma$-subalgebra of $\mathcal M$ or as a measure algebra.

\begin{proof}
We need only to verify that the family $\{E^{A_\pi}\}_{A_\pi}$ is closed under countable intersection: this follows from the implications
\begin{equation*}
\forall i \big( \pi(A_{\pi}^i) = 1 \big) \quad \Rightarrow \quad \pi \bigg( \bigcap_i A^i_\pi \bigg) = 1,
\end{equation*}
and
\begin{equation*}
E^{\cap_i A^i_\pi} \subset \bigcap_i E^{A^i_\pi}. \qedhere
\end{equation*}
\end{proof}

%In other words, for every set selection $s$ and for every $\Omega \subset \mathcal M^s$ there is a set $\Omega' \in \mathcal M^{\bar s}$ such that $\mu(\Omega \Delta \Omega') = 0$.
%
%In particular we deduce the following property, which is an immediate consequence of the definition of $E^{\bar s}$.
%
%\begin{corollary}
%\label{Cor:conce_E_bars}
%Every test plan $\pi$ is concentrated on curves $\gamma \in \Gamma_c$ such that $\gamma(s) E^{\bar s} \Gamma(t)$ for all $s,t \in [0,1]$.
%\end{corollary}
%
%The application of Proposition \ref{Prop:min_measure_alg} yields that we actually need only one test plan to generate $E^{\bar s}$.

We next apply Proposition \ref{Prop:min_measure_alg} to sequences of test plans.

\begin{proposition}
\label{Prop:perf_test}
There exists $\gls{barpi} \in \adm(\mu)$ for every $\pi \in \adm(\mu)$ it holds
\begin{equation*}
\mathcal M^{\bar \pi} \subset \mathcal M^\pi \quad \text{as measure algebras.}
\end{equation*}
%Moreover we can assume that
%\begin{equation}
%\label{Equa:barpiequ_mu}
%\frac{1}{2} \mu \leq \int e(t)_\sharp \pi.
%\end{equation}
\end{proposition}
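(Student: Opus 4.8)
The plan is to apply Proposition~\ref{Prop:min_measure_alg} (or, in the $\sigma$-algebra language, Corollary~\ref{Cor:sigma_min_max}(2)) to the family of measure algebras $\{\mathcal M^\pi\}_{\pi \in \adm(\mu)}$, each regarded as a complete $\sigma$-subalgebra of the separable algebra $\mathcal M$. The abstract proposition then furnishes a minimal element, and since the family is indexed by $\adm(\mu)$ this minimal element is $\mathcal M^{\bar\pi}$ for some $\bar\pi \in \adm(\mu)$, which is exactly the assertion. Thus the only thing to verify is that $\{\mathcal M^\pi\}_\pi$ is closed under countable intersection: given any sequence $\{\pi_n\}_n \subset \adm(\mu)$, one must exhibit a single test plan whose measure algebra is contained in $\bigcap_n \mathcal M^{\pi_n}$.

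To produce such a plan I would take the normalized convex combination $\bar\pi = \big(\sum_n 2^{-n}/C_n\big)^{-1}\sum_n (2^{-n}/C_n)\,\pi_n$, which belongs to $\adm(\mu)$ by \eqref{Equa:countabl_pi}. Since $\pi_n \leq \mathrm{const}\cdot\bar\pi$ for each $n$, every set of full $\bar\pi$-measure has full $\pi_n$-measure; in particular the distinguished set $\bar A_{\bar\pi}$ given by Proposition~\ref{Prop:exi_min_pi} satisfies $\pi_n(\bar A_{\bar\pi}) = 1$ for all $n$. Moreover $\bar A_{\bar\pi}$ is an admissible concentration set for each $\pi_n$ in the sense of Theorem~\ref{theo:neglinters}: every subloop of $\bar A_{\bar\pi}$ has integer circuitation, and this is a property of the curves alone, because by Corollary~\ref{Cor:Lambda_pi_curve} the integrand $\Lambda_\gamma$ is an intrinsic function of $\gamma([0,1])$ (given $v$), so the value of $\int \Lambda_\gamma \dot{|\gamma|}$ does not depend on the plan used to read it off. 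Hence $E^{\bar A_{\bar\pi}}$ competes in the definition of $\mathcal M^{\pi_n}$, and since $\mathcal M^{\pi_n}$ is maximal among the $\mathcal M^{E^{A_{\pi_n}}}$ (Proposition~\ref{Prop:exi_min_pi}) we obtain $\mathcal M^{\bar\pi} = \mathcal M^{E^{\bar A_{\bar\pi}}} \subseteq \mathcal M^{\pi_n}$ for every $n$, i.e. $\mathcal M^{\bar\pi} \subseteq \bigcap_n \mathcal M^{\pi_n}$.

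The main obstacle is precisely this transfer step, namely ensuring that the selection of the sets $A_\pi$ — phrased through the circuitation of subloops, and hence a priori through $\Lambda$, which depends on the plan (Remark~\ref{Rem:Lambda_pi}) — is in fact plan-independent once restricted to a fixed family of curves. Corollary~\ref{Cor:Lambda_pi_curve} is exactly what removes this dependence, so no genuine difficulty remains; the remaining points (that the convex combination is admissible, and that the minimal algebra delivered by Proposition~\ref{Prop:min_measure_alg} is realized by an honest test plan) are immediate from \eqref{Equa:countabl_pi} and from the fact that the family is indexed by $\adm(\mu)$ itself.
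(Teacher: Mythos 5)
Your overall skeleton is the same as the paper's: form the normalized convex combination \eqref{Equa:countabl_pi} of a given sequence of test plans, prove $\mathcal M^{\bar\pi} \subset \mathcal M^{\pi_n}$ for each $n$ so that the family $\{\mathcal M^\pi\}_{\pi\in\adm(\mu)}$ is closed under countable intersection, and then invoke Proposition~\ref{Prop:min_measure_alg} (Corollary~\ref{Cor:sigma_min_max}(2)), observing that the minimal element is realized by an honest test plan since the family is indexed by $\adm(\mu)$. The divergence — and the gap — is in how you establish $\mathcal M^{\bar\pi} \subset \mathcal M^{\pi_n}$.

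You claim that $\bar A_{\bar\pi}$ is itself a legitimate competitor in the family defining $\mathcal M^{\pi_n}$ because, by Corollary~\ref{Cor:Lambda_pi_curve}, the circuitation of subloops \virgolette{does not depend on the plan used to read it off}. That corollary does not say this: it is a statement about one fixed plan $\pi$ — for $\pi$-a.e.\ $\gamma$ the function $\Lambda_\gamma$ factors through $\gamma([0,1])$ — and it compares nothing across plans; Remark~\ref{Rem:Lambda_pi} stresses precisely that $\Lambda$ is built plan-by-plan, as an $L^1(\mathcal L^1\times\pi)$-limit, and for mutually singular plans there is no relation at all between the two readings of $v$ on a common curve. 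What actually rescues your step is the absolute continuity you already noted: since $\pi_n \leq C'_n\, \bar\pi$, both $\Lambda^{\pi_n}$ and $\Lambda^{\bar\pi}$ are $L^1(\mathcal L^1\times\pi_n)$-limits of the same plan-independent simple functionals \eqref{Equa:simple_plan_ct}, hence they coincide $\mathcal L^1\times\pi_n$-a.e.; you must then still discard a $\pi_n$-negligible set $N$ of curves before $\bar A_{\bar\pi}\setminus N$ becomes a competitor for $\pi_n$, and conclude via $E^{\bar A_{\bar\pi}\setminus N} \subset E^{\bar A_{\bar\pi}}$, i.e.\ $\mathcal M^{\bar\pi} \subset \mathcal M^{E^{\bar A_{\bar\pi}\setminus N}} \subset \mathcal M^{\pi_n}$. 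The paper sidesteps the cross-plan comparison altogether: it sets $\hat A_{\pi_n} = \bar A_{\bar\pi}\cap\bar A_{\pi_n}$, which has full $\pi_n$-measure and is a subset of $\bar A_{\pi_n}$ — hence a competitor whose subloop circuitations are read through $\Lambda^{\pi_n}$ from the start, no transfer needed — then uses \eqref{Equa:remov_1} to get $\mathcal M^{E^{\hat A_{\pi_n}}} = \mathcal M^{\pi_n}$ and the inclusion $E^{\hat A_{\pi_n}} \subset E^{\bar A_{\bar\pi}}$ to conclude. Either patch closes your gap; as written, the appeal to Corollary~\ref{Cor:Lambda_pi_curve} alone does not, and since you flagged this transfer as the main obstacle, the proof is incomplete at exactly that point.
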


\begin{proof}
Consider a sequence of minimal equivalence relations $\{E^{\pi_i}\}_i$, and define the test plan
\begin{equation*}
\hat \pi = \frac{1}{\sum_i \frac{2^{-i}}{C_i}} \sum_i \frac{2^{-i}}{C_i} \pi_i,
\end{equation*}
where $C_i$ is the compressibility constant \eqref{Equa:curves_pi} (as before we normalized the sum of measures in order to be a probability). Let $\bar A_{\hat \pi},\bar A_{\pi_i}$ be a set of curves satisfying Proposition \ref{Prop:exi_min_pi} above for $\hat \pi,\pi_i$ respectively.

If we consider the sets
\begin{equation*}
\hat A_{\pi_i} = \bar A_{\hat \pi} \cap \bar A_{\pi_i},
\end{equation*}
then $\pi_i(\hat A_{\pi_i}) = 1$ because $\pi_i \ll \hat \pi$, and then by maximality of $\mathcal M^{E^{\bar A_{\pi_i}}}$ we conclude that by \eqref{Equa:remov_1} that
\begin{equation*}
\mathcal M^{E^{\bar A_{\pi_i}}} = \mathcal M^{E^{\hat A_{\pi_i}}} = \mathcal M^{\pi_i} \quad \text{as measure algebras.}
\end{equation*}
We thus deduce from $E^{\hat A_{\pi_i}} \subset E^{\bar A_{\hat \pi}}$ that $\mathcal M^{\hat \pi} \subset \mathcal M^{\pi_i}$ for all $i$, which shows that we are in the situation of Point (2) of Corollary \ref{Cor:sigma_min_max}: the minimal measure algebra $\mathcal M^{\bar \pi}$ satisfies the statement.
\end{proof}
%
%\begin{corollary}
%\label{Cor:abrpi_equ_mu}
%It holds
%\begin{equation*}
%\mu \ll \int_0^1 e(t)_\sharp \bar \pi dt \leq C \mu.
%\end{equation*}
%\end{corollary}
%
%\begin{proof}
%The second inequality holds because $\bar \pi$ is a test plan. Assume by contradiction that there is a set $A$, $\mu(A) > 0$, such that
%\begin{equation*}
%e(t)_\sharp \pi(S) = \pi \big( \{ \gamma(t) \in A \} \big) = 0 \quad \mathcal L^1\text{-a.e.} \ t.
%\end{equation*}
%
%
%\end{proof}

Let $\bar A_{\bar \pi}$ be a $\sigma$-compact set where $\bar \pi$ is concentrated such that $E^{\bar A_{\bar \pi}}$ generates the maximal measure algebra $\mathcal M^{\bar \pi}$, which by the previous proposition is the smallest among all maximal measure algebras $\mathcal M^\pi$.

Let $\{\Omega_n\}_n$ be a countable family of $E^{\bar A_{\bar \pi}}$-saturated sets generating $\mathcal M^{\bar \pi}$ up to negligible sets, and define the equivalence relation
\begin{equation}
\label{Equa:bar_E}
\gls{barE} = \bigcap_n \big( \Omega_n \times \Omega_n \big) \cup \big( (X \setminus \Omega_n) \times (X \setminus \Omega_n) \big).
\end{equation}
By construction
\begin{equation*}
E^{\bar A_{\bar \pi}} \subset \bar E,
\end{equation*}
and in general the inclusion is strict. The equivalence relation $\bar E$ is made of the atoms of the $\sigma$-algebra generated by the $\{\Omega_n\}_n$, whose completion coincides with $\mathcal M^{\bar \pi}$.

\begin{remark}
\label{Rem:up_to_n_E}
Consider another set $\bar A'_{\bar \pi}$ generating $\mathcal M^{\bar \pi}$, and let $\{\Omega'_n\}_n$ be $E^{\bar A'_{\bar \pi}}$-saturated sets generating $\mathcal M^{\bar \pi}$. Since $\mathcal M^{\bar \pi}$ is unique and it is the completion of the $\sigma$-algebra generated by $\bar E$, then there are $\bar E$-saturated sets $\{\Omega''_n\}_n$  such that
\begin{equation*}
\mu(\Omega''_n \Delta \Omega'_n) = 0.
\end{equation*}
Defining
\begin{equation*}
N = \bigcup_n \Omega''_n \Delta \Omega'_n, \quad \mu(N)=0,
\end{equation*}
we deduce that the equivalence relation obtained by \eqref{Equa:bar_E} using the family $\{\Omega_n'\}_n$ is equal to the one constructed through the sets $\{\Omega''_n\}_n$ up to a $\mu$-negligible set. Indeed the equivalence relation generated by $\{\Omega''_n \setminus N\}_n$ contains $\bar E$ since the sets $\Omega''_n$ are $\bar E$-saturated. Hence
\begin{equation*}
E^{\bar A'_{\bar \pi}} \setminus N \times N \subset \bar E.
\end{equation*}
Reversing the argument we obtain that there exists a $\mu$-negligible set $N'$ such that
\begin{equation*}
\bar E \setminus N' \times N' \subset E^{\bar A'_{\bar \pi}},
\end{equation*}
and then $E^{\bar A'_{\bar \pi}}$ is equal to $\bar E$ up to a $\mu$-negligible set on $X$.
\end{remark}

Let $\pi \in \adm(\mu)$ and $\bar A_\pi$ generating the maximal $\mathcal M^\pi$ as in Proposition \ref{Prop:exi_min_pi}. By minimality of $\mathcal M^{\bar \pi}$, for every $\Omega_n$ there is a set $\Omega^\pi_n \in \mathcal M^\pi$ such that
\begin{equation}
\label{Equa:Omega_n_pi}
\mu \big( \Omega_n \Delta \Omega_n^\pi \big) = 0.
\end{equation}
In particular, following the reasoning of the previous remark, there exists a $\mu$-negligible set $N \subset X$ such that if $\bar E$ is given by \eqref{Equa:bar_E} then
\begin{equation}
\label{Equa:contr_E_bar_E}
E^{\bar A_\pi} \cap (X \setminus N \times X \setminus N) \subset \bar E \cap (X \setminus N \times X \setminus N).
\end{equation}
Since $\gamma \in \bar A_\pi$ takes values in a single class of $E^{\bar A_\pi}$, we conclude that
%\begin{equation*}
%\gamma([0,1]) \in 
%\end{equation*}
%
%Being $\Omega^\pi_n$ a saturated set w.r.t. the equivalence relation $E^{\bar A_\pi}$, it follows that $\pi$-a.e. trajectory $\gamma \in \bar A_\pi$ satisfies for all $s,t$
%\begin{equation}
%\label{Equa:same_class_time}
%\gamma(s) \in \Omega_n \quad \Longleftrightarrow \quad \gamma(t) \in \Omega_n,
%\end{equation}
%otherwise \eqref{Equa:Omega_n_pi} is false.
%
%Similarly, it must hold
%\begin{equation}
%\label{Equa:Omega_n_gam}
%\mathcal L^1 \big( \big\{ t : \gamma(t) \in \Omega_n \big\} \big) \in \{0,1\}.
%\end{equation}
%Indeed, if there is a positive set $\tilde A_\pi$ of trajectories such that the above conditions does not hold, then by Fubini there are a $\pi$-positive subset $\check A_\pi$ of $\tilde A_\pi$ and two times $t_1,t_2$ such that
%\begin{equation*}
%\forall \gamma \in \check A_\pi \Big( \gamma(t_1) \in \Omega_n \ \wedge \ \gamma(t_2) \in X \setminus \Omega_n \Big).
%\end{equation*}
%But this would contradict \eqref{Equa:same_class_time}. Since \eqref{Equa:Omega_n_gam} holds for all $\Omega_n$, we deduce the following statement.

\begin{corollary}
\label{Cor:set_good}
Every $\pi \in \adm(\mu)$ is concentrated on a family of trajectories $A_\pi$ such that
\begin{equation*}
\forall \gamma \in A_\pi \ \big( \gamma(s) \bar E \gamma(t) \ \text{up to a $\mathcal L^1$-negligible subset of $[0,L(\gamma)]$} \big).
\end{equation*}
\end{corollary}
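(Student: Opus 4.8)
The plan is to derive the statement directly from the preparatory material accumulated just above it, the only genuinely new ingredient being the passage from an almost-everywhere statement about the relation $E^{\bar A_\pi}$ to a pointwise statement along individual curves.

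First I would fix $\pi \in \adm(\mu)$ and take $\bar A_\pi$ as furnished by Proposition \ref{Prop:exi_min_pi}, so that $\pi(\bar A_\pi)=1$ and $E^{\bar A_\pi}$ generates the maximal algebra $\mathcal M^\pi$. By Definition \ref{Def:equiv_rel_ch} every $\gamma \in \bar A_\pi$ has its whole image $\gamma([0,1])$ inside one of the generating sets, hence $\gamma(s)\,E^{\bar A_\pi}\,\gamma(t)$ for \emph{all} $s,t \in [0,L(\gamma)]$; the content of the corollary is that this persists for the coarser relation $\bar E$ of \eqref{Equa:bar_E}, up to times lying in an $\mathcal L^1$-null set.

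Next I would produce the inclusion \eqref{Equa:contr_E_bar_E}. Using the minimality of $\mathcal M^{\bar \pi}$ from Proposition \ref{Prop:perf_test}, each generator $\Omega_n$ of $\mathcal M^{\bar \pi}$ agrees modulo $\mu$ with a set $\Omega_n^\pi \in \mathcal M^\pi$, as recorded in \eqref{Equa:Omega_n_pi}. Running the argument of Remark \ref{Rem:up_to_n_E} then yields a single $\mu$-negligible set $N\subset X$ such that $E^{\bar A_\pi}\subset\bar E$ on $(X\setminus N)\times(X\setminus N)$.

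The main obstacle is precisely that $N$, while $\mu$-negligible, may still be visited by the curves: a priori $\gamma(s)\in N$ for a non-negligible set of times $s$, in which case \eqref{Equa:contr_E_bar_E} says nothing. This is resolved by exploiting admissibility of $\pi$. Applying the compressibility bound \eqref{Equa:curves_pi} to the indicator of $N$ (in the length parametrization $\dot{|\gamma|}=1$) gives
\[
\int \Big[ \int_0^{L(\gamma)} \ind_{\gamma(t)\in N}\, dt \Big]\,\pi(d\gamma) \le C\,\mu(N) = 0,
\]
so that for $\pi$-a.e. $\gamma$ the time set $\{t : \gamma(t)\in N\}$ is $\mathcal L^1$-negligible. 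Discarding this $\pi$-null family of curves from $\bar A_\pi$ produces the set $A_\pi$ of the statement: for any remaining $\gamma$ and any $s,t$ outside the $\mathcal L^1$-null set $\{t:\gamma(t)\in N\}$ both endpoints lie in $X\setminus N$, so the relation $\gamma(s)\,E^{\bar A_\pi}\,\gamma(t)$ upgrades through \eqref{Equa:contr_E_bar_E} to $\gamma(s)\,\bar E\,\gamma(t)$, which is the claim.
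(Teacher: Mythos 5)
Your proof is correct and follows essentially the same route as the paper: you invoke Proposition \ref{Prop:exi_min_pi} for $\bar A_\pi$, derive the inclusion \eqref{Equa:contr_E_bar_E} via \eqref{Equa:Omega_n_pi} and Remark \ref{Rem:up_to_n_E}, and then use the compressibility bound \eqref{Equa:curves_pi} applied to $\ind_N$ to discard the $\pi$-negligible family of curves spending positive time in $N$. The only difference is cosmetic: the paper places the construction of $N$ and of \eqref{Equa:contr_E_bar_E} in the discussion preceding the corollary rather than inside the proof itself.
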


\begin{proof}
Since we know that $\gamma(t) E^{\bar A_\pi} \gamma(s)$ for $s,t \in [0,L(\gamma)]$ by construction, then we deduce from \eqref{Equa:contr_E_bar_E} that
\begin{equation*}
\gamma(s) \bar E \gamma(t) \quad \forall s,t \in [0,L(\gamma)] \setminus \gamma^{-1}(N).
\end{equation*}
Being $N$ $\mu$-negligible,
\begin{equation*}
\int \bigg[ \int_0^{L(\gamma)} \ind_N(\gamma(t)) dt \bigg] \pi(d\gamma) = 0,
\end{equation*}
so that we need only to remove the negligible set of curves $\gamma$ such that $\mathcal L^1(\gamma^{-1}(N)) > 0$. 
\end{proof}

\begin{remark}
\label{Rem:N_presence}
The presence of the negligible set $N$ is natural: take for example the space $X = \mathbb S^1$, $\mu = \mathcal L^1$, and the equivalence relation
\begin{equation*}
E = \{(0,0)\} \cup (\mathbb S^1 \setminus \{0\})^2.
\end{equation*}
The measure algebra is the trivial one, but any curve passing through $0$ sees $N = \{0\}$.
\end{remark}

\section{Construction of the function $w$}
\label{S:constr_w}

In the previous section we have constructed an equivalence relation $E^{\bar A_{\bar \pi}}$ for the test plan $\bar \pi$ such that the measure algebra $\mathcal M^{\bar\pi}$ generated by the saturated sets $\mathcal M^{\bar \pi}$ satisfies the following properties:
\begin{enumerate}
\item $\bar A_{\bar \pi}$ satisfies Theorem \ref{theo:neglinters},
\item it is the maximal measure algebra among all measure algebra generated by sets where $\bar \pi$ is concentrated,
\item all others test plans $\pi$ are concentrated on a set of curves $\bar A_\pi$ generating a measure algebra which contains $\mathcal M^{\bar \pi}$.
\end{enumerate}
Nothing is said about the strong consistency of the disintegration w.r.t. $E^{\bar A_{\bar \pi}}$: if
\begin{equation}
\label{Equa:mu_bar_E_dis}
\mu = \int \mu_\alpha \zeta(d\alpha), \quad \zeta = \mu \llcorner_{\mathcal M^{\bar \pi}},
\end{equation}
then in general the conditional probability $\mu_\alpha$ is not concentrated on the corresponding equivalence class $E^{\bar A_{\bar \pi}}_\alpha$: this implies that $\bar E \supseteq E^{\bar A_{\bar \pi}}$.

\begin{assumption}
\label{Ass:disi_cons}
We assume that the minimal disintegration w.r.t. $E^{\bar A_{\bar \pi}}$ is strongly consistent for every $\bar A_{\bar \pi}$ generating $\mathcal M^{\bar \pi}$, i.e.
\begin{equation*}
\mu_\alpha(E^{\bar A_{\bar \pi}}) = 1 \quad \text{for $\zeta$-a.e. $\alpha$}.
\end{equation*}
\end{assumption}

We recall that this is equivalent to each of the following conditions (\cite{BiaCar} or \cite[Section 452]{Fre:measuretheory4}):
\begin{enumerate}
\item the equivalence relation $E^{\bar A_{\bar \pi}}$ is essentially countably generated, and in particular we can assume that $\bar E = E^{\bar A_{\bar \pi}}$ up to a $\mu$-negligible set, where $\bar E$ is given by \eqref{Equa:bar_E};
\item there exists a $\mu$-measurable map $f : X \to [0,1]$ whole level sets are the equivalence classes:
\begin{equation}
\label{Equa:f_determ_equiv}
\big\{ (x,x'):f(x) = f(x') \big\} = E^\pi;
\end{equation}
%\begin{equation*}
%S \subset X \ \text{Borel} \ \big( \sharp S \cap E^{\bar A_{\bar \pi}} = 1 \ \big),
%\end{equation*}
%or of a Borel map $f : X \to [0,1]$ such that its level sets are the equivalence classes:
%$$
%\{f(x) = f(x')\} = E^\pi
%$$
%up to a redefinition on a $\mu$-negligible set.
\end{enumerate}
Moreover, strong consistency implies that, up to a $\mu$-negligible set of equivalence classes of $E^{\bar A_{\bar \pi}}$, there exists a Borel section $S$, i.e. a Borel set in $X$ such that it contains exactly one point of each equivalence class of $E^{\bar A_{\bar \pi}}$.

We collect the previous observations in the following statement. %{\bf mettere da qualche parte che possiamo riteneral definita ovunque}

\begin{lemma}
\label{Lem:struct_cons_max}
Under Assumption \ref{Ass:disi_cons}, there exists a test plan $\bar \pi$ concentrated on a set of trajectories $\bar A_{\bar \pi}$ satisfying Theorem \ref{theo:neglinters} such that:
\begin{enumerate}
\item the equivalence relation $\bar E = \bar E^{\bar A_{\bar \pi}}$ generated by $\bar A_{\bar \pi}$ admits a Borel section \gls{barS} and has the following equivalent properties (up to a $\mu$-negligible saturated set):
\begin{enumerate}
\item the disintegration \eqref{Equa:mu_bar_E_dis} is consistent,
\item the atoms of the separable $\sigma$-algebra $\mathcal M^{\bar \pi}$ are the equivalence classes of $\bar E$;
\end{enumerate}
\item every two points of an equivalence class are connected by finitely many curves in $\bar A_{\bar \pi}$.
\end{enumerate} 
\end{lemma}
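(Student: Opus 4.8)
The plan is to assemble $\bar\pi$ and $\bar A_{\bar\pi}$ from the results of the preceding two sections and then read off the listed properties from Assumption \ref{Ass:disi_cons} together with the characterizations of strong consistency recalled immediately above. First I would take the test plan $\bar\pi \in \adm(\mu)$ furnished by Proposition \ref{Prop:perf_test}, so that $\mathcal M^{\bar\pi}$ is the smallest among all the maximal measure algebras $\mathcal M^\pi$. Next I would fix the set of curves on which $\bar\pi$ is concentrated by intersecting two full-measure sets: the one provided by Theorem \ref{theo:neglinters}, on which every subloop has integer circuitation, and the one provided by Proposition \ref{Prop:exi_min_pi}, which generates the maximal $\mathcal M^{\bar\pi}$. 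Their intersection has again full $\bar\pi$-measure, and by the removal identity \eqref{Equa:remov_1} it still generates $\mathcal M^{\bar\pi}$; moreover subloops of the smaller set are subloops of the larger one, so the integer circuitation property survives the intersection. This intersection is the set $\bar A_{\bar\pi}$ of the statement.

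Property (2) is then immediate from the very definition of the generated relation. By Definition \ref{Def:equiv_rel_ch} (equivalently Lemma \ref{Lem:min_E}) two points lie in the same class of $E^{\bar A_{\bar\pi}}$ exactly when they are joined by a path whose graph is covered by finitely many curves of $\bar A_{\bar\pi}$, which is precisely the assertion of (2); no extra argument is required.

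For property (1) I would invoke Assumption \ref{Ass:disi_cons}, which says that the disintegration \eqref{Equa:mu_bar_E_dis} along $E^{\bar A_{\bar\pi}}$ is strongly consistent, i.e. $\mu_\alpha$ is concentrated on the class $E^{\bar A_{\bar\pi}}_\alpha$ for $\zeta$-a.e. $\alpha$. By the equivalences recalled after the assumption (from \cite{BiaCar} and \cite[Section 452]{Fre:measuretheory4}), strong consistency is equivalent to $E^{\bar A_{\bar\pi}}$ being essentially countably generated, hence to the identity $\bar E = E^{\bar A_{\bar\pi}}$ up to a $\mu$-negligible saturated set, where $\bar E$ is the atom-based relation of \eqref{Equa:bar_E}. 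Since by construction the classes of $\bar E$ are exactly the atoms of $\mathcal M^{\bar\pi}$, this simultaneously yields (1a) (consistency of the disintegration) and (1b) (the atoms of $\mathcal M^{\bar\pi}$ are the classes of $\bar E$), the two equivalent formulations in the statement. Finally, strong consistency also provides, after discarding a $\mu$-negligible set of classes, a Borel section $\bar S$ meeting each class in exactly one point.

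The only point to treat with care is the identification of the abstractly defined relation $\bar E$, built from the atoms of the measure algebra in \eqref{Equa:bar_E}, with the concrete curve-generated relation $E^{\bar A_{\bar\pi}}$. These two genuinely differ in general, as the Vitali-type Example \ref{Ex:notgoodsol} shows, and it is exactly Assumption \ref{Ass:disi_cons} that rules out this discrepancy. Everything else is bookkeeping of full-measure sets and a direct appeal to the disintegration theory cited above, so no new estimate is needed and the lemma is essentially a collation of the structure produced in Sections \ref{S:constr_min_eq}.
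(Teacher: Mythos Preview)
Your proposal is correct and matches the paper's approach exactly: the paper presents this lemma with the sentence ``We collect the previous observations in the following statement'' and gives no separate proof, treating it as a summary of Propositions \ref{Prop:exi_min_pi}, \ref{Prop:perf_test}, Theorem \ref{theo:neglinters}, and the consequences of Assumption \ref{Ass:disi_cons} listed just before. One minor redundancy: the sets $A_\pi$ in Proposition \ref{Prop:exi_min_pi} are already taken among those satisfying Theorem \ref{theo:neglinters}, so your explicit intersection step is unnecessary (though harmless).
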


%Moreover by the previous section, the construction of the equivalence classes can be performed by taking a single test plan $\bar \pi$, concentrated on a set of trajectories such that every subtrajectory is of integer circuitation. This also means that $\bar E$ is equivalent to $E^{\bar \pi}$ up to negligible set.

Assumption \ref{Ass:disi_cons} is necessary to prove that the integer circuitation condition of $\Lambda$ implies the existence of the function $w$: indeed if the disintegration is not consistent, then there are counterexamples as the following one.

\begin{example}
\label{Ex:notgoodsol}
Consider the Cantor $1/3$-set
\begin{equation*}
K = [0,1] \setminus \bigcup_{n \in \N} \bigcup_{s \in \{0,2\}^{n-1}} \sum_{i=1}^{n-1} s_i 3^{-i} + 3^{-n} (1,2),
\end{equation*}
and let $V : [0,1] \to [0,1]$ be the Vitali function
\begin{equation*}
V \in C^0([0,1]), \qquad V \bigg( \sum_{i=1}^{n-1} s_i 3^{-i} + 3^{-n} (1,2) \bigg) = \sum_{i=1}^{n-1} s_i 2^{-1-i} + \{2^{-n}\}, \quad s_i \in \{0,2\}.
\end{equation*}
Define for $\alpha \in [0,1] \setminus \Q$
\begin{equation*}
f : K \to K,  \quad f(x) = V^{-1} \circ (\Id +\alpha \mod 1) \circ V(x),
\end{equation*}
which is defined up to the countably many points belonging to the boundary $Z$ of the intervals $\sum_{i=1}^{n-1} s_i 3^{-i} + 3^{-n} (1,2)$, $n \in \N,s_i \in \{0,2\}$. For definiteness we can assume that those points are removed from $K$, in which case the set $K \setminus Z$ is a $G_\delta$-set, i.e. still a Polish space.

It is fairly easy to construct a family of totally disconnected curves $\gamma : [0,1] \times K \to \R^3$ of length $1$ such that
\begin{equation*}
\gamma(0,x) = (0,0,x), \quad \gamma(1,x) = (0,0,f(x)).
\end{equation*}
The MMS $(X,d,\mu)$ is the the union of all these trajectories with the Euclidean distance in $\R^3$, and with the measure $\mathcal L^1 \times \mathcal H^{1/3}$ in the coordinates $(t,x) \in [0,1) \times K$ see Figure \ref{Fig:counterexample}.

\begin{figure}
\centering
\def\svgwidth{\textwidth}
\input{./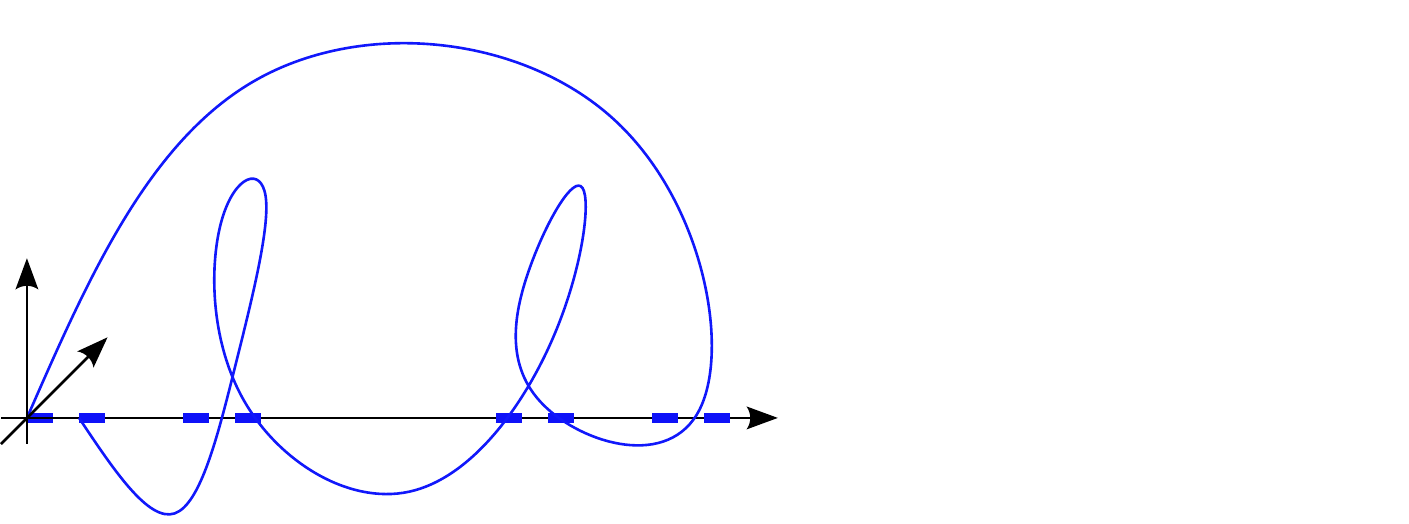_tex}
\caption{The Cantor set $K$ together with a piece of the orbit $o$, and the evolution of the point $(y,\theta)$ in the angular coordinates.}
\label{Fig:counterexample}
\end{figure}

Let $o(\cdot,x) : \R \to \R^3$ be the orbit starting from $x \in K$ and defined by
\begin{equation*}
o(t,x) = \gamma(t \mod 1,f^{\lfloor t \rfloor}(x)), \quad \lfloor t \rfloor \ \text{integer part of $t$}.
\end{equation*}
The fact that the segments $\gamma$ are totally disconnected implies that the only admissible plans $\pi$ are concentrated on curves which are subsets of $o(\cdot,x)$, and in particular any measurable function $v : [0,1] \times K \to \R$ gives an integer (or free) circuitation cotangent vector field, being the tangent space one dimensional in each point. %by defining on the
%\begin{equation*}
%\Lambda_{\gamma}(t) = v(\gamma(t)) . %, \quad \gamma(s) = o((1-s)t_1 + st_2,x).
%\end{equation*}

Setting $v = 2\pi \beta$ constant with $z \cdot (\alpha,\beta) \notin \Z$ for all $z \in \Z^2 \setminus 0$, we obtain that any candidate $w$ such that $\dot w = i w v$ should satisfy
\begin{equation*}
w(0,0,f(x)) = w(0,0,x) e^{2\pi i \beta},
\end{equation*}
which, calling
$$
w'(y) = w(0,0,V^{-1}(y)),
$$
by definition of $f$ gives
\begin{equation*}
w'(y + \alpha \mod 1) = e^{i \theta(y + \alpha \mod 1)} = w'(y) e^{2\pi i \beta} = e^{i \theta(y) + 2\pi i \beta},
\end{equation*}
where the function $\theta : [0,1] \to \mathbb S^1$ is the angle.

If we consider the torus $\mathbb T^2$, then the values of the angles $\theta(y)$ on a single orbit corresponds to the trajectory of the points
\begin{equation*}
n \mapsto \bigg( y + n \alpha \mod 1, \frac{\theta(y)}{2\pi} + n \beta \mod 1 \bigg).
\end{equation*}
We are looking for the invariant graph $\theta = \theta(y)$. Consider the measure $\varpi = \theta_\sharp \mathcal L^1$ and its Fourier transform
\begin{equation*}
\hat \varpi = \sum_{z \in \Z^2} c_z e^{2\pi i z \cdot (y,\theta)}.
\end{equation*}
The invariance implies
\begin{equation*}
\sum_{z \in \Z^2} c_z e^{2\pi i z \cdot (y,\theta)} = \sum_{z \in \Z^2} c_z e^{2\pi i z \cdot (y,\theta) + 2\pi i z \cdot (\alpha,\beta)},
\end{equation*}
i.e.
\begin{equation*}
\forall c_z \not= 0 \ \big( z \cdot (\alpha,\beta) \in \Z^2 \big).
\end{equation*}
This is impossible for $z \not= 0$ by the assumptions on $\alpha,\beta$, hence there are no integrals $w$ to $Dw = i w v$.

We can slightly change the space $X$ and the test plan $\bar \pi$ in order to see that the assumption that the disintegration is strongly consistent for all $\bar A_{\bar \pi}$ is essential. Set
\begin{equation*}
\tilde X = X \cup [0,1] \times \{0\},
\end{equation*}
i.e. we add the segment containing $K$. This segment is negligible for all test plans, since we still keep the measure supported on $X$. To have a set of curves with integer circuitation, just remove a point to every orbit $o(t,x)$, for example assume that
\begin{equation*}
\bar A_{\bar \pi} = \big\{ (t,0,0), t \in [0,1] \big\} \cup \bigcup_{x \in K,n \in \N} \big\{ o(t,x), t \in [0,1/2-2^{-n}] \big\} \cup \big\{ o(-t,x), t \in [0,1/2-2^{-n}] \big\}.
\end{equation*}
It is fairly easy to see that $\bar A_{\bar \pi}$ satisfies Theorem \ref{theo:neglinters}, and the equivalence class is $\tilde X \setminus \cup_x \{o(1/2,x)\}$. However removing the negligible curve $(t,0,0)$ the measure algebra $\mathcal M$ is not the minimal one $\mathcal M^{\bar \pi}$.

The fact that the last possibility is ruled out by Assumption \ref{Ass:disi_cons} is essential in the proof of Theorem \ref{Theo:w_final}.
%
%unless their ratio is rational.
%
%
%Taking a Lebesgue point of $w$, say
%\begin{equation*}
%\fint_{(-\epsilon,\epsilon)} |w(y) - 1| dy \leq \delta,
%\end{equation*}
%it follows that the the point $n \alpha \mod 1$ is a Lebesgue point of the value $e^{2\pi i n \beta}$. Being $\alpha$ irrational, 
%
%
%In particular, for all trajectories $o(t,x)$ there is only one value $(t,x) \in [0,1) \times K$ such that
%\begin{equation*}
%o(t,x) = 1,
%\end{equation*}
%and its projection on $K$ would generate a Borel set which has a single point in each class, i.e. it is a Borel section. Being the rotation number irrational, it is well known that this is not possible.
%
%The same example can be adapted to circuitation free fields.
\end{example}

%
%Hence we are in the following situation:
%\begin{enumerate}
%\item the minimal relation is generated by a transference plan $\bar \pi$;
%\item $\bar \pi$ is concentrated on a set of curves $\bar A_{\bar \pi}$ such that every subloop has only  with integer circuitation, and thery are subsets of the equivalence classes;
%\item the minimal disintegration is consistent, i.e. every conditional probability is concentrated on the equivalence class of a point.
%\end{enumerate}

We now prove the existence of a function $w$ using the curves of $\bar A_{\bar \pi}$.

\begin{proposition}
\label{Prop:exist_w}
There exists a Borel function $\check w$ such that for all sub-curves $\gamma$ of $\bar A_{\bar \pi}$ it holds
\begin{equation*}
\check w(\gamma(1)) = \check w(\gamma(0)) e^{i \int_0^1 \Lambda_\gamma(t) \dot{|\gamma|}(t) dt}.
\end{equation*}
\end{proposition}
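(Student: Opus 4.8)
The plan is to \emph{integrate $\Lambda$ along the curves of $\bar A_{\bar\pi}$}, exactly as in Lemma~\ref{Lem:simple_zero} but now globally across a whole equivalence class rather than along a single curve; the integer circuitation of \emph{all} subloops of $\bar A_{\bar\pi}$ provided by Theorem~\ref{theo:neglinters} is what makes the construction consistent. First I would fix, using Assumption~\ref{Ass:disi_cons} and Lemma~\ref{Lem:struct_cons_max}, a Borel section $\bar S$ of $\bar E$ together with the Borel quotient map $\mathtt e:X\to\bar S$ sending $x$ to the representative $\mathtt e(x)$ of its class, and I would prescribe the initial datum $\check w\equiv 1$ on $\bar S$ (any Borel $\omega:\bar S\to\mathbb S^1$ works and will be exploited later in Theorem~\ref{Theo:w_final}). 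For $x\in X$, Lemma~\ref{Lem:struct_cons_max}(2) provides a subcurve $\gamma$ of $\bar A_{\bar\pi}$ with $\gamma(0)=\mathtt e(x)$ and $\gamma(1)=x$; since by Corollary~\ref{Cor:Lambda_pi_curve} the function $\Lambda_\gamma$ is an $\mathcal H^1$-function of the image $\gamma([0,1])$, the line integral $\int_0^1\Lambda_\gamma(t)\dot{|\gamma|}(t)\,dt$ is meaningful, and I set
\begin{equation*}
\check w(x)=\exp\Big(i\int_0^1\Lambda_\gamma(t)\dot{|\gamma|}(t)\,dt\Big).
\end{equation*}

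The heart of the argument is \textbf{well-definedness}: the value $\check w(x)$ must not depend on the connecting subcurve. If $\gamma,\gamma'$ are two subcurves of $\bar A_{\bar\pi}$ from $\mathtt e(x)$ to $x$, then the concatenation of $\gamma$ with the time reversal $R(\gamma')$ has graph contained in $\Graph\gamma\cup\Graph\gamma'$, hence is a closed subcurve of $\bar A_{\bar\pi}$ built from finitely many of its curves. By Theorem~\ref{theo:neglinters} we have $\bar A_{\bar\pi}(n)\subset\mathcal I_c$ for every $n$, so this subloop has integer circuitation and
\begin{equation*}
\int_0^1\Lambda_\gamma\,\dot{|\gamma|}\,dt-\int_0^1\Lambda_{\gamma'}\,\dot{|\gamma'|}\,dt\in 2\pi\Z,
\end{equation*}
whence the two candidate values of $\check w(x)$ coincide. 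This is the step I expect to carry the whole proposition, and it is precisely where the pointwise (not merely $\pi$-a.e.) conclusion of Theorem~\ref{theo:neglinters} on the fixed set $\bar A_{\bar\pi}$ is indispensable. The functional identity then follows by a \emph{cocycle} argument: given an arbitrary subcurve $\gamma$ with $\gamma(0)=x_0$ and $\gamma(1)=x_1$ (necessarily in the same class), I choose a subcurve $\sigma$ from $\mathtt e(x_0)=\mathtt e(x_1)$ to $x_0$, so that the concatenation $\sigma\ast\gamma$ connects $\mathtt e(x_1)$ to $x_1$ and, by the definition and by well-definedness,
\begin{equation*}
\check w(\gamma(1))=\exp\Big(i\int_0^1\Lambda_\sigma\,\dot{|\sigma|}\,dt+i\int_0^1\Lambda_\gamma\,\dot{|\gamma|}\,dt\Big)=\check w(\gamma(0))\,e^{i\int_0^1\Lambda_\gamma(t)\dot{|\gamma|}(t)\,dt},
\end{equation*}
which is the claimed relation and in particular shows $\check w$ takes values in $\mathbb S^1$.

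It remains to check that $\check w$ is \textbf{Borel}, and this is the main technical obstacle. The set $R=\{(x,\gamma):\gamma\in\Gamma_{\bar A_{\bar\pi}},\ \gamma(0)=\mathtt e(x),\ \gamma(1)=x\}$ is analytic, being assembled from the $\sigma$-compact set $\bar A_{\bar\pi}$ through the finite concatenation maps of Theorem~\ref{theo:neglinters} and a projection, and its fibers are nonempty by Lemma~\ref{Lem:struct_cons_max}(2); by the Jankov--von Neumann uniformization theorem \cite{Sri:courseborel} it admits a $\sigma(\Sigma^1_1)$-measurable selection $x\mapsto\gamma_x$. Since $\gamma\mapsto\int_0^1\Lambda_\gamma\,\dot{|\gamma|}\,dt$ is measurable on $\Gamma_{\bar A_{\bar\pi}}$ — this follows from the approximation \eqref{Equa:simple_plan_ct}--\eqref{Equa:plan_ct} defining $\Lambda$ together with Corollary~\ref{Cor:Lambda_pi_curve} — the composition $x\mapsto\check w(x)$ is $\mu$-measurable, and because its value is independent of the chosen selection this measurable representative is exactly the function constructed above. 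Restricting $\bar A_{\bar\pi}$ to the Borel saturated set of full $\mu$-measure on which $\bar S$ is a genuine Borel section (Assumption~\ref{Ass:disi_cons}), so that the endpoints of every subcurve remain in the good set, and defining $\check w$ arbitrarily and Borel-measurably on the negligible complement, I obtain the desired Borel function for which the stated identity holds along all sub-curves of $\bar A_{\bar\pi}$.
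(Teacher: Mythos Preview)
Your overall strategy coincides with the paper's: fix a Borel section $\bar S$, integrate $\Lambda$ along a subcurve connecting the section to $x$, use the integer circuitation of all subloops of $\bar A_{\bar\pi}$ (Theorem~\ref{theo:neglinters}) for well-definedness, and then run the cocycle argument to get the identity for arbitrary subcurves. These parts are correct and essentially identical to the paper's proof.

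The gap is in your Borel measurability argument. Jankov--von Neumann only yields a $\sigma(\Sigma^1_1)$-measurable (hence universally measurable, hence $\mu$-measurable) selection $x\mapsto\gamma_x$, and the subsequent fix you propose---redefining $\check w$ on a $\mu$-null set---does \emph{not} do what you need. Modifying a universally measurable function on a $\mu$-null set does produce a Borel representative, but the null set on which you must modify has nothing to do with the saturated null set where $\bar S$ fails to be a section; it is determined by $\check w$ itself. Once you alter $\check w$ on those points, any subcurve of $\bar A_{\bar\pi}$ whose endpoint lands in the modified set may violate the stated identity, so you lose precisely the ``for all subcurves'' conclusion the proposition demands.

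The paper avoids the selection theorem entirely by working on the curve side: it defines $\check w$ as a Borel function on $[0,1]\times\Gamma_{\bar A_{\bar\pi},\bar S}$ via the Borel map $s(\gamma)=\max\{t:\gamma(t)\in\bar S\}$, namely
\[
\check w(t,\gamma)=\exp\Big(i\int_{s(\gamma)}^t\Lambda_\gamma\,\dot{|\gamma|}\,d\tau\Big),
\]
and then observes that well-definedness forces this to be the graph of a function on $X$. The passage to a \emph{Borel} function on $X$ is the bianalytic argument: for Borel $B\subset\mathbb S^1$, the set $\check w^{-1}(B)=e\bigl(\{(t,\gamma):\check w(t,\gamma)\in B\}\bigr)$ is analytic, and by well-definedness so is its complement $\check w^{-1}(B^c)$, whence $\check w^{-1}(B)$ is Borel by Lusin separation. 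You can run exactly the same argument in your setup---your own well-definedness step gives that $\check w^{-1}(B)$ and $\check w^{-1}(B^c)$ are both projections of analytic subsets of $R$---and this replaces the faulty null-set modification without touching the rest of your proof.
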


\begin{proof}
Let $\bar S$ be a Borel section of $\bar E$, and let \gls{GammaApiS} be the family of subcurves of $\bar A_{\bar \pi}$ passing through $\bar S$. We will assume that $\bar S$ has a point in each equivalence class, by removing a $\mu$-negligible set of equivalence classes. %: this means that $\sharp Ex \cap S = 1$ for all $\mu$-e.e $x \in X$. The fact that $S$ is Borel, and then that $x \mapsto S \cap Ex$ is a Borel function, is a consequence of the assumption that the disintegration on $E$ is consistent {\bf (CITARE)}.

For every curve $\gamma \in \Gamma_{\bar A_{\bar \pi},\bar S}$ let
\begin{equation*}
s(\gamma) = \max \big\{ t : \gamma(t) \in \bar S \big\}.
\end{equation*}
Notice that by the assumption that $\bar A_{\bar \pi}$ generates $\bar E$, every point $x \in X$ is connected to the unique point $\bar S \cap Ex$ by a subcurve $\gamma \in \Gamma_{\bar A_{\bar \pi},\bar S}$. Being $\{ (t,\gamma), \gamma(t) \in \bar S\}$ a Borel set, it follows that $s$ is Borel.

%the function $\gamma \mapsto s(\gamma)$ u.s.c. in each equivalence class and $\bar S$ Borel, it is Borel.

Define the function %{\bf make the notation for the subcurves}
\begin{equation}
\label{Equa:w_def}
\check w : [0,1] \times \Gamma_{\bar A_{\bar \pi},\bar S} \to L^\infty((0,1)), \quad \check w(t,\gamma) = e^{i \int_{s(\gamma)}^t \Lambda(\gamma)(\tau) \dot{|\gamma|}(\tau) d\tau}.
\end{equation}
It is easy to see that this function is Borel, being $s(\gamma)$ and $\Lambda$ Borel. Moreover, from the integer circuitation assumption on $\bar A_{\bar \pi}$, if $\gamma(t) = \gamma'(t')$, $\gamma,\gamma' \in \Gamma_{\bar A_{\bar \pi},\bar S}$, then
\begin{equation*}
\frac{\check w(\gamma'(t'))}{\check w(\gamma(t))} = e^{i \big( \int_{s(\gamma')}^{t'} \Lambda(\gamma')(\tau) \dot{|\gamma|}(\tau) d\tau + \int_t^{s(\gamma)} \Lambda(\gamma)(\tau) \dot{|\gamma|}(\tau) d\tau \big)} = 1,
\end{equation*}
because the curve
\begin{equation*}
\gamma(\tau) = \begin{cases}
R_{s(\gamma'),t'}(\gamma')(2 \tau) & \tau \in [0,1/2], \\
R_{t,s(\gamma)}(\gamma)(2(1-\tau)) & \tau \in (1/2,1],
\end{cases}
\end{equation*}
is a sub-loop of $\bar A_{\bar \pi}$. This shows that $\check w(t,\gamma)$ is the graph of a Borel function $\check w$ defined on $X$.
%
%
%
%
%Considering the function defined on curves
%\begin{equation*}
%W(\gamma,t) = w(\gamma(t)),
%\end{equation*}
%one sees that $W$ is Borel because the integral is Borel too, being $\Lambda$ and $\dot{|\gamma|}$ Borel, and it is defined on a Borel set. Hence the function $w$ is defined on the projection of the above set on $X$ (which is analytic), and as a multivalued function it is Borel, but being single valued it is a Borel function. Hence we conclude that $w$ is analytic.
%
%Assume that $w$ is an analytic function defined an analytic set $S'$, and for every $\gamma \in \bar A_{\bar \pi}$ let $s : \bar A_{\bar \pi} \to [0,1]$ be an analytic selection in the set
%\begin{equation*}
%\big\{ (\gamma,t) : \gamma(t) \in S' \big\},
%\end{equation*}
%and define on the space of curves
%\begin{equation*}
%W(\gamma,t) = e^{i \int_s^t \Lambda(\gamma)(\tau) \dot{|\gamma|}(\tau) d\tau},
%\end{equation*}
%which is again an analytic function. Hence its projection on the space $X$ is analytici, and the same computation as in the first case shows that it is single valued, i.e. analytic.
%
%Since the disintegration is consistent, for every point $y \in E^{\bar s}x$, $x = S \cap E^{\bar s}x$, there is a finite family of curves connecting $x$ to $y$, we thus conclude that after countably many steps we have defined $w$ as an analytic function on the whole $X$.

Finally, let $\gamma \in \Gamma_{\bar A_{\bar \pi}}$ be a subcurve of $\bar A_{\bar \pi}$. Let $\gamma_0,\gamma_1 \in \Gamma_{\bar A_{\bar \pi},\bar S}$ be subcurves connecting $\gamma(0),\gamma(1)$ to a point $\bar S$, respectively: such curves exists because $\gamma$ takes values in a single equivalence class, and each point in an equivalence class is connected to $\bar S$ by a subcurve. The subloop made by these 3 curves has integer circuitation, and then
\begin{equation}
\label{Equa:triang_circu}
\begin{split}
\check w(\gamma(1)) &= \check w(\gamma(0)) e^{- i \int_1^{-s(\gamma_0)} \Lambda(\gamma_0) |\dot \gamma_0| \mathcal L^1 + \int_{s(\gamma_1)}^1 \Lambda(\gamma_1) |\dot \gamma_1| \mathcal L^1} \\
&= \check w(\gamma(0)) e^{i \int_0^1 \Lambda(\gamma) \dot{|\gamma|} \mathcal L^1} e^{- i \int_0^1 \Lambda(\gamma) \dot{|\gamma|} \mathcal L^1 - i \int_1^{-s(\gamma_0)} \Lambda(\gamma_0) |\dot \gamma_0| \mathcal L^1 + \int_{s(\gamma_1)}^1 \Lambda(\gamma_1) |\dot \gamma_1| \mathcal L^1} \\
&= \check w(\gamma(0)) e^{i \int_0^1 \Lambda(\gamma) \dot{|\gamma|} \mathcal L^1}.
\end{split}
\end{equation}
where we have used that the curve obtained by joining $\gamma,\gamma_0,\gamma_1$ is a subloop of $\bar A_{\bar \pi}$.
\end{proof}

%Denote with $\check w$ the solution given by formula \eqref{Equa:w_def}.
%
%Being $\mu(\bar S) = 0$ in general, this would generate some problems when considering $L^\infty$ functions: a better object is the projection of a function $w$ on the $\sigma$-algebra of saturated sets,
%\begin{equation*}
%L^\infty(X,\mathbb S^1) \ni w \mapsto \tilde W(y) = \int w(x) \mu_y(dx), \quad \mu = \int \mu_y m(dy).
%\end{equation*}
%The solution formula \eqref{Equa:w_def} can be extended by taking a Borel function $\bar w : \bar S \to \mathbb S^1$ obtaining the map
%\begin{equation*}
%%\label{Equa:w_def}
%\bar w \mapsto w(x;\bar w) = \bar w(y) e^{i \int_{0}^1 \Lambda(\gamma) \dot{|\gamma|} \mathcal L^1},
%\end{equation*}
%where $y \in \bar S \cap Ex$ and $\gamma \in \Gamma_{\bar A_{\bar \pi},\bar S}$ is a subcurve connecting $y$ to $x$. 
%
%
%

The next proposition shows the importance that $\mathcal M^{\bar \pi}$ is the maximal measure algebra generated by $\bar \pi$.

\begin{proposition}
\label{Prop:zero_Lambda}
If $w$ satisfies
\begin{equation}
\label{Equa:equa_cic_1}
\frac{d}{dt} w(\gamma(t)) = 0, \quad \pi\text{-a.e.} \ \gamma,
\end{equation}
for every test plan $\pi$, then up to a $\mu$-negligible set $w$ is constant in each equivalence class.
\end{proposition}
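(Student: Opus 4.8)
The plan is to apply the hypothesis \eqref{Equa:equa_cic_1} only to the distinguished plan $\bar\pi$, read off from it that the level sets of $w$ are saturated for the equivalence relation generated by those curves on which $w$ is constant, and then use the maximality of $\mathcal M^{\bar\pi}$ together with the strong consistency of the disintegration (Assumption \ref{Ass:disi_cons}) to upgrade this into genuine constancy on the classes of $\bar E=E^{\bar A_{\bar\pi}}$.

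First I would take $\pi=\bar\pi$ in \eqref{Equa:equa_cic_1}. Since $\bar\pi$ is concentrated on $\bar A_{\bar\pi}$ and $w\circ\gamma$ has vanishing derivative (hence is constant, $w\circ\gamma$ being absolutely continuous) for $\bar\pi$-a.e.\ $\gamma$, there is a set $\bar A'_{\bar\pi}\subset\bar A_{\bar\pi}$ with $\bar\pi(\bar A'_{\bar\pi})=1$ such that $w$ is constant on each range $\gamma([0,1])$, $\gamma\in\bar A'_{\bar\pi}$. Up to modifying $w$ on a $\mu$-negligible set and using inner regularity, $\bar A'_{\bar\pi}$ may be assumed Borel.

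Next I claim that every level set $w^{-1}(B)$, $B$ Borel, is $E^{\bar A'_{\bar\pi}}$-saturated. Indeed, by Lemma \ref{Lem:min_E} the relation $x\,E^{\bar A'_{\bar\pi}}\,x'$ supplies finitely many curves $\gamma_1,\dots,\gamma_n\in\bar A'_{\bar\pi}$ and points $x=x_1,\dots,x_{n+1}=x'$ with $x_i,x_{i+1}\in\gamma_i([0,1])$; as $w$ is constant on each $\gamma_i([0,1])$ this forces $w(x)=w(x')$. Thus $w$ is literally constant on each $E^{\bar A'_{\bar\pi}}$-class, so $w^{-1}(B)\in\mathcal M^{E^{\bar A'_{\bar\pi}}}$. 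I would then invoke \eqref{Equa:remov_1}: since $\bar A'_{\bar\pi}\subset\bar A_{\bar\pi}$ has full $\bar\pi$-measure and $\mathcal M^{\bar\pi}=\mathcal M^{E^{\bar A_{\bar\pi}}}$ is maximal, the two measure algebras coincide, $\mathcal M^{E^{\bar A'_{\bar\pi}}}=\mathcal M^{\bar\pi}$. Hence each $w^{-1}(B)$ agrees up to a $\mu$-null set with an element $\Omega_B\in\mathcal M^{\bar\pi}$; choosing a countable generating family $\{B_k\}$ of Borel sets and discarding $N=\bigcup_k\big(w^{-1}(B_k)\,\Delta\,\Omega_{B_k}\big)$ shows that $w$ coincides $\mu$-a.e.\ with an $\mathcal M^{\bar\pi}$-measurable function.

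Finally, under Assumption \ref{Ass:disi_cons} the atoms of $\mathcal M^{\bar\pi}$ are exactly the classes of $\bar E$ (Lemma \ref{Lem:struct_cons_max}), so an $\mathcal M^{\bar\pi}$-measurable function is constant on each class; combining this with the disintegration \eqref{Equa:mu_bar_E_dis} and the fact that $\mu_\alpha(N)=0$ for $\zeta$-a.e.\ $\alpha$ yields that $w$ is $\mu_\alpha$-a.e.\ constant on $\zeta$-a.e.\ class, which is the assertion. The step that requires genuine care, and where I expect the real content to lie, is that the naive argument \virgolette{$w$ is constant along curves and the classes are generated by curves, so $w$ is constant on classes} fails at the level of $\mu$-a.e.\ statements: deleting a $\bar\pi$-null set of curves truly shrinks the generated relation, so in general $E^{\bar A'_{\bar\pi}}\subsetneq\bar E$ (cf.\ Example \ref{Ex:notgoodsol}). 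The whole point is that maximality of $\mathcal M^{\bar\pi}$ forbids this shrinking from changing the measure algebra, and strong consistency then converts measure-algebra membership back into constancy on the actual classes. The residual measurability issues (Borel-ness of $\bar A'_{\bar\pi}$ and $\mu$-measurability of $w^{-1}(B)$) are routine.
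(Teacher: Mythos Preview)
Your overall strategy (pass to $\bar\pi$, show $w$ is constant on the classes of the equivalence relation generated by a full-measure set of curves, then invoke maximality of $\mathcal M^{\bar\pi}$) matches the paper's, but there is a genuine gap and one unnecessary hypothesis.

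\medskip

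\textbf{The gap.} You write that ``$w\circ\gamma$ being absolutely continuous'' with zero derivative implies that $w$ is literally constant on each range $\gamma([0,1])$, and then chain through intersection points. This is where the argument slips. The function $w$ is only defined $\mu$-a.e.; for any fixed Borel representative, the hypothesis \eqref{Equa:equa_cic_1} gives only that $t\mapsto w(\gamma(t))$ coincides $\mathcal L^1$-a.e.\ with a constant $h(\gamma)$, not that it equals $h(\gamma)$ for every $t$. So when two curves $\gamma,\gamma'\in\bar A'_{\bar\pi}$ meet at a single point $x_0$, you cannot conclude $h(\gamma)=h(\gamma')$ from the value $w(x_0)$, because $x_0$ may lie in the $\mathcal L^1$-exceptional set for one or both curves. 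Your chaining step, and hence the saturation of $w^{-1}(B)$ for $E^{\bar A'_{\bar\pi}}$, is therefore unjustified.

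The paper closes exactly this gap by a two-curve argument in the style of Step~1 of Theorem~\ref{theo:neglinters}: one forms the set $W=\{(\gamma,\gamma'):\gamma([0,1])\cap\gamma'([0,1])\neq\emptyset,\ h(\gamma)\neq h(\gamma')\}$, observes that any $\xi\in\Pi^{\leq}(\bar\pi,\bar\pi)$ concentrated on $W$ would produce, via the merged curve \eqref{Equa:mergiloop}, a test plan violating \eqref{Equa:equa_cic_1}, and then applies Kellerer duality to obtain a $\bar\pi$-negligible $N\subset\Gamma$ outside which intersecting curves carry the same constant. Note that this step uses the hypothesis for a \emph{new} test plan, not just for $\bar\pi$; applying \eqref{Equa:equa_cic_1} only at $\bar\pi$, as you do, is not enough.

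\medskip

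\textbf{The unnecessary hypothesis.} Your final step invokes Assumption~\ref{Ass:disi_cons} to pass from $\mathcal M^{\bar\pi}$-measurability of $w$ to constancy on $\bar E$-classes. The paper explicitly remarks that strong consistency is \emph{not} needed here: once $w$ is constant on $E^{\bar A_{\bar\pi}\setminus N}$-classes, if it failed to be constant on $\bar E$-classes up to a $\mu$-null set one would refine the quotient map $f$ of \eqref{Equa:f_determ_equiv} to $f'(x)=(f(x),w(x))$, producing an equivalence relation $\bar E'\subsetneq\bar E$ with $\mathcal M^{\bar E'}\supsetneq\mathcal M^{\bar\pi}$, contradicting maximality. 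The $\mu$-measurability of $w$ is what makes this work; no structure on the disintegration is required.
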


Observe that in this prof we do not need the assumption on the strong consistency of the disintegration, Assumption \ref{Ass:disi_cons}: this is due to the fact that the function $w$ is $\mu$-measurable.

\begin{proof}
Consider the test plan $\bar \pi$ generating the minimal equivalence relation $\bar E$. %, where $\bar \pi$ is the test plan of Proposition \ref{Prop:perf_test} and $\hat \pi$ is the constant plan of Remark \ref{Rem:transfe}.

Assumption \eqref{Equa:equa_cic_1} implies that $t \mapsto w(\gamma(t))$ is equal to a constant $h(\gamma)$ for $\mathcal L^1$-a.e. $t \in [0,1]$ for $\bar \pi$-a.e. $\gamma$. Define the set
\begin{equation*}
W = \Big\{ (\gamma,\gamma') : \gamma([0,1]) \cap \gamma'([0,1]) \not= \emptyset, h(\gamma) \not= h(\gamma') \Big\} \subset \Gamma \times \Gamma.
\end{equation*}
The same analysis of Step 1 of Theorem \ref{theo:neglinters} implies that there is a $\bar \pi$-negligible set $N \subset \Gamma$ such that
\begin{equation*}
\forall \gamma,\gamma' \in \Gamma \setminus N \Big( \gamma([0,1]) \cap \gamma'([0,1]) \ \Rightarrow \ h(\gamma) = h(\gamma') \Big).
\end{equation*}
Indeed, for every measure $0 < \xi \in \Pi^\leq(\bar \pi,\bar \pi)$ concentrated on $W$ the test plan obtained by \eqref{Equa:mergiloop} would not satisfy the assumption of the proposition. %We can change the valued of $w$ on this set of curves in order

Hence, $t \mapsto w(\gamma(t))$ is constant in each subcurve $\gamma$ of $\Gamma \setminus N$ for $\mathcal L^1$-a.e. $t$. In particular, it is constant in each equivalence class of the equivalence relation $E^{\bar A_{\bar \pi} \setminus N}$ generated by $\bar A_{\bar \pi} \setminus N$.

This equivalence relation $E^{\bar A_{\bar \pi} \setminus N}$ is smaller than $\bar E$, but by the maximality of $\mathcal M^{\bar E}$ it generates the same measure algebra $\mathcal M^{\bar E} = \mathcal M^{\bar \pi}$. If $w$ is not constant in each equivalence class of $\bar E$ up to a $\mu$-negligible set $N' \subset X$, then it follows that taking $f$ as in \eqref{Equa:f_determ_equiv} for the equivalence relation $\bar E$ and defining
\begin{equation*}
f'(x) = \big( f(x),w(x)),
\end{equation*}
we obtain a Borel map such that its equivalence relation $\bar E' = (f',f')^{-1}(\Id)$ is smaller than $\bar E$ and the measure algebra $\mathcal M^{\bar E'}$ strictly contains $\mathcal M^{\bar \pi}$, which contradicts the maximality of the latter.
%
%
%differs from $\bar E$ only on a $\mu$-negligible set $N'$. In particular we conclude that %since $E^{\bar A_{\bar \pi} \setminus N} = \bar E$ up to a $\mu$-negligble set of equivalence classes by the minimality of $\bar E$, is the equivalence class generated by the curves in $\Gamma \setminus N$ by the minimality of $\bar E$ and its strong consistency, then 
%$\bar w(x) = h(\gamma)$ if $x \in \bar E^{\bar A_{\bar \pi \setminus N}} \gamma(0)$ is a function on $X$ which is constant on the equivalence classes of $\bar E \setminus N'$.
%%
%%If we define the map
%%\begin{equation*}
%%\gamma \mapsto \check \gamma(t) = \begin{cases}
%%\gamma(0) & t \in [0,1/3), \\
%%\gamma(3t - 1) & t \in [1/3,2/3], \\
%%\gamma(1) & t \in (2/3,1],
%%\end{cases}
%%\end{equation*}
%%we discover that every two points in a equivalence class satisfy $w(\gamma(0)) = w(\gamma(1)) = \bar w$, which gives the statement.
\end{proof}

The last result gives the general formula for every solution to the differential equation $Dw = iwv$ or more precisely
\begin{equation*}
\frac{d}{dt} w \circ \gamma = i w(\gamma) \Lambda_\gamma(t) \dot{|\gamma|}(t),
\end{equation*}
rewritten in \eqref{Equa:equa_cic} below.

\begin{theorem}
\label{Theo:w_final}
Given a function $\gls{omega} \in L^\infty(\zeta,\mathbb S^1)$ ($\zeta$ given by \eqref{Equa:mu_bar_E_dis}), there exists a function $w \in L^\infty(\mu,\mathbb S^1)$ solving the equation for every test plan $\pi$
\begin{equation}
\label{Equa:equa_cic}
w(\gamma(t)) = w(\gamma(s)) e^{i \int_s^t \Lambda_\gamma \dot{|\gamma|} \mathcal L^1}, \quad \pi\text{-a.e.} \ \gamma, \ s,t \in [0,1] \setminus N_\gamma, \mathcal L^1(N_\gamma) = 0, %\text{-a.e.} \ s,t \in [0,1],
\end{equation}
and such that
$$
w(x) = \omega(y) \check w(x) \quad \text{for $\zeta$-a.e. $y \in \bar S \cap \bar Ex$},
$$
where $\check w$ is the function constructed in Proposition \ref{Prop:exist_w}.

Viceversa, if $w$ is a solution to the equation \eqref{Equa:equa_cic} above, then the ratio $w/\check w$ is constant on each equivalence class $\bar E_y$ for $\zeta$-a.e. $y$ and defines a function $\omega \in L^\infty(\zeta,\mathbb S^1)$ on the quotient space $X/\bar E$.
\end{theorem}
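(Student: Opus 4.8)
The plan is to treat the two directions separately, using the function $\check w$ of Proposition~\ref{Prop:exist_w} as the pivot. For the existence part, given $\omega \in L^\infty(\zeta,\mathbb S^1)$ I would set
\begin{equation*}
w(x) = \omega(y)\,\check w(x), \qquad y = \bar S \cap \bar E x,
\end{equation*}
where $\bar S$ is the Borel section furnished by Assumption~\ref{Ass:disi_cons}. Measurability of $w$, and $w \in L^\infty(\mu,\mathbb S^1)$, follow from the Borel regularity of $\check w$ and of the section map $x \mapsto \bar S \cap \bar E x$, together with $|\omega| = |\check w| = 1$. The substance is to verify that $w$ solves \eqref{Equa:equa_cic} for \emph{every} test plan $\pi$, and the real point is that this already must be checked for $\check w$ itself (the case $\omega \equiv 1$), since $\check w$ was constructed only from subcurves of $\bar A_{\bar\pi}$.

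The crux is therefore the claim that $\check w$ satisfies \eqref{Equa:equa_cic} along $\pi$-a.e.\ curve of an arbitrary $\pi \in \adm(\mu)$. By Corollary~\ref{Cor:set_good} I may assume $\pi$ is concentrated on curves $\gamma$ whose image lies, up to an $\mathcal L^1$-negligible set of times, in a single class of $\bar E$; in particular $y = \bar S \cap \bar E \gamma(t)$ is independent of $t$ for a.e.\ $t$, so $\omega(y)$ is constant along $\gamma$ and it suffices to treat $\check w$. Next I would apply Corollary~\ref{Cor:count_m} to the two-element family $\{\bar\pi,\pi\}$, obtaining full-measure sets $\bar A_{\bar\pi}, A_\pi$ such that every closed curve pieced together from finitely many curves of $\bar A_{\bar\pi} \cup A_\pi$ has integer circuitation. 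For $\pi$-a.e.\ $\gamma$ and a.e.\ $s<t$, I connect $\gamma(s)$ and $\gamma(t)$ to $y$ by subcurves $\gamma_0,\gamma_1 \in \Gamma_{\bar A_{\bar\pi},\bar S}$ — these exist precisely because strong consistency (Assumption~\ref{Ass:disi_cons}) guarantees each point of the class is joined to $\bar S$ inside $\bar A_{\bar\pi}$. The closed loop $y \xrightarrow{\gamma_0} \gamma(s) \xrightarrow{R_{s,t}\gamma} \gamma(t) \xrightarrow{\gamma_1^{-1}} y$ is a mixed subloop, hence of integer circuitation, giving
\begin{equation*}
\int_{\gamma_0}\Lambda + \int_s^t \Lambda_\gamma \dot{|\gamma|}\,\mathcal L^1 - \int_{\gamma_1}\Lambda \in 2\pi\Z.
\end{equation*}
Since $\check w(\gamma(s)) = e^{i\int_{\gamma_0}\Lambda}$ and $\check w(\gamma(t)) = e^{i\int_{\gamma_1}\Lambda}$ by the definition \eqref{Equa:w_def} of $\check w$ and $\check w|_{\bar S}\equiv 1$, exponentiating yields $\check w(\gamma(t)) = \check w(\gamma(s))\,e^{i\int_s^t \Lambda_\gamma\dot{|\gamma|}\,\mathcal L^1}$; multiplying by the constant $\omega(y)$ gives \eqref{Equa:equa_cic} for $w$.

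For the converse, let $w$ solve \eqref{Equa:equa_cic} for all test plans and set $u = w/\check w \in L^\infty(\mu,\mathbb S^1)$. Because both $w$ and $\check w$ satisfy \eqref{Equa:equa_cic}, the exponential factors cancel and $u(\gamma(t)) = u(\gamma(s))$ for $\pi$-a.e.\ $\gamma$ and a.e.\ $s,t$, i.e.\ $\frac{d}{dt}u(\gamma(t)) = 0$ along $\pi$-a.e.\ $\gamma$ for every $\pi$. Proposition~\ref{Prop:zero_Lambda} then forces $u$ to be constant on each equivalence class up to a $\mu$-negligible set, so $u$ descends to a function $\omega \in L^\infty(\zeta,\mathbb S^1)$ on $X/\bar E$ with $w = \omega\,\check w$, as claimed. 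I expect the main obstacle to be the mixed-loop step of the second paragraph: one must simultaneously control the circuitation of loops built from two different test plans (handled by Corollary~\ref{Cor:count_m}) and guarantee the connecting subcurves $\gamma_0,\gamma_1$ genuinely exist inside the class — which is exactly where Assumption~\ref{Ass:disi_cons} is indispensable, as Example~\ref{Ex:notgoodsol} shows the conclusion fails without it. The remaining difficulty is bookkeeping: assembling the various $\mu$-, $\pi$- and $\mathcal L^1$-null sets into a single exceptional set valid for all $\pi$.
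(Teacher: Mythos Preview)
Your proposal is correct and follows essentially the same route as the paper: define $w=\omega\,\check w$, use Corollary~\ref{Cor:count_m} on the pair $\{\bar\pi,\pi\}$ together with Corollary~\ref{Cor:set_good} and Assumption~\ref{Ass:disi_cons} to build the mixed subloop and invoke integer circuitation exactly as in \eqref{Equa:triang_circu}, and for the converse apply Proposition~\ref{Prop:zero_Lambda} to $w/\check w$. The only cosmetic difference is that the paper connects $\gamma(s)$ to $\gamma(t)$ by a single $\bar\pi$-subcurve $\gamma'$ rather than two legs $\gamma_0,\gamma_1$ through the section point $y$, but the resulting loop and computation are the same.
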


In particular the map
$$
L^\infty(\zeta,\mathbb S^1) \ni \omega(y) \longleftrightarrow \omega(\bar Ex) \check w(x) \in L^\infty(\mu,\mathbb S^1)
$$
is bijective on the set of solutions.

\begin{proof}
Given a function $\omega \in L^\infty(\zeta,\mathbb S^1)$, the following extension of formula \eqref{Equa:w_def} constructs the candidate function for the first part of the statement:
\begin{equation}
\label{Equa:w_def_1}
w(x;\omega) = \omega(\bar Ex) e^{i \int_{s(\gamma)}^1 \Lambda(\gamma) \dot{|\gamma|} \mathcal L^1} = \omega(Ex) \check w,
\end{equation}
where %$y \in \bar S \cap Ex$ and 
$\gamma \in \Gamma_{\bar A_{\bar \pi},\bar S}$ is a subcurve of $\bar A_{\bar \pi}$ connecting $y$ to $x$. 

We need to prove that the integral formula \eqref{Equa:equa_cic} holds for all $\pi$. Let $\pi$ be a test plan: by removing a negligible set of curves, we can assume that $(\pi + \bar \pi)/2$ is concentrated on a set of integer circuitation curves, together with its subloops (Corollary \ref{Cor:count_m}). Being the equivalence relation $\bar E$ maximal in the sense of Proposition \ref{Prop:perf_test}, we can apply Corollary \ref{Cor:set_good}: the curves $\gamma$ on which $\pi$ is concentrated are such that
\begin{enumerate}
\item $\gamma(t)$ belongs to the same equivalence class of $\bar E$ up to a $\mathcal L^1$-negligible subset of $[0,1]$; %for $\mathcal L^1$-a.e. $t$ and $\gamma(0) \bar E \gamma(1)$;
\item the function $w(\gamma(t);\omega)$ given by formula \eqref{Equa:w_def_1} is defined $\mathcal L^1$-a.e.
\end{enumerate}
Since the disintegration is strongly consistent, for every $\gamma$ as above there is a subcurve curve $\gamma'$ of the set where $\bar \pi$ is concentrated connecting $\gamma(s)$ to $\gamma(t)$ (by Point (1) above and Assumption \ref{Ass:disi_cons}). Being $\pi + \bar \pi$ of integer circuitation, we deduce as in \eqref{Equa:triang_circu} that \eqref{Equa:equa_cic} holds.

%\begin{equation*}
%\mathcal L^1 \big( \gamma^{-1}(\text{different class}) \big) = 0.
%\end{equation*}
%In particular the condition holds.
%
%by restricting the set where it is concentrated, we can assume that $\pi + \bar \pi$ are of integer circuitation, together with subloops. The assumptions on the integer circuitation implies that the statement is valid, i.e.
%\begin{equation*}
%w(\gamma(1)) - w(\gamma(0)) = e^{i \int_0^1 d(\gamma(\tau)) \dot{|\gamma|}(\tau) d\tau},
%\end{equation*}
%being equal to the other path considered in $\bar \pi$.

Finally we show that every solution $w$ is computed in this way, i.e. the second part of the statement. By construction, along $\bar \pi$-a.e. $\gamma$
\begin{equation*}
\frac{d}{dt} \frac{w(\gamma(t))}{\check w(\gamma(t))} = \frac{w(\gamma(t))}{\check w(\gamma(t))} \big( \Lambda(\gamma)(t) \dot{|\gamma|}(t) - \Lambda(\gamma)(t) \dot{|\gamma|}(t) \big) = 0,
\end{equation*}
so that we can apply Proposition \ref{Prop:zero_Lambda}.
\end{proof}

\section{The $W^{1,2}$-case}
\label{S:W12}

In this section we apply the above analysis to the case $\mu = |f|^2 m/\|f\|_2^2$, where $m$ is now the ambient measure on $X$ and $f \in W^{1,2}(X,m)$. We still need some assumptions on the structure of the MMS $(X,d,m)$, as explained in the introduction.

\begin{assumption}
\label{Assu:local_test}
For $m$-a.e. point $x \in X$ there exist two sequence of numbers $r_n,R_n$, $r_n \leq R_n$, converging to $0$ and a test plan $\pi_n \in \mathcal P(\Gamma)$ such that for some constant $C_x$
\begin{equation*}
\int_0^1 e(t)_\sharp (\dot{|\gamma|} \pi_n) dt \leq \frac{C_x}{m(B_{r_n}(x))} m \llcorner_{B_{R_n}(x)}, \quad L(\gamma) \leq 2R_n, \ m(B_{R_n}(x)) \leq C_x m(B_{r_n}(x)),
\end{equation*}
and
\begin{equation*}
%m \times m \ll 
(e(0),e(1))_\sharp \pi_n = \frac{1}{m(B_{r_n}(x))^2} m \times m \llcorner_{B_{r_n}(x) \times B_{r_n}(x)}.
%m \times m \ll (e(0),e(1))_\sharp \bar \pi \ll m \times m.
\end{equation*}

%In other words, $m \times m$ every couple of points $y,y' \in B_{r_n}(x)$ are connected by a curve $\gamma_{y,y'}$, of length at most $2R_n$ and the measure
%\begin{equation*}
%\pi_n = \frac{1}{m(B_{r_n}(0))^2} (\gamma_{(y,y')})_\sharp \big( m(dy) \times m(dy') \llcorner_{B_{r_n}(x) \times B_{r_n}(x)} \big)
%\end{equation*} 
%is a test plan with compressibility constant bounded by $m(B_{r_n}(x))^{-1}$. Moreover the measure of the balls $B_{R_n}(0)$ is proportional to $m(B_{r_n}(x))$.

%
%There exists test plan $\bar \pi$ which is concentrated on the set of curves
%\begin{equation*}
%\big\{ \gamma : L(\gamma) \leq K d(\gamma(0),\gamma(1)) \big\}
%\end{equation*}
%for some $K \geq 1$ and such that
%\begin{equation*}
%%m \times m \ll 
%m \times m \ll (e(0),e(1))_\sharp \bar \pi \ll m \times m.
%\end{equation*}
%%For every ball $B^X_r(x)$ and subsets $A,A' \subset B^X_r(x)$ of positive measure there is a test plan $\pi$ concentrated on curves with range inside $B^X_r(x)$ with length bounded by $Cr$ and such that
%%\begin{equation*}
%%m \llcorner_A \leq C e(0)_\sharp \pi, \quad m \llcorner_{A'} \leq C e(1)_\sharp \pi.
%%\end{equation*}
\end{assumption}

Up to the restriction that the curves $\gamma$ used by $\pi_n$ are localized inside the ball $B_{R_n}(x)$, the test plans $\pi_n$ are the Democratic plans of \cite{lottvillani:weakcur}.

\begin{example}
\label{Ex:torus_Riemann}
The above condition holds for example for $[0,1]^d$, $d \in \N$, with $m = \prod_i \mathcal L^1 \llcorner_{[0,1]}$: indeed one considers the image measure
\begin{equation*}
((1-t)x + ty)_\sharp ( m \times m )
\end{equation*}
which is equivalent to $m$ by coarea formula, being
\begin{equation*}
\sqrt{\sum_i (\det M_i)^2} = \sqrt{((1-t)^2+t^2)^d} \simeq 1,
\end{equation*}
where $M_i$ are the $d \times d$-minors of the matrix $[(1-t) \Id \ t \Id]$. In particular, it holds in a Riemann manifold by a slight variation of the above computation. %for every small ball $B_r(x)$ there is a test plan mapping..

More generally, it holds in non-branching metric measure space under the Measure Contraction Property $MCP(K,N)$ \cite[Definition 2.1]{otha:contr}, just by adapting the proof of \cite[Theorem 3.1]{lottvillani:weakcur}.

In the case of branching spaces, the same condition holds if (locally, being all computations done in a small ball) %ambient measure $m$ is the Hausdorff $N$-dimensional measure 
\begin{equation}
\label{Equa:meas_balls_contr}
\frac{1}{C} \leq \frac{m(B_r(x))}{r^N} \leq C
\end{equation}
and for every $x \in X$, $0 < r < 1$, there is a transport plan $\pi_{x,r} \in \mathrm{Geo}(X)$ concentrated on geodesics such that
\begin{equation}
\label{Equa:MCP+1}
e(0)_\sharp \pi_{x,r} = \delta_x, \ e(1)_\sharp \pi_{x,r} = \frac{m \llcorner_{B_r(x)}}{m(B_r(x))}, \quad e(t)_\sharp \pi_{x,r} \leq \frac{C}{t^N} \frac{m \llcorner_{B_{rt}(x)}}{m(B_r(x))}. 
\end{equation}
In particular, it holds for branching $MCP(K,N)$-spaces with the Hausdorff measure $\mathcal H^N$ as ambient measure $m$ \cite{rajala:interpo}.

To prove this fact, consider any ball $B_{\bar r}(\bar x)$, $0 < \bar r < 1/2$, and define
\begin{equation*}
\bar \pi = \fint_{B_{\bar r}(\bar x)} \tilde \pi_{y,\bar x,\bar r} m(dy), \quad \tilde \pi_{y,\bar x,\bar r} = \frac{m(B_{2\bar r}(y))}{m(B_{\bar r}(\bar x))} \pi_{y,2\bar r} \llcorner_{\{\gamma(1) \in B_{\bar r}(\bar x)\}}.
\end{equation*}
Being $\pi_{x,\bar r} \in \mathcal P(\mathrm{Geo}(X))$, i.e. it is concentrated on the set of geodesic, it follows that %$e(t)_\sharp \pi_{x,\bar r}$ is concentrated on $B(x,2\bar r)$, 
$L(\gamma) \leq 2 \bar r$, and thus $e(t)_\sharp \pi_{y,\bar x,\bar r}$ is concentrated on $B_{2\bar r}(\bar x)$ for $y \in B_{\bar r}(\bar x)$. The measure contraction property (the last estimate of \eqref{Equa:MCP+1}) gives that $m(B_{2\bar r}(\bar x)) \leq C 2^{N} m(B_{\bar r}(\bar x))$. Moreover by construction
$$
(e(0),e(1))_\sharp m = \frac{1}{m(B_{\bar r}(\bar x))^2} m \times m \llcorner_{B_{\bar r}(\bar x) \times B_{\bar r}(\bar x)}.
$$
Finally, consider a ball $B_r(x)$ and compute
\begin{equation*}
\begin{split}
e(t)_\sharp \bar \pi(B_r(x)) &= \fint e(t)_\sharp \pi_{y,\bar x,\bar r}(B_r(x)) m(dy) \leq \min \bigg\{  1, \frac{C m(B_r(x))}{t^N m(B_{\bar r}(\bar x))} \bigg\} \frac{m(B_{r+2t \bar r}(x))}{m(B_{\bar r}(\bar x))},
\end{split}
\end{equation*}
where we have used the last estimate in \eqref{Equa:MCP+1} and the simple observation
\begin{equation*}
e(t)_\sharp \bar \pi_{y,\bar x,\bar r}(B_r(x)) \leq \begin{cases}
%\frac{1}{m(B_{\bar r}(\bar x))} 
1 & y \in B_{r+2t \bar r}(x), \\
0 & \text{otherwise}.
\end{cases}
\end{equation*}
Using again the measure contraction property one obtains %the (iterated) doubling measure condition
\begin{equation*}
m(B_{r+t \bar r}(x)) \leq C \bigg( 1 + \frac{t \bar r}{r} \bigg)^N m(B_r(x)),
\end{equation*}
and then for $t \geq r/\bar r$ and using \eqref{Equa:meas_balls_contr}
\begin{equation*}
\begin{split}
\frac{C m(B_r(x))}{t^N m(B_{\bar r}(\bar x))} \frac{m(B_{r + 2t \bar r}(x))}{m(B_{\bar r}(\bar x))} &\leq C^2 \bigg( \frac{1}{t} + \frac{2 \bar r}{r} \bigg)^N \frac{m(B_r(x))^2}{m(B_{\bar r}(\bar x))^2} \leq \frac{C^2 (3\bar r)^N}{r^N} \frac{m(B_r(x))^2}{m(B_{\bar r}(\bar x))^2} %\bigg( \frac{1+r}{\bar r} \bigg)^N 
%m(B_r(x)) 
\\
&\leq C^4 \frac{m(B_r(x))}{m(B_{\bar r}(\bar x))}. 
\end{split}
\end{equation*}
For $t \leq r/\bar r$ we have $m(B_{r+2t \bar r}(x)) \leq C 4^N m(B_r(x))$, and thus we obtain
\begin{equation*}
e(t)_\sharp \bar \pi(B_r(x)) \leq \frac{C''}{m(B_{\bar r}(x))} m(B_r(x))
\end{equation*}
for some constant $C''$. This shows that $e(t)_\sharp \bar \pi \leq C'' m/m(B_{\bar r}(\bar x))$.
\end{example}

%
%\begin{example}
%\label{Ex:torus}
%As an example, we can consider the torus $\mathbb T^d$: the map
%\begin{equation*}
%\mathbb T^d \times \mathbb T^d \ni (x,x') \mapsto \gamma_{x,x'}(t) = (1-t) x + t x' \in \Gamma
%\end{equation*}
%generates the plan
%\begin{equation*}
%\pi = (\gamma_{x,x'})_\sharp (\mathcal L^d \times \mathcal L^d).
%\end{equation*}
%For every $\bar t$, the map
%\begin{equation*}
%\mathbb T^d \times \mathbb T^d \ni (x,x') \mapsto \gamma_{x,x'}(\bar t) \in \mathbb T^d
%\end{equation*}
%has Jacobian $((1-t)\, \Id,t\, \Id)$, which is of full rank. Then Coarea formula gives that the image measure is $\ll \mathcal L^d$, and being invariant for translations we have
%\begin{equation*}
%\gamma(t)_\sharp \pi = \mathcal L^d.
%\end{equation*}
%\end{example}
%
%%\begin{example}
%%\label{Ex:gaussian}
%%An example is a Gaussian measure $N_{\lambda,Q}$ on Hilbert spaces $H$: indeed in the product space $H \times H$ one considers the Gaussian measure
%%$$
%%N_{\lambda,Q} \times N_{\lambda,Q} = N_{(\lambda,\lambda),\diag(Q,Q)}
%%$$
%%and the map $(x,y) \mapsto T_t(x,y) = (1-t) x + t y$. By the formula {\bf citare Guassian measure book} one has
%%\begin{equation*}
%%(T_t)_\sharp N_{(\lambda,\lambda),\diag(Q,Q)} = N_{\lambda, ((1-t)^2 + t^2)Q},
%%\end{equation*}
%%which corresponds to 
%%
%%
%%\end{example}
%
%Under Assumption \ref{Assu:local_test} the previous analysis is much easier.

\begin{lemma}
\label{Lem:one_class}
There exists a test plan $\pi$ satisfying Assumption \ref{Assu:local_test} and such that the equivalence relation $\bar E$ given by \eqref{Equa:bar_E} is made of countably many equivalence classes of positive measure.
\end{lemma}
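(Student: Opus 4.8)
Since $\{|f|>0\}=\bigcup_j\{|f|\geq 1/j\}$ carries full $\mu$-mass, the plan is to reduce the statement to two facts: that $\mu$-a.e.\ point lies in a $\bar E$-class of positive $\mu$-measure, and that in a probability space at most countably many pairwise disjoint sets can carry positive mass. Granting the first fact, for each $k$ at most $k$ of these classes have measure $\geq 1/k$, so there are only countably many, and they exhaust $X$ up to a $\mu$-null set. Everything therefore rests on producing, around $\mu$-a.e.\ point, a $\mu$-admissible test plan whose curves connect a positive $\mu$-measure set of points lying in a single class.

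First I would build the candidate plan. It suffices to work on each sublevel set $\{|f|\geq\delta\}$, $\delta=1/j$, at its points of $m$-density $1$ (a set of full measure there). At such a point $x$, Assumption \ref{Assu:local_test} furnishes the localized Democratic plans $\pi_n$, which connect $m\times m$-a.e.\ pair of $B_{r_n}(x)$, hence (as $\mu\ll m$) $\mu\times\mu$-a.e.\ pair. The crucial modification is to restrict $\pi_n$ to the curves that remain inside $\{|f|\geq\delta/2\}$: on their range one has $m\leq(4\|f\|_2^2/\delta^2)\mu$, so the $m$-compressibility bound of Assumption \ref{Assu:local_test} turns into a genuine $\mu$-compressibility bound and the restricted plan is $\mu$-admissible. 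Using the separability of $X$ (Lindel\"of), I would then select a countable subfamily of such balls covering $\{|f|>0\}$ up to a $\mu$-null set and assemble the corresponding restricted plans into a single $\mu$-test plan $\pi$ by the normalized countable combination \eqref{Equa:countabl_pi}; this $\pi$ uses precisely the curves provided by Assumption \ref{Assu:local_test}.

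Next I would show that each such ball sits inside a single $\bar E$-class of positive measure. Let $G'\subset\{|f|\geq\delta\}\cap B_{r_n}(x)$ be the set of endpoints that the restricted plan connects to one another. If $\Omega\in\mathcal M^\pi$ is saturated with $0<\mu(\Omega\cap G')<\mu(G')$, a Fubini argument—for $\mu\times\mu$-a.e.\ pair in $G'\times G'$ the two points are $E^{\bar A_\pi}$-equivalent, so one cannot lie in the saturated set while the other does not—yields a contradiction; hence $G'$ lies $\mu$-a.e.\ in one atom of $\mathcal M^\pi$, of measure at least $\mu(G')>0$. Since such $G'$ surround $\mu$-a.e.\ point, $\mathcal M^\pi$ is purely atomic with positive-measure atoms, which establishes the two facts above for $\pi$. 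To pass to the $\bar E$ of \eqref{Equa:bar_E}, I note that by Proposition \ref{Prop:perf_test} the minimal measure algebra satisfies $\mathcal M^{\bar\pi}\subseteq\mathcal M^\pi$; a sub-algebra of a purely atomic algebra is purely atomic with atoms that are unions of the original ones, so $\mathcal M^{\bar\pi}$ too has only countably many atoms, all of positive measure, i.e.\ $\bar E$ has countably many classes of positive measure.

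The main obstacle is exactly the mismatch between the ambient measure $m$, with respect to which the Democratic plans are admissible, and the reference measure $\mu=|f|^2m/\|f\|_2^2$: a $\mu$-test plan may not charge $\{f=0\}$, so its curves must avoid the region where $|f|$ is small, whereas the Democratic geodesics need not. The delicate point is therefore to verify that, after discarding the curves that dip into $\{|f|<\delta/2\}$, the restricted plan still connects a \emph{positive} $\mu$-measure set $G'$ of local endpoints. I expect to control the discarded curves by combining the $m$-density-$1$ property of $\{|f|\geq\delta\}$ at $x$ (which makes $\{|f|<\delta/2\}\cap B_{r_n}(x)$ of vanishingly small relative $m$-measure) with the measure contraction property used in Example \ref{Ex:torus_Riemann}, so as to bound the measure of endpoints whose connecting geodesic meets the thin sublevel set; this is the step that genuinely uses the $MCP(K,N)$ structure rather than the abstract theory of Sections \ref{S:constr_min_eq}--\ref{S:constr_w}.
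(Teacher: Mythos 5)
Your proposal is not a proof of Lemma \ref{Lem:one_class} as the paper intends it: it is (an attempt at) a proof of the \emph{next} statement, Proposition \ref{Prop:pos_measure}. Lemma \ref{Lem:one_class} is formulated before the function $f$ is introduced; it concerns only the ambient measure $m$ and the $m$-admissible Democratic plans of Assumption \ref{Assu:local_test}, and the weighted measure $\mu=|f|^2m/\|f\|_2^2$ plays no role in it. The paper's own proof is accordingly short: by Assumption \ref{Assu:local_test}, $\pi_n$ connects $m\times m$-a.e.\ pair of points of $B_{r_n}(x)$, so by transitivity each such ball lies in a single equivalence class up to an $m$-negligible set (and removing a negligible set of trajectories changes the class only by an $m$-negligible set); by separability countably many balls $B_{r_n}(x_n)$ cover $m$-a.e.\ point; for each pair $(n,n')$ for which it exists one adds a connecting plan $\pi_{nn'}$ with $e(0)_\sharp\pi_{nn'}\ll m\llcorner_{B_{r_n}(x_n)}$ and $e(1)_\sharp\pi_{nn'}\ll m\llcorner_{B_{r_{n'}}(x_{n'})}$, and one combines all these plans into a single test plan by the normalized sum \eqref{Equa:countabl_pi}. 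The classes of $\bar E$ are then unions of such balls, hence countably many and of positive measure. All of your machinery — restricting curves to superlevel sets of $|f|$, converting the $m$-compressibility bound into a $\mu$-compressibility bound, working at density points of $\{|f|\geq\delta\}$ — addresses difficulties that simply do not arise in this lemma.

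Even judged as a proof of the $\mu$-statement (Proposition \ref{Prop:pos_measure}), your argument has a genuine gap exactly at the step you flag as delicate, and the method you propose there would fail. The compressibility/occupation bound of Assumption \ref{Assu:local_test} controls the total \emph{length} that the curves of $\pi_n$ spend inside $\{|f|<\delta/2\}$, not the $\pi_n$-measure of the set of curves that \emph{touch} it: a set of arbitrarily small, even zero, $m$-measure can be crossed by a family of curves of full $\pi_n$-measure, each spending arbitrarily little length in it, so density~$1$ of $\{|f|\geq\delta\}$ at $x$ together with MCP cannot bound the discarded curves. (Note also that "curves remaining in $\{|f|\geq\delta/2\}$" is not even well defined for an equivalence class $f\in L^2$ unless one passes to the absolutely continuous representative of $f\circ\gamma$ along $\pi_n$-a.e.\ curve.) What actually closes this gap — and what the paper does — is the $W^{1,2}$ regularity of $f$ along curves: for $\pi_n$-a.e.\ $\gamma$ one has $|f(\gamma(t))|\geq|f(\gamma(0))|-\int_0^1|Df|(\gamma(s))\dot{|\gamma|}(s)\,ds$, and near a Lebesgue point of $|f|$ and $|Df|$ a Chebyshev estimate on $\gamma\mapsto\int_0^1|Df|(\gamma(t))\dot{|\gamma|}(t)\,dt$ (whose $\pi_n$-average is of order $2R_n+C_{\bar x}^2\epsilon$) shows that outside a small set of curves the quantity $|f\circ\gamma|$ stays above $1/4$, so these curves are automatically $\mu$-admissible and their endpoints fill a positive-measure set. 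Your remaining steps — the Fubini argument that a saturated set cannot split the connected endpoint set, and the passage from $\mathcal M^{\pi}$ to $\mathcal M^{\bar\pi}$ via Proposition \ref{Prop:perf_test} (atoms of a sub-measure-algebra of a purely atomic one are unions of atoms) — are sound, but without the upper-gradient step the construction of even one positive-measure class does not go through.
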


\begin{proof}
Take a point $x$ as in Assumption \ref{Assu:local_test}, fix $n$ and let $A_{\pi_n}$ be a set of curves where $\pi_n$ is concentrated: by assumption $m$-a.e. point $x_1 \in B_{r_n}(x)$ is connected by a curve to $m$-a.e. point $x_2 \in B_{r_n}(x)$. By the transitivity of the equivalence relation, all these points are in the same equivalence class, i.e. $B_{r_n}(x)$ is in a single equivalence class up to a $m$-negligible set. It is clear that by removing a negligible set of trajectories we changes the equivalence class only in a $m$-negligible set.

Let $\{\Omega_n\}_n = \{B_{r_n}(x_n)\}_n$ be the family of balls as above covering $m$-a.e. $x \in X$. Let $\pi_{nn'}$ be a test plan (if it exists) such that
\begin{equation*}
e(0)_\sharp \pi_{nn'} \ll m \llcorner_{B_{r_n}(x_n)}, \quad e(1)_\sharp \pi_{nn'} \ll m \llcorner_{B_{r_{n'}}(x_{n'})}.
\end{equation*}
The test plan of the statement is
\begin{equation*}
\bar \pi = \sum_n c_n \pi_n + \sum_{nn'} c_{nn'} \pi_{nn'},
\end{equation*}
with $c_n,c_{nn'}$ chosen as in \eqref{Equa:countabl_pi} to have a probability measure.
%
%Considering 
%
%: by definition of $\pi_n$ it follows that 
%
%for every set $A_\pi \subset \Gamma$ where $\pi$ is concentrated every point in $B_{r_n}(x) \cap  \{\gamma(0), \gamma \notin A_\pi\}$ is connected to every other point in $B_{r_n}(x) \cap \{\gamma(1): \gamma \in A_\pi\}$
%
%$m$-a.e. point $x \in B_{r_n}(x)$ is connected by a curve $\gamma$ to $m$-a.e. point $y \in \B_{r_n}(x_n)$, i.e. the equivalence class generated by $\pi_n$ is $B_{r_n}($
%
%The transport plan $\bar \pi$ is concentrated on sets $A_{\bar \pi}$ of geodesics such that $m$-a.e. point is connected to $m$-a.e. other point by a curve in $A_{\bar \pi}$, and this gives that there in an equivalence class of $m$-full measure for all $A_{\bar \pi}$.
\end{proof}

%
%\begin{remark}
%\label{Rem:gen_RCD}
%The construct below can be performed also in the case of the disintegration w.r.t. $\bar E$ is consistent, by repeating the argument below in every class $E_\alpha$ with the measure $m_\alpha$ and taking care of the measurability issues w.r.t. the quotient measure $m$.
%\end{remark}

We next consider a function $f \in W^{1,2}(X,m)$ %\cap L^\infty(X,m)$, 
and define probability measure
\begin{equation*}
\mu = \frac{1}{\|f\|_2^2} |f|^2 m.
\end{equation*}
We assume that we are given the cotangent vector field $v \in L^2(T^*X)$ on the space $(X,\mu)$, and we denote with $\Lambda_\gamma(t)$ the corresponding function as in \eqref{Equa:plan_ct}.

\begin{proposition}
\label{Prop:pos_measure}
There exists a test plan $\bar \pi$ (w.r.t. the measure $\mu$) satisfying Assumption \ref{Ass:disi_cons} such that its equivalence classes are of positive measure.
\end{proposition}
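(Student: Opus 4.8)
The plan is to deduce Assumption \ref{Ass:disi_cons} from the single geometric fact that the equivalence classes of $\bar E$ carry positive $\mu$-mass. Indeed, if $\mu$-a.e. point lies in an $E^{\bar A_{\bar\pi}}$-class of positive $\mu$-measure, then --- $\mu$ being a probability --- there are at most countably many such classes, so they form a countable measurable partition; the disintegration \eqref{Equa:mu_bar_E_dis} with respect to this partition is then automatically strongly consistent, with $\mu_\alpha=\mu\llcorner_{\Omega_\alpha}/\mu(\Omega_\alpha)$ concentrated on the class $\Omega_\alpha$. Moreover it suffices to exhibit \emph{one} $\mu$-admissible plan $\pi^*$ whose generated relation $E^{\pi^*}$ has positive-measure classes covering $\mu$-a.e.\ $x$: by Proposition \ref{Prop:perf_test} the minimal-maximal plan satisfies $\mathcal M^{\bar\pi}\subset\mathcal M^{\pi^*}$, which by the order-reversing correspondence between saturated $\sigma$-algebras and relations means $E^{\pi^*}\subset E^{\bar A_{\bar\pi}}$ up to $\mu$-null sets, so the classes of $\bar\pi$ \emph{contain} those of $\pi^*$ and are a fortiori of positive measure.

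To build $\pi^*$ I would localize exactly as in Lemma \ref{Lem:one_class}, transferring the Democratic $m$-plans of Assumption \ref{Assu:local_test} to the weighted measure $\mu=|f|^2m/\|f\|_2^2$. Fix a point $x$ that is simultaneously a Lebesgue point of $|f|^2$ and of $|Df|^2$ with $|f(x)|>0$ (this holds $\mu$-a.e.), set $\delta=|f(x)|/2$, and take the plans $\pi_n$ localized in $B_{R_n}(x)$. The essential estimate is that the total oscillation of $f$ along $\pi_n$ tends to $0$: by Cauchy--Schwarz against the true occupation mass $\int L(\gamma)\,d\pi_n\le 2R_n$ together with the bound $\int_0^1 e(t)_\sharp(\dot{|\gamma|}\pi_n)\,dt\le \frac{C_x}{m(B_{r_n}(x))}\,m\llcorner_{B_{R_n}(x)}$ one obtains
\begin{equation*}
\int\Big[\int_0^{L(\gamma)}|Df|(\gamma)\,\dot{|\gamma|}\,dt\Big]\pi_n(d\gamma)\le\Big(\tfrac{C_x}{m(B_{r_n}(x))}\int_{B_{R_n}(x)}|Df|^2\,dm\Big)^{1/2}(2R_n)^{1/2}\xrightarrow[n\to\infty]{}0.
\end{equation*}
Hence, by Chebyshev, the $\pi_n$-measure of curves whose image meets $\{|f|\le\delta\}$ tends to $0$, since (up to a vanishing fraction with an endpoint already in $\{|f|\le 3\delta/2\}$) such a curve must have oscillation at least $\delta/2$. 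Restricting $\pi_n$ to the complementary set of good curves, their occupation is carried by $\{|f|>\delta\}$, where $m\llcorner_{\{|f|>\delta\}}\le\delta^{-2}\|f\|_2^2\,\mu$, so the restricted plan is $\mu$-admissible; and as the discarded curves are $\pi_n$-negligible in the limit, its endpoint marginal still covers a subset of $B_{r_n}(x)$ of positive (indeed almost full) $\mu$-measure.

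Arguing then as in Lemma \ref{Lem:one_class}, the good curves connect $\mu$-a.e.\ point of a positive-measure neighbourhood of $x$ into a single class; covering $\{|f|>0\}$ by countably many such balls together with connecting plans and combining them through \eqref{Equa:countabl_pi} produces a single $\mu$-test plan $\pi^*$ whose classes are of positive $\mu$-measure, and therefore countably many. By the reduction of the first paragraph this yields a minimal-maximal $\bar\pi$ with positive-measure classes coinciding, up to $\mu$-null sets, with the atoms of $\mathcal M^{\bar\pi}$; thus $\bar E=E^{\bar A_{\bar\pi}}$ up to null sets and Assumption \ref{Ass:disi_cons} holds.

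The main obstacle is the confinement estimate of the second paragraph: a priori the Democratic curves, though short, may dip into the vacuum $\{f=0\}$ on which $\mu$ vanishes, so that an unrestricted $m$-plan is never $\mu$-admissible. The key is that the integrated oscillation is controlled not by the lossy occupation bound alone but by its product with the \emph{vanishing} total length $2R_n$, which forces almost every short curve issuing near a point with $|f|\approx|f(x)|>0$ to stay uniformly away from the vacuum. A secondary, purely measure-theoretic care is the reconciliation with the minimal-maximal plan through Corollary \ref{Cor:set_good} and Proposition \ref{Prop:perf_test}: one must check that passing to $\bar\pi$ \emph{enlarges}, rather than shatters, the positive-measure classes --- the Vitali phenomenon of Example \ref{Ex:notgoodsol} being precisely what the positivity of the class measure rules out.
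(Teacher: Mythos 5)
Your local construction is sound and is essentially the paper's own proof: Democratic plans $\pi_n$ at a Lebesgue point of $|f|$ and $|Df|$, smallness of $\int\big[\int |Df|(\gamma)\,\dot{|\gamma|}\,dt\big]\pi_n(d\gamma)$ obtained from the occupation bound combined with the vanishing total length $2R_n$ (your Cauchy--Schwarz variant is fine; the paper instead splits $|Df|=1+(|Df|-1)$ after normalizing at the Lebesgue point), Chebyshev to discard the curves that reach $\{|f|\le\delta\}$, $\mu$-admissibility of the restricted plan because its occupation measure is carried by $\{|f|>\delta\}$ where $m\llcorner_{\{|f|>\delta\}}\le \delta^{-2}\|f\|_2^2\,\mu$, and a single positive-measure class from the overlap of the endpoint sets. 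Your first reduction (countably many positive-measure classes imply strong consistency of the disintegration) is exactly Corollary \ref{Cor:disint}.

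The genuine flaw is the transfer to the minimal--maximal plan. From $\mathcal M^{\bar\pi}\subset\mathcal M^{\pi^*}$ you infer $E^{\pi^*}\subset E^{\bar A_{\bar\pi}}$ up to $\mu$-null sets; but the correspondence between relations and saturated algebras is order-reversing only in the direction $E\subset E'\Rightarrow\mathcal M^{E'}\subset\mathcal M^{E}$, and the converse implication (from an inclusion of measure algebras back to an inclusion of relations) is precisely what the Vitali phenomenon destroys. Concretely, on $\mathbb T^2$ with $\mathcal L^2$ let $\pi^*$ be the democratic plan of Example \ref{Ex:torus_Riemann} (one class of full measure, trivial saturated algebra) and let $\bar\pi$ be a test plan supported on segments of orbits of an irrational translation flow: by ergodicity $\mathcal M^{\bar\pi}$ is also trivial, so this $\bar\pi$ is a perfectly legitimate minimal--maximal plan, yet all of its equivalence classes are $\mathcal L^2$-null and contain no class of $\pi^*$. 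Hence positivity of the classes of $\pi^*$ does not by itself prevent a given minimal--maximal plan from shattering them, contrary to your closing remark. The statement survives because only existence of one good $\bar\pi$ is claimed: as the paper does through Lemma \ref{Lem:one_class} and \eqref{Equa:countabl_pi}, you must \emph{build} $\bar\pi$ by combining $\pi^*$ with a sequence of plans realizing the minimal algebra. Then every generating set $A$ of the combined plan satisfies $\pi^*(A)=1$, so $E^{A}\supset E^{A\cap\bar A_{\pi^*}}$, whose classes are (by the essential invariance of your local construction under removal of $\pi^*$-null sets of curves) the positive-measure classes of $\pi^*$ up to null sets; and the combined plan still generates $\mathcal M^{\bar\pi}$ by \eqref{Equa:remov_1} and Proposition \ref{Prop:perf_test}, since adding curves can only coarsen the relation while minimality bounds the algebra from below. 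With this replacement of your first paragraph, the argument is complete.
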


\begin{proof}
As in the previous proof, it is enough to prove the statement locally by showing that nearby to a Lebesgue point of $|f|,|Df| > 0$ there exists a plan $\bar \pi$ generating an equivalence class of positive measure essentially invariant w.r.t. the choice of the sets $A_{\bar \pi}$.

Let $\bar x \in X$ be a Lebesgue point for $|f|$ and $|Df|$ (being $|Df|$ the upper gradient), i.e. for definiteness
\begin{equation}
\label{Equa:Lebesgue_f_Df}
\fint_{B_{\bar r}(\bar x)} |f - 1|^2 m, \fint_{B_{\bar r}(\bar x)} ||Df| - 1|^2 m < \epsilon^2.
\end{equation}
Consider the ball $B_{r_n}(\bar x)$, with $R_n \leq \bar r$, and the test plan $\pi_n$ defined in Assumption \ref{Assu:local_test}: we have
%where $K$ is given by Assumption \ref{Assu:local_test}: in particular the curves used by the test plan $\bar \pi$ with end points in $B_{r/2K}(\bar x)$ are contained in $B_r(\bar x)$. 
%
%{\bf Here we consider a test plan such that the assumption holds locally and the trajectories are contained: it is a probability measure, so that its projection is $\mu \llcorner_{B_{r/2K}(\bar x)}/\mu(B_{r/2K}(\bar x))$}
%
%Setting
%\begin{equation*}
%\pi_r = \bar \pi \llcorner_{\{\gamma : \gamma(0), \gamma(1) \in B_{r/2K}(\bar x)\}},
%\end{equation*}
%it thus follows that %sthen
\begin{equation*}
\begin{split}
\int \bigg[ \int_0^1 |Df|(\gamma(t)) \dot{|\gamma|}(t) dt \bigg] \pi_n(d\gamma) &= \int L(\gamma) \pi_n(d\gamma) + \int \bigg[ \int_0^1 \big[ |Df|(\gamma(t)) - 1 \big] \dot{|\gamma|}(t) dt \bigg] \pi_n(d\gamma) \\
\big[ L(\gamma) \leq 2R_n, \ e(t)_\sharp \pi_n \leq C_{\bar x} m/m(B_{r_n}(\bar x)) \big] \quad &\leq 2R_n + C_{\bar x} \int_{B_{R_n}(\bar x)} ||Df|-1| \frac{m}{m(B_{r_n}(\bar x))} \\
&\leq 2R_n + C_{\bar x} \epsilon \frac{m(B_{R_n}(\bar x))}{m(B_{r_n}(\bar x))} \leq 2 R_n + C_{\bar x}^2 \epsilon.
\end{split}
\end{equation*}
%
%{\bf Here we need to assume some doubling measure condition}
%
In particular, using Chebyshev inequality, the $\pi$-measure of the set $\mathcal W$ of trajectories for which %$\|Df \circ \gamma\|_{L^1} > 1/2$ 
\begin{equation*}
\int_0^1 |Df|(\gamma(t)) \dot{|\gamma|}(t) dt > \frac{1}{2}
\end{equation*}
is bounded by
\begin{equation*}
\pi(\mathcal W) = \pi \bigg( \bigg\{ \gamma : \int_0^1 |Df|(\gamma(t)) \dot{|\gamma|}(t) dt > \frac{1}{2} \bigg\} \bigg) < 4 R_n + 2 C_{\bar x}^2 \epsilon < \frac{1}{64}
\end{equation*}
for $n \gg 1$.
%
%assume by contradiction that the equivalence relation $\bar E$ given by \eqref{Equa:bar_E} has not an 
%
%
%
%Let
%$$
%\check \pi = \frac{\bar \pi_{\delta,\bar x}}{\|\bar \pi_{\delta,\bar x}\|} = = \frac{\bar \pi_{\delta,\bar x}}{m(B_\delta(\bar s))}, \quad \bar \pi_{\delta,\bar x} = \bar \pi \llcorner_{\{\gamma: \gamma(0),\gamma(1) \in B_\delta(\bar x)\}}
%$$
%be the renormalized transport restricted to geodesics inside $B_\delta(\bar x)$: clearly it holds
%\begin{equation*}
%e(t)_\sharp \check \pi \leq C \frac{m}{\|\check \pi_{\delta,\bar x}\|},
%\end{equation*}
%for the same constant $C$ such that $e(t)_\sharp \bar \pi \leq C m$, so that we have
%\begin{equation*}
%\int \bigg[ \int_0^1 ||Df(\gamma(t))| - 1|^2 dt \bigg] \check \pi \leq C \fint_{B_\delta(\bar x)} ||Df| - 1|^2 m < C \epsilon^2. \end{equation*}
%In particular we deduce that the set $\mathcal W$ of trajectories
%\begin{equation*}
%\mathcal W = \bigg\{ \gamma : \int_0^1 ||Df(\gamma(t))| - 1|^2 dt > \frac{1}{2} \bigg\}
%\end{equation*}
%has $\check \pi$-measure less than $2C\epsilon^2$.

By the disintegration w.r.t. the map $e(0) : \gamma \mapsto \gamma(0)$,
\begin{equation*}
\pi_n = \fint_{B_{r_n}(\bar x)} \pi_{n,x} m(dx), %\quad g, \frac{1}{g} \in L^1%\quad \gamma \mapsto \gamma(0)
\end{equation*}
it follows that
\begin{equation*}
\frac{1}{m(B_{r_n}(\bar x))} m \bigg( \bigg\{ x : \pi_{n,x}(\mathcal W) \geq \frac{1}{8} \bigg\} \bigg) < \frac{1}{8}. %\sqrt{4R_n + C^2_{\bar x} \epsilon}.
\end{equation*}
For all other points, there exists a family of geodesics of $\pi_{n,x}$-measure greater than $7/8$ such that
\begin{equation}
\label{Equa:bound_df}
\int_0^1 ||Df(\gamma(t))| - 1|^2 \dot{|\gamma|} dt \leq \frac{1}{2}.
\end{equation}
By Assumption \ref{Assu:local_test},
\begin{equation*}
e(1)_\sharp \check \pi_{n,x} = \frac{m}{m(B_{r_n}(\bar x))},
\end{equation*}
so that the end points of the previous trajectories cover a subset of $B_\delta(\bar x)$ with $m$-measure larger than $7 m(B_{r_n}(\bar x))/8$.

The set of $x$ such that $|f(x) - 1|> 1/4$ has measure smaller than $16\epsilon^2 m(B_{r_n}(\bar x))$, so that there is a set of positive $m$-measure such that $|f(x) - 1| > 1/4$ and the trajectories starting from $x$ verifying \eqref{Equa:bound_df} have end points in a set with measure $> 7m(B_{r_n}(\bar x))/8$. In particular the trajectories starting from each of these points must intersect, and since
\begin{equation*}
|f(\gamma(t))| \geq |f(\gamma(0))| - \int_0^1 |Df(\gamma(s))| \dot{|\gamma|}(s) ds \geq \frac{1}{4},
\end{equation*}
then they live in an equivalence class of positive measure.
%
%
%Consider two sets $A,A' \subset B^X_r(\bar x)$ with the same $m$-measure and a test plan $\pi$ as in Assumption \ref{Assu:local_test}. By \eqref{Equa:Lebesgue_f_Df} by removing a set of measure $\epsilon$ in both sets we have that 
%\begin{equation*}
%f \llcorner_{A \cup A'} \geq 1- \epsilon
%\end{equation*}
%
%
%
%and assume that the equivalence equation $\bar E$ divides the set $B^X_\delta(\bar x)$ into two saturated sets with positive measure
%
%
%
\end{proof}

\begin{corollary}
\label{Cor:disint}
The disintegration w.r.t. the equivalence classes of $E^{\bar A_{\bar \pi}}$ for the measure $\mu$ is strongly consistent for every $\bar A_\pi$, hence that there exists a solution $w$.
\end{corollary}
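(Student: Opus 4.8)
The plan is to upgrade the positive-measure information of Proposition~\ref{Prop:pos_measure} to the strong consistency of the disintegration, and then to read off the existence of $w$ directly from the general construction of Theorem~\ref{Theo:w_final}.

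First I would invoke Proposition~\ref{Prop:pos_measure} to fix a test plan $\bar\pi$ (for the measure $\mu=|f|^2m/\|f\|_2^2$) whose equivalence classes have positive $\mu$-measure. Since $\mu$ is a probability measure, a family of pairwise disjoint sets of positive measure is necessarily at most countable; hence the partition of $X$ into the classes of $E^{\bar A_{\bar\pi}}$ consists of countably many measurable blocks $\{C_i\}_i$, which are exactly the atoms of the measure algebra $\mathcal M^{\bar\pi}$. For a countable partition into positive-measure atoms the disintegration \eqref{Equa:mu_bar_E_dis} is trivially strongly consistent: one simply has $\zeta=\sum_i\mu(C_i)\,\delta_i$ and $\mu_i=\mu\llcorner_{C_i}/\mu(C_i)$, and each conditional is concentrated on the single class $C_i$. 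Equivalently, in the language following Assumption~\ref{Ass:disi_cons}, $E^{\bar A_{\bar\pi}}$ is essentially countably generated and $\bar E=E^{\bar A_{\bar\pi}}$ up to a $\mu$-negligible set.

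The substantive point is to obtain strong consistency for \emph{every} generating set, not just for the distinguished one produced above; this is the clause ruled out in the abstract setting by Example~\ref{Ex:notgoodsol}. Here I would argue as follows. By Remark~\ref{Rem:up_to_n_E} the relation $\bar E$ determined by the atoms $\{C_i\}_i$ is intrinsic to $\mathcal M^{\bar\pi}$, so any $\bar A'_{\bar\pi}$ with $\bar\pi(\bar A'_{\bar\pi})=1$ generating $\mathcal M^{\bar\pi}$ has all its classes contained in the atoms $C_i$. To see that they in fact exhaust each $C_i$ up to a $\mu$-negligible set, intersect with the distinguished generator, $\bar A''_{\bar\pi}=\bar A_{\bar\pi}\cap\bar A'_{\bar\pi}$; this has full $\bar\pi$-measure and, by \eqref{Equa:remov_1}, still generates $\mathcal M^{\bar\pi}$. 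The key input is the democratic connectivity from Assumption~\ref{Assu:local_test}, exploited as in the proof of Lemma~\ref{Lem:one_class}: since within each $C_i$ the curves of $\bar A_{\bar\pi}$ connect \emph{positive-measure to positive-measure} sets, deleting the $\bar\pi$-negligible family $\bar A_{\bar\pi}\setminus\bar A'_{\bar\pi}$ alters the equivalence classes only on a $\mu$-negligible set. Hence the classes of $E^{\bar A''_{\bar\pi}}$, and a fortiori those of $E^{\bar A'_{\bar\pi}}$, coincide with the atoms $C_i$ up to $\mu$-negligible sets, which is precisely strong consistency for $\bar A'_{\bar\pi}$.

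With Assumption~\ref{Ass:disi_cons} now verified in the present $W^{1,2}$-setting, the existence of $w$ is immediate from Theorem~\ref{Theo:w_final}: choosing any $\omega\in L^\infty(\zeta,\mathbb S^1)$ (for instance $\omega\equiv1$) produces $w\in L^\infty(\mu,\mathbb S^1)$ satisfying \eqref{Equa:equa_cic}, i.e. a solution of $Dw=iwv$. I expect the main obstacle to be the robustness step of the third paragraph---showing that strong consistency is insensitive to the choice of generating family---since this is exactly where the democratic structure of Assumption~\ref{Assu:local_test} is indispensable and where the purely measure-theoretic machinery of Section~\ref{S:constr_min_eq} alone would fail.
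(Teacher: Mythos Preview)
Your proposal is correct and follows the same line as the paper. The paper's own proof is a one-sentence observation: once Proposition~\ref{Prop:pos_measure} has produced a $\bar\pi$ whose equivalence classes have positive $\mu$-measure, the partition is automatically countable and the disintegration trivially strongly consistent; the existence of $w$ then follows from Theorem~\ref{Theo:w_final}. Your second paragraph, on robustness under change of the generating family $\bar A'_{\bar\pi}$, is not wrong but is largely re-deriving what the paper already packaged into Proposition~\ref{Prop:pos_measure} (via the phrase ``essentially invariant w.r.t.\ the choice of the sets $A_{\bar\pi}$'') and Lemma~\ref{Lem:one_class} (``by removing a negligible set of trajectories we change the equivalence class only in a $m$-negligible set''); the paper treats that as already established and records only the trivial implication here.
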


This corollary, together with Example \ref{Ex:torus_Riemann}, proves the existence part of \eqref{Equa:cicuit_dP} of Theorem \ref{Theo:gen_case}.

\begin{proof}
We just observe that the disintegration with equivalence classes of positive measure is certainly strongly consistent, corresponding to a countable partition of the ambient space up to a negligible set.
\end{proof}

Finally, we consider the case $f \in W^{1,2}(X,m)$ and $v \in L^2(T^*X)$ cotangent vector field in $(X,\mu)$. The above computations can be applied to the equivalence classes of $\mu = f^2 m$, and then by Theorem \ref{Theo:w_final} we can construct a function $w \in L^\infty(\mu,\mathbb S^1)$ such that
\begin{equation*}
w(\gamma(1)) - w(\gamma(0)) = \int_0^1 \Lambda(\gamma) \dot{|\gamma|} \mathcal L^1, \quad \pi\text{-a.e. } \gamma, \ \pi \ \text{test plan of $(X,\mu)$}.
\end{equation*}

Let now $\pi'$ be a test plan for $(X,m)$: we can assume that $\pi$ is concentrated on a family of curves $A_\pi$ such that $f \circ \gamma \in H^1([0,L(\gamma)])$ for all $\gamma \in A_\pi$. In particular, $f \circ \gamma$ is continuous and for every $n \in \N$ define
\begin{equation}
\label{Equa:f_gam_-1_inter}
\big\{ |f \circ \gamma| > 2^{-n} \big\} = \bigcup_k \big( t^-_{k,n}(\gamma),t^+_{k,n}(\gamma) \big).
\end{equation}
The next lemma assures that the dependence w.r.t. $k,\gamma$ of $t^\pm_{k,n}$ is Borel.

\begin{lemma}
\label{Lem:Borel_H_1}
There exists a family of countable maps of Borel regularity
\begin{equation*}
\gamma \mapsto t^-_{k,n}(\gamma),t^+_{k,n}(\gamma)
\end{equation*}
such that \eqref{Equa:f_gam_-1_inter} holds.
\end{lemma}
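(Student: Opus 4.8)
The plan is to enumerate the connected components of the relatively open set $U_n(\gamma) = \{t \in [0,L(\gamma)] : |f(\gamma(t))| > 2^{-n}\}$ using rational anchor points, and to show that the endpoints of the component containing a fixed rational depend in a Borel way on $\gamma$, being hitting times of a closed set for the continuous path $t \mapsto |f(\gamma(t))|$.

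First I would record the joint measurability that makes everything work. On $A_\pi$ the map $g_\gamma(t) := |f(\gamma(t))|$ is, for each $\gamma$, the continuous ($H^1$) representative of $|f\circ\gamma|$, and I claim that for each fixed $t$ the map $\gamma \mapsto g_\gamma(t)$ is Borel. This is the one genuinely delicate point, since $f \in W^{1,2}(X,m)$ is only defined up to $m$-null sets while we evaluate it along single curves. I would obtain it from the precise representative: writing $g_\gamma(t) = \lim_{h \to 0^+} \tfrac{1}{h}\int_t^{t+h} |f(\gamma(s))|\,ds$ (with one-sided averages adapted at the right endpoint), the integrand $(s,\gamma) \mapsto |f(\gamma(s))|$ is Borel on $[0,1]\times\Gamma$ because $e$ is continuous and a Borel representative of $f$ is Borel, so by Tonelli each average is Borel in $\gamma$, and on $A_\pi$ the limit exists for every $t$ and equals the continuous representative $g_\gamma(t)$. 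Since $\gamma \mapsto L(\gamma)$ is Borel, I may reparametrise each curve proportionally and treat the domain as the fixed interval $[0,1]$.

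Next, for fixed $n$ set $c = 2^{-n}$. As $g_\gamma$ is continuous, $U_n(\gamma) = \{g_\gamma > c\}$ is open in $[0,1]$, hence a countable disjoint union of open intervals; each such interval, being nonempty, contains a rational. Thus it suffices to describe, for each $r \in \Q \cap [0,1]$, the component of $U_n(\gamma)$ containing $r$ whenever $g_\gamma(r) > c$. I set
\[
t^+(r,\gamma) = \inf\{ s \in (r,1] : g_\gamma(s) \le c \}, \qquad t^-(r,\gamma) = \sup\{ s \in [0,r) : g_\gamma(s) \le c \},
\]
with the conventions $\inf\emptyset = 1$, $\sup\emptyset = 0$, both defined on the set $\{\gamma : g_\gamma(r) > c\}$. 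By continuity $(t^-(r,\gamma), t^+(r,\gamma))$ is exactly the connected component of $r$ in $U_n(\gamma)$, and letting $r$ range over $\Q\cap[0,1]$ produces every component (with repetitions, which is harmless for the union), so \eqref{Equa:f_gam_-1_inter} holds after relabelling the countable family $\{r\}$ by an index $k$.

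Finally I would verify Borel measurability of these endpoints, which is the standard fact that a hitting time of a closed set by a continuous path is Borel. For $a \in (r,1]$,
\[
\{ \gamma : t^+(r,\gamma) > a \} = \Big\{ \gamma : \min_{t \in [r,a]} g_\gamma(t) > c \Big\} = \Big\{ \gamma : \inf_{q \in \Q \cap [r,a]} g_\gamma(q) > c \Big\},
\]
where the last equality uses continuity of $g_\gamma$ together with density of the rationals; the right-hand side is a countable intersection of the Borel sets $\{\gamma : g_\gamma(q) > c\}$, hence Borel, and ranging over rational $a$ shows $\gamma \mapsto t^+(r,\gamma)$ is Borel. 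The argument for $t^-$ is symmetric, and the domain $\{\gamma : g_\gamma(r) > c\}$ is Borel by the first step. The main obstacle is therefore not the enumeration but the joint measurability of the continuous representative along the curves; once that is in place the remainder is the routine rational-anchor and hitting-time bookkeeping.
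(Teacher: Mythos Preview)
Your proof is correct and follows essentially the same route as the paper: both enumerate the connected components of $\{|f\circ\gamma|>2^{-n}\}$ via rational anchor points and obtain Borel regularity of the endpoints as countable intersections of the Borel sets $\{\gamma: g_\gamma(q)>c\}$. Your treatment of joint measurability via the averaged (precise) representative is in fact more careful than the paper's, which simply asserts continuity in $t$ of the Borel composition; the paper additionally prunes repeated intervals, a step you correctly observe is unnecessary for the statement as written.
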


\begin{proof}
Being $(t,\gamma) \mapsto e(t,\gamma) = \gamma(t)$ continuous, by selecting a Borel representative of $f$ we obtain that
\begin{equation*}
\tilde f(t,\gamma) = \big[ |f(\gamma(t))| - 2^{-n} \big]^+
\end{equation*}
is Borel, and by the choice of $A_\pi$ it is continuous w.r.t. $t$ for every $\gamma \in \Gamma$ by setting $\tilde f(t,\gamma) = 0$ if $\gamma \notin A_\pi$. We will prove the statement for the function $\tilde f$.

Let $s_\ell \in [0,1] \cap \Q$ be a countable dense sequence of times, set
\begin{equation*}
A_{\pi,k} = \{\gamma \in A_\pi: \tilde f(s_k,\gamma) > 0\} %\setminus \bigcup_{j < k} A_{\pi,j},
\end{equation*}
and define the function $\tilde t^+_k : A_{\pi,k} \to [0,1]$ as the function whose graph is
\begin{equation*}
\big\{ \gamma : \tilde t^+_k(\gamma) \geq t \big\} = \bigcap_{s_k \leq s_\ell \leq t} A_{\pi,\ell}.
\end{equation*}
In the same way the function $\tilde t^-_k : A_{\pi,k} \to [0,1]$ is given by %as the function whose graph is
\begin{equation*}
\big\{ \gamma : \tilde t^-_k(\gamma) \leq t \big\} = \bigcap_{t \leq s_\ell \leq s_k} A_{\pi,\ell}.
\end{equation*}
Begin $A_{\pi,k}$ Borel, the functions $\tilde t^\pm_k$ are Borel with domain $A_{\pi,k}$. It is clear that with this procedure we have
\begin{equation*}
\{\tilde f > 0\} = \bigcup_\gamma \bigcup_k \big( t^-_k(\gamma),t^+_k(\gamma) \big).
\end{equation*}

The only step left is to have that every interval of $\{\tilde f(\cdot,\gamma) > 0\}$ is counted once. Set $t_{1,n}^\pm(\gamma) = \tilde t_1^\pm(\gamma)$, and in general
\begin{equation*}
t^\pm_{k,n}(\gamma) = \tilde t^\pm_k(\gamma) \quad \text{if} \ k = \min \big\{ j : s_j \in (\tilde t^-_k(\gamma),\tilde t^+_k(\gamma)) \big\}.%: \gamma(
\end{equation*}
Being the last set a Borel set, the functions $t^\pm_k$ are again Borel. % and  lemma is proved.
\end{proof}

By defining
\begin{equation*}
\gamma_{k,n} = \gamma \llcorner_{(t^-_{k,n}(\gamma),t^+_{k,n}(\gamma))}, \quad \pi_{k,n} = (\gamma_{k,n})_\sharp \pi,
\end{equation*}
we have that $\pi_{k,n}$ is a test plan for the measure $\mu = f^2 m/\|f\|_2^2$, with compressibility constant $2^{2n} C/\|f\|_2^2$, $C$ being the compressibility constant of $\pi$. Along $\pi_{k,n}$-a.e. curve, by elementary 1d computations we have that
\begin{equation}
\label{Equa:chain_fw}
\frac{d}{dt} (fw)(\gamma(t)) = \frac{d}{dt}f(\gamma(t)) w(\gamma(t)) + f(\gamma(t)) \frac{d}{dt} w(\gamma(t)),
\end{equation}
so that, recalling $w \in \mathbb S^1$, for $\pi_{n,k}$-a.e. $\gamma$ we deduce
\begin{equation}
\label{Equa:fw_W12}
\bigg| \frac{d}{dt} (fw) \circ \gamma(t) \bigg| \leq |Df|(\gamma(t)) + |f(\gamma(t))| |Dw(\gamma(t))| = |Df|(\gamma(t)) + |f(\gamma(t))| |v(\gamma(t))|.
\end{equation}
The last function belongs to $L^2(X,m)$, because $f \in W^{1,2}(X,m)$ and $|Dw| \in L^2(X,\mu)$, i.e.
\begin{equation*}
\int |Dw|^2 |f|^2 \mu < \infty.
\end{equation*}
By letting $n \to \infty$ and extending $t \mapsto (fw)(\gamma(t))$ as a continuous function, we deduce that \eqref{Equa:chain_fw}, \eqref{Equa:fw_W12} holds for $\pi$-a.e. $\gamma$, and since trivially $|fw| = |f| \in L^2(X,m)$ we conclude with the following proposition.

%We compute the derivative of the function $(fw) \circ \gamma$: being $w \in L^\infty$ it holds %has derivative
%\begin{equation*}
%\begin{split}
%\int \phi'(t) (fw)(\gamma(t)) \mathcal L^1 &= \int (\phi(t) f(\gamma(t))' w(\gamma(t)) \mathcal L^1 - \int \phi(t) (f(\gamma(t))' w(\gamma(t)) \mathcal L^1 \\
%&= - \int \phi(t) \big( f(\gamma(t)) \Lambda(\gamma)(t) + df(\gamma)(t) w \big) \mathcal L^1,
%\end{split}
%\end{equation*}
%where we have used that $w$ is differentiable in each segment where $f > 0$, and we also assume that its derivative $\Lambda$ satisfies $f \Lambda \in L^2(m)$. Since $f w \in L^2(m)$, we conclude with the following proposition.

\begin{proposition}
\label{Prop}
Under the assumptions $f \in W^{1,2}(X,m)$, $v \in L^2(X,|f|^2 m)$, there exists a solution $w$ such that $f w \in W^{1,2}(X,m)$ and
\begin{equation}
\label{Equa:Dfw_expl}
D(fw) = wDf + f Dw.
\end{equation}
Moreover any other solution $w'$ satisfies
\begin{equation*}
\frac{w(x)}{w'(x)} = \omega(Ex), %\quad y
\end{equation*}
where $E$ is the equivalence relation constructed in Section \ref{S:constr_min_eq} in the space $(X,\mu)$ with $\mu = f^2m/\|f\|_2^2$.
%is obtained by multiplying $w$ with a Borel function $\omega$ on the quotient space of $(|f| \wedge 1) m$.
\end{proposition}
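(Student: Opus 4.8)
The plan is to assemble the conclusion from Theorem~\ref{Theo:w_final} and the localization machinery of Lemma~\ref{Lem:Borel_H_1}, the only genuinely new content being the $W^{1,2}(X,m)$-regularity of $fw$. First I would produce the candidate $w$. Corollary~\ref{Cor:disint}—which rests on Proposition~\ref{Prop:pos_measure} and the fact that the equivalence classes of $E^{\bar A_{\bar\pi}}$ have positive $\mu$-measure—guarantees that the disintegration of $\mu=|f|^2m/\|f\|_2^2$ is strongly consistent, so Assumption~\ref{Ass:disi_cons} holds. Theorem~\ref{Theo:w_final} then delivers $w\in L^\infty(\mu,\mathbb S^1)$ satisfying \eqref{Equa:equa_cic} along every test plan $\pi$ of $(X,\mu)$. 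What remains is that this $w$, built from plans for $\mu$, yields Sobolev regularity of $fw$ tested against plans for the \emph{ambient} measure $m$.

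The central difficulty is exactly this mismatch: a test plan $\pi'$ for $m$ need not be a test plan for $\mu$, since where $|f|$ is small (or vanishes) $\mu$ is much smaller than $m$ and imposes no constraint. To bridge the gap I would localize to the superlevel sets of $|f|$. Fix a test plan $\pi'$ for $(X,m)$ concentrated on curves with $f\circ\gamma\in H^1([0,L(\gamma)])$, which is admissible because $f\in W^{1,2}(X,m)$. By Lemma~\ref{Lem:Borel_H_1} write $\{|f\circ\gamma|>2^{-n}\}=\bigcup_k(t^-_{k,n}(\gamma),t^+_{k,n}(\gamma))$ with Borel endpoints, and form the restricted plans $\pi_{k,n}=(\gamma_{k,n})_\sharp\pi'$, with $\gamma_{k,n}=\gamma\llcorner_{(t^-_{k,n},t^+_{k,n})}$. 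On these sub-curves $|f|>2^{-n}$, hence $\mu> 2^{-2n}m/\|f\|_2^2$ along them, so $\pi_{k,n}$ is a genuine test plan for $\mu$ with compressibility constant $2^{2n}C/\|f\|_2^2$. Consequently $w$ satisfies \eqref{Equa:equa_cic} along $\pi_{k,n}$-a.e.\ curve, and the one-dimensional chain rule \eqref{Equa:chain_fw} applies there.

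From this point the estimate \eqref{Equa:fw_W12}, namely $\bigl|\tfrac{d}{dt}(fw)\circ\gamma\bigr|\leq |Df|(\gamma(t))+|f(\gamma(t))|\,|v(\gamma(t))|$, dominates the derivative by a fixed $L^2(X,m)$-function: $f\in W^{1,2}(X,m)$ controls $|Df|$, while $\int|Dw|^2|f|^2\,\mu<\infty$ together with $|w|=1$ gives $|f|\,|v|\in L^2(m)$. Letting $n\to\infty$ and extending $t\mapsto(fw)(\gamma(t))$ continuously across $\{f=0\}$—where $fw$ vanishes because $|fw|=|f|$—I would conclude that \eqref{Equa:chain_fw} and \eqref{Equa:fw_W12} hold for $\pi'$-a.e.\ $\gamma$. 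As $\pi'$ was an arbitrary test plan for $m$ and $|fw|=|f|\in L^2(X,m)$, this yields $fw\in W^{1,2}(X,m)$ with $D(fw)=wDf+fDw$, i.e.\ \eqref{Equa:Dfw_expl}. The uniqueness claim then follows directly from the second half of Theorem~\ref{Theo:w_final}: any competitor $w'$ with $Dw'=iw'v$ satisfies \eqref{Equa:equa_cic} for plans of $(X,\mu)$, so $w/\check w$ and $w'/\check w$ are each constant on every class $\bar E_y$, whence their ratio $w/w'=\omega(Ex)$ depends only on the class, with $\omega\in L^\infty(\zeta,\mathbb S^1)$. The main obstacle throughout is the measure-mismatch, resolved by this localize-then-pass-to-the-limit device; the rest is bookkeeping with results already established.
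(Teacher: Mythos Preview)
Your proposal is correct and follows essentially the same approach as the paper: the paper carries out precisely the localize-to-$\{|f|>2^{-n}\}$-and-pass-to-the-limit argument via Lemma~\ref{Lem:Borel_H_1}, the plans $\pi_{k,n}$, and formulas \eqref{Equa:chain_fw}--\eqref{Equa:fw_W12} in the paragraphs immediately preceding the proposition, and then invokes Theorem~\ref{Theo:w_final} for both existence and the uniqueness clause. Your write-up is simply a more explicit repackaging of that same chain of reasoning.
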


In particular there are at most $(\mathbb S^1)^\N$-different solutions. Together with Proposition \ref{Prop:pos_measure}, the above statement completes the proof of Theorem \ref{Theo:gen_case}.

\begin{proof}
Because of Theorem \ref{Theo:w_final}, we are left to prove \eqref{Equa:Dfw_expl}: this follows from \eqref{Equa:chain_fw} and the observation that every cotangent vector field is uniquely defined by its behavior on test plans as in \eqref{Equa:simple_plan_ct}.
\end{proof}

\newpage 
\appendix
\printglossaries

\newpage 
\bibliographystyle{alpha}
\bibliography{biblio}

\end{document}